\theoremstyle{plain}
\newtheorem{theorem}{Theorem}[section]
\newtheorem{remark}{Remark}[section]
\newtheorem*{theorem*}{Theorem}
\newtheorem{corollary}{Corollary}[section]
\newtheorem*{corollary*}{Corollary}
\newtheorem{lemma}{Lemma}[section]
\newtheorem*{claim*}{Claim}
\newtheorem*{q*}{Question}
\newcommand{\p}{\partial}
\newcommand{\real}{\mathbb{R}}
\newcommand{\R}{\mathbb{R}}
\newcommand{\vf}{\Gamma(TM)}
\newcommand{\na}{\nabla}
\newcommand{\rmm}{\mathbb{R}^m}
\newcommand{\rnk}{\mathbb{R}^{n+k}}
\newcommand{\ptens}{\bigwedge^q T^*M}
\newcommand{\pform}{\Omega^q(M)}
\newcommand{\vfn}{\Gamma(TM^\perp)}
\newcommand{\nap}{\nabla^\perp}
\newcommand{\bep}{B^\epsilon}
\newcommand{\nep}{\nabla^{\perp,\epsilon}}
\newcommand{\harmp}{\text{\rm Har}^q(M)}
\newcommand{\lie}{\mathcal{L}}
\newcommand{\vb}{V^{(B)}}
\newcommand{\vnab}{V^{(\nabla^\perp)}}
\newcommand{\ob}{\Omega^{(B)}}
\newcommand{\onab}{\Omega^{(\nabla^\perp)}}
\newcommand{\vbe}{V^{(B^\epsilon)}}
\newcommand{\vnabe}{V^{(\nabla^{\perp,\epsilon})}}
\newcommand{\obe}{\Omega^{(B^\epsilon)}}
\newcommand{\onabe}{\Omega^{(\nabla^{\perp,\epsilon})}}
\newcommand{\dis}{\mathcal{D}'}
\newcommand{\woneploc}{W^{1,p}_{\rm loc}}
\newcommand{\wtwoploc}{W^{2,p}_{\rm loc}}
\newcommand{\lploc}{L^p_{\rm loc}}
\newcommand{\hil}{\mathcal{H}}
\newcommand{\sdag}{S^\dagger}
\newcommand{\tdag}{T^\dagger}
\newcommand{\st}{S\oplus T}
\newcommand{\stdag}{\sdag\vee\tdag}
\newcommand{\useq}{\{u^\epsilon\}}
\newcommand{\vseq}{\{v^\epsilon\}}
\newcommand{\ub}{\bar{u}}
\newcommand{\vbb}{\bar{v}}
\newcommand{\ran}{\text{\rm ran}}
\newcommand{\antil}{\tilde{a}^\epsilon}
\newcommand{\bntil}{\tilde{b}^\epsilon}
\newcommand{\yz}{Y\bigoplus Z}
\newcommand{\yzstar}{Y^*\bigoplus Z^*}
\newcommand{\sd}{\slashed{\Delta}}
\newcommand{\htt}{\utilde{\hil}}
\newcommand{\e}{\epsilon}
\newcommand{\dd}{{\rm d}}
\numberwithin{equation}{section}
\numberwithin{figure}{section}
\title{Global Weak Rigidity of the Gauss-Codazzi-Ricci\\ Equations and Isometric Immersions
\\of Riemannian Manifolds with Lower Regularity}
\author{Gui-Qiang G. Chen}
\address{Gui-Qiang G. Chen: Mathematical Institute,\
 University of Oxford, Oxford, OX2 6GG, UK}
\email{\texttt{chengq@maths.ox.ac.uk}}
\author{Siran Li}
\address{Siran Li: Mathematical Institute, University of Oxford, Oxford, OX2 6GG, UK}
\email{\texttt{siran.li@rice.edu}}
\keywords{Weak rigidity, global, intrinsic, Gauss-Codazzi-Ricci equations,
Riemannian manifolds, isometric immersion,
isometric embedding, lower regularity, weak convergence,
approximate solutions, geometric div-curl lemma,
div-curl structure, Cartan formalism, Riemann curvature tensor}
\subjclass[2010]{Primary:
53C24, 53C42, 53C21, 53C45, 57R42, 35M30, 35B35, 58A15, 58J10;
Secondary: 57R40, 58A14, 58A17, 58A05, 58K30, 58Z05}
\date{\today}
\begin{document}

\maketitle

\begin{abstract}
We are concerned with the global weak rigidity of
the Gauss-Codazzi-Ricci (GCR) equations
on Riemannian manifolds and the corresponding isometric
immersions of Riemannian manifolds into
the Euclidean spaces.
We develop a unified intrinsic approach
to establish the global weak rigidity of both the
GCR equations
and isometric
immersions of the Riemannian manifolds,
independent of the local coordinates,
and provide further insights of the previous local
results and arguments.
The critical case has also been analyzed.
To achieve this, we first reformulate the GCR equations
with div-curl structure intrinsically
on Riemannian manifolds and
develop a global, intrinsic version
of the div-curl lemma and other nonlinear techniques to tackle
the global weak rigidity on manifolds.
In particular, a general functional-analytic compensated compactness
theorem on Banach spaces has been established,
which includes the
intrinsic div-curl lemma on Riemannian manifolds
as a special case.
The equivalence of global isometric immersions, the Cartan formalism,
and the GCR equations
on the Riemannian manifolds with lower regularity
is established.
We also prove a new weak rigidity result along the way,
pertaining to the Cartan formalism, for Riemannian manifolds
with lower regularity, and extend the weak rigidity results
for Riemannian manifolds with corresponding different metrics.
\end{abstract}

\section{Introduction}
We are concerned with the global weak rigidity of
the Gauss-Codazzi-Ricci (GCR) equations
on Riemannian manifolds and the corresponding global weak rigidity of isometric
immersions of the Riemannian manifolds into the Euclidean spaces.
The problem of isometric immersions
of Riemannian manifolds into the Euclidean spaces
has been of considerable interest in the development of differential geometry,
which has also led to important developments
of new ideas and methods in nonlinear analysis
and partial differential equations (PDEs)
({\it cf.} \cite{HanHon06,Nas54,Nas56,Nir53,Yau}).
On the other hand,
the GCR equations are a fundamental system of nonlinear PDEs
in differential geometry
({\it cf.} \cite{BGY,BS,Eisenhart,Goenner,Greene,NM,Spi79}).
In particular, the GCR equations serve as the compatibility
conditions to ensure the existence of
isometric immersions.
Therefore, it is important to
understand the global, intrinsic behavior
of this nonlinear system on Riemannian manifolds
for solving the isometric immersion problems
and other important geometric problems,
including the global weak rigidity
of the GCR equations and isometric immersions.
In general, the Gauss-Codazzi-Ricci
system has no type, neither purely hyperbolic nor purely elliptic.
	
The weak rigidity problem for isometric immersions is to decide,
for a given sequence of isometric immersions of an $n$-dimensional manifold with a $W^{1,p}$ metric
whose  second fundamental forms and normal connections are uniformly bounded
in $L^p_{\rm loc}, p>n$,
whether its weak limit is still an isometric immersion with
the same $W^{1,p}_{\rm loc}$ metric.
This rigidity problem has its motivation both from geometric analysis
and nonlinear elasticity:
The existence of isometric immersions of Riemannian manifolds with lower regularity
corresponds naturally to the realization of elastic bodies with lower regularity
in the physical space.
See Ciarlet-Gratie-Mardare \cite{Cia08}, Mardare \cite{Mar03}, Szopos \cite{Szo08},
and the references cited therein.
	
The local weak rigidity of the GCR equations with lower regularity
has been analyzed in Chen-Slemrod-Wang \cite{CSW10-CMP,CSW10},
in which the div-curl structure of the GCR equations in local coordinates
has first been observed so that compensated compactness ideas,
especially the div-curl lemma ({\it cf.} Murat \cite{Mur78} and Tartar \cite{Tar79}),
can be employed in the local coordinates.
One of the advantages of these techniques is their independence
of the type of PDEs -- hence independent of the sign of curvatures
in the setting of isometric immersions of the Riemannian manifolds.
The key results in \cite{CSW10-CMP,CSW10}
are the weak rigidity of solutions to
the GCR equations,
which is known to be equivalent to the existence of local isometric immersions
in the $C^\infty$ category as a classical result ({\it cf.} \cite{HanHon06,Ten71}).
However,
such an equivalence for Riemannian manifolds with
lower regularity ({\it i.e.}, $W^{1,p}_{\rm loc}$ metric) is not a direct consequence
of the aforementioned classical results.
This problem has been treated recently in \cite{Cia08,Mar03,Mar05,Mar07}
from the point of view of nonlinear elasticity.
	
In this paper, we analyze the global weak rigidity of both the GCR equations
and isometric immersions -- via a new approach, independent of local coordinates.
Instead of writing the GCR equations in the local coordinates,
we formulate the GCR equations {\it intrinsically}
on Riemannian manifolds
and develop a global, intrinsic version
of compensated compactness and other nonlinear techniques to tackle
the global weak rigidity on manifolds.
Our aim is to develop a unified intrinsic approach
to establish the global weak rigidity of the GCR equations
and isometric immersions,
and provide further insights of the results and arguments
in
\cite{CSW10,Mar03,Mar05,Mar07,Szo08}
and the references cited therein.
We also establish a new weak rigidity result along the way,
pertaining to the Cartan formalism, for Riemannian
manifolds with lower regularity,
and extend the weak rigidity results for
Riemannian manifolds with corresponding different (unfixed) metrics.

This paper is organized as follows:	
In \S 2, we start with some geometric notations and present
some basic facts about isometric immersions and the GCR equations on Riemannian
manifolds for subsequent developments.
In \S 3, we first formulate and prove a general abstract
compensated compactness theorem on Banach spaces
in the framework of functional analysis.
As a direct corollary, we obtain
a global, intrinsic version of the div-curl lemma,
which has been also further extended to a more general version.
These serve as a basic tool for subsequent sections.
In \S 4, we give a geometric proof for the global weak rigidity of
the GCR equations on Riemannian manifolds.
The formulation and proof in this section are independent of
the local coordinates of the manifolds, and are based on
the geometric div-curl structure of the GCR equations.
In \S 5, the equivalence of global isometric immersions,
the Cartan formalism, and the GCR equations for
simply-connected $n$-dimensional manifolds
with $W^{1,p}_{\rm loc}$ metric, $p>n$, is established.
Then, in \S 6, we analyze the weak rigidity for the
critical case $n=2$ and $p=2$.
In particular, we show the weak rigidity of the GCR equations
when the co-dimension is $1$.
Finally, in \S 7, we first provide a proof of the weak rigidity
of the Cartan formalism, which gives an alternative
intrinsic proof of the main result in \S 4,
and then extend the weak rigidity results to the more general case
such that the underlying metrics of manifolds are allowed
to be a strongly convergent sequence in $W^{1, p}, p>n$.
To keep the paper self-contained, in the appendix, we provide a proof
for a general version of the intrinsic div-curl lemma,
Theorem \ref{thm_general goemetric divcurl},
on Riemannian manifolds.

\section{Geometric Notations, Isometric Immersions and the GCR Equations}\label{Section 2}

In this section, we start with some geometric notations about manifolds and vector bundles
for self-containedness,
and then present
some basic facts about isometric immersions
and the GCR equations on Riemannian manifolds for subsequent developments.

\allowdisplaybreaks			
\subsection{Notations: Manifolds and Vector Bundles}

Throughout this paper, we denote $(M,g)$ as an $n$-dimensional Riemannian manifold.
By definition, $M$ is a second-countable, Hausdorff topological space
with an atlas of local
charts $\mathcal{A}=\{(U_\alpha, \phi_\alpha)\,: \,\alpha\in\mathcal{I}\}$
such that each $U_\alpha \subset M$ is an open subset,
$\phi_\alpha : U_\alpha \mapsto
\phi_\alpha(U_\alpha) \subset \real^n$
is a homeomorphism, and the transition functions $\phi_\alpha \circ \phi_\beta^{-1}$
between the overlapping charts $U_\alpha$ and $U_\beta$ have required regularity.

If each $\phi_\alpha \circ \phi_\beta^{-1}$
can be chosen to have positive definite Jacobian determinant almost everywhere,
then $M$ is said to be orientable.
Moreover, $M$ is simply-connected if its fundamental group is trivial,
{\it i.e.}, each loop on $M$ can be continuously deformed to a point.
The Riemannian metric $g$ on $M$ is given by a field of inner products.
That is, at each point $P\in M$, $g(P): \vf\times\vf \mapsto\real$
is an inner product
denoted by
$$
g(P)(X,Y)=\langle X, Y\rangle \qquad\,\, \mbox{for any}\,\,\ X,Y\in \vf,
$$
where $g$ varies
with respect to $P$,
$\vf$ denotes the space of
vector fields on $M$
with required regularity,
and we have suppressed the dependence on $P$ when using
$\langle \cdot, \cdot \rangle$ to denote the inner product
associated with the metric.
Throughout this paper, we always denote by $\Gamma$ the space of sections of
given vector bundles with required Sobolev regularity,
which needs not be smooth or analytic, in order to be more suitable
for the applications to the realization problem ({\em cf.} \S 5).

Given $(M,g)$, an affine connection on $M$ (more precisely,
on $TM$) is a
bilinear map
$\na: \vf \times \vf \mapsto \vf$ such that,
for any $f: M\mapsto \R$ with required regularity, and any $X,Y,Z\in\vf$,
$$
\na_{fX} Y = f\na_X Y, \qquad \na_X (fY) = f\na_XY + X(f)Y,
$$
where $X(f)$ is the directional derivative of $f$ in the direction of $X$;
that is, vector $X$ is identified as the corresponding first-order differential operator.

We say that $\na$ is compatible with metric $g$ if
$$
Xg(Y,Z)=g(\na_XY, Z) + g(Y,\na_XZ),
$$
and $\na$ is torsion-free if
$$
\na_XY-\na_YX =[X,Y],
$$
where the Lie bracket is defined by $[X,Y]=XY-YX$.
There exists a unique compatible, torsion-free affine connection $\na$ on $M$,
known as the Levi-Civita connection, where the bilinear
map $\na$ is also called the covariant derivative. 	
As a basic example, consider $(\bar{M},\bar{g})=(\rmm, g_0)$ with the Euclidean metric $g_0=\delta_{ij}$,
{\it i.e.}, the dot product, whose Levi-Civita connection $\bar{\na}$ is given by $\bar{\na}_XY := XY$.

Given a manifold $M$, we say that $(E,M,F)$ is a vector bundle of degree $k\in\mathbb{N}$ over $M$
if there is a
surjection $\pi : E \mapsto M$ such that,
for any $P \in M$, there exists a local neighbourhood $U \subset M$ containing $P$
so that there is a
diffeomorphism $\psi_U:\pi^{-1}(U) \mapsto U \times F$
with ${\rm pr}_1\circ\psi_U = \pi$ on $\pi^{-1}(U)$,
where map ${\rm pr}_1$ is the projection map onto the first coordinate,
$E$ and $F$ are also differentiable manifolds, and $F \simeq \real^k$ ((vector space isomorphism).
In this bundle, $E$ is called the total space, $F$ is the fibre, $M$ is the base manifold,
$\pi$ is the projection of the bundle, and $\psi_U$ is termed as a {\it local trivialization}.
For simplicity, we also say that {\it $E$ is a vector bundle over $M$}.
	
If $E_1$ and $E_2$ are both vector bundles over $M$, we can take the direct sum and the quotient
of the bundles, by taking the vector space direct sum and quotient of the fibres.
Also, for a vector bundle $(E,M,F=\real^k)$ with projection $\pi$,
the space of smooth sections is defined
by $\Gamma(E):=\{s\in C^\infty(M;E)\,: \,\pi\circ s={\rm id}_M\}$.
We can define the affine connection $\na^E: \vf \times \Gamma(E) \mapsto \Gamma(E)$,
by linearity and the Leibniz rule.
For our purpose, $\na^E$ is required to restrict to the Levi-Civita connection
on $M$.
	
As a primary example, consider $E=TM$, the tangent bundle over $M$.
Then $\pi$ is the projection onto the base point in $M$, and $\Gamma(TM)$ is the space of smooth
vector fields, agreeing with the previous notation.
Moreover, $\na^{TM}$ is precisely the Levi-Civita connection.
Another example is the cotangent bundle $T^*M$ over $M$,
whose fibres are the dual vector spaces of the fibres of $TM$.
	
Our next example is crucial to this paper.
Consider $E_1=T^*M\otimes T^*M\otimes \cdots \otimes T^*M$, the tensor product
of $q$--copies of $T^*M$, for $q=0,1,2,\ldots$.
This is the (covariant) $q$--tensor algebra over $M$, which
can be viewed as $q$-linear maps on $TM$.
Now let $E_2\subset E_1$ be the subspace of all the alternating $q$--tensors on $TM$, {\it i.e.},
the $q$-linear maps that change sign when switching any pair of its
indices $\{i,j\}\subset\{1,\ldots,q\}$.
By convention, we write $E_2=:\ptens$, known as the alternating $q$--algebra.
Moreover, the sections are $\Omega^q(M):=\Gamma(\ptens)$,
known as the differential $q$--forms.
A general element $\alpha \in \pform$ is written as a linear combination of alternating
forms $\xi_1 \wedge \cdots \wedge \xi_q$,
where $\{\xi_j\}_{j=1}^q$ is a $q$-tuple of linearly independent differential
$1$--form on $M$.
If $\dim(M)=n$,
$\pform = \{0\}$ for $q \geq n+1$.
Thus, we always restrict to $0 \leq q \leq n$.
	
On the space of differential forms, we recall four important operations.
The first is the exterior derivative $d:\pform \mapsto \Omega^{q+1}(M)$,
which is linear and satisfies $d^2=0$.
The second is the Hodge star $\ast: \ptens \mapsto \bigwedge^{n-q}T^*M$,
which can also be regarded as $\ast: \pform \mapsto \Omega^{n-q}(M)$.
It is an isomorphism of vector bundles, which satisfies $\ast\ast=(-1)^{q(n-q)}$
whenever $M$ is orientable.
The third is a natural {\it product} on differential forms:
For $\alpha \in \pform$ and $\beta \in \Omega^r(M)$, we can define the
wedge product $\alpha \wedge \beta \in \Omega^{q+r}(M)$.
The fourth is the covariant derivative $\na: \Omega^q(M) \mapsto \Omega^{q+1}(M)$:
For $\alpha \in\Omega^q(M)$, define $\nabla\alpha \in \Omega^{q+1}(M)$ via the Leibniz rule:
\begin{align}\label{eqn_first equation}
	&(\nabla\alpha)(X, Y_1, \ldots, Y_q) \equiv \na_X\alpha (Y_1, \ldots, Y_q) \nonumber\\
	&\quad :=X \big(\alpha(Y_1, \ldots, Y_q)\big) - \alpha(\nabla_X Y_1, \ldots, Y_q)
     - \cdots - \alpha (Y_1, \ldots, \nabla_X Y_q)
\end{align}
for any $X, Y_1, \ldots, Y_q \in \Gamma(TM)$.

There is a natural isomorphism between $TM$ and $T^*M$, by identifying canonically
each fibre of $TM$ with its dual.
It induces a canonical isomorphism $\sharp: \Omega^1(M) \mapsto \Gamma(TM)$,
for which we write $\sharp^{-1}=:\flat$.
Note that, if a vector field and its corresponding $1$--form in local coordinates
are written by the Einstein summation convention,
$\sharp$ (or $\flat$) amounts to raising (or lowering)
the indices of the coefficients.
Clearly, they extend to the isomorphisms between $T^*M \otimes \cdots \otimes T^*M$
({\it i.e.}, covariant tensors) and $TM \otimes \cdots \otimes TM$ ({\it i.e.}, contravariant tensors).

For instance, consider the covariant derivative $\na: \pform \mapsto \Omega^{q+1}(M)$ defined above.
For $q=1$ and $\alpha \in \Omega^1(M)$, we set $X:=\alpha^\sharp \in \vf$.
Since $\na_YX \in \vf$ for any $Y \in\vf$, we can view $\na X = \na\alpha^\sharp \in \Gamma(TM \otimes TM)$,
{\it i.e.}, $(\na \alpha^\sharp)^\flat \in \Omega^2(M)$.
This example shows that, via the identifications $\sharp$ and $\flat$,
the covariant derivative $\na$ on $\pform$ generalizes
the definition of the Levi-Civita connection.

Now let us briefly review how differential $n$--forms can be integrated on $n$-dimensional
manifolds.
On any orientable $n$-dimensional Riemannian manifold $(M,g)$,
there is a natural $n$--form $dV_g \in \Omega^n(M)$,
called the volume form, which satisfies $\ast 1=dV_g$ and $\ast dV_g = 1$.
Let $\mathcal{A}=\{(U_\alpha,\phi_\alpha): \alpha\in\mathcal{I}\}$ be an atlas for $M$ as before.
By the basic manifold theory, there exists a locally finite $C^\infty$--partition $\{\rho_\alpha: \alpha\in\mathcal{I}\}$
of unity  subordinate to $\mathcal{A}$,
{\it i.e.}, $\sum_{\alpha \in \mathcal{I}} \rho_\alpha =1$,
$0 \leq \rho_\alpha \leq 1$, and $supp (\rho_\alpha) \Subset U_\alpha$.
We define the integration of $\omega\in\Omega^n(M)$ over $M$ by
\begin{equation}
\int_M\omega:=\sum_{\alpha\in\mathcal{I}} \int_{\mathbb{R}^n}
\rho_\alpha((\phi_\alpha^{-1})^\ast\omega) \chi_{\phi_\alpha(U_\alpha)} \sqrt{|g|}\, {\rm d}x_1\cdots {\rm d}x_n,
\end{equation}
where $\phi^\ast$ denotes the pullback of a tensor under $\phi$ with required regularity on $M$,
and $|g| := \det(g_{ij})$.
The integration of function $\phi$ on $M$  is defined
as the integration of its Hodge dual, {\it i.e.}, $\int_M \phi := \int_M\phi\, {\rm d}V_g$.

We now define the Sobolev spaces $W^{k,p}(M; \bigwedge^q T^*M)$ on an $n$-dimensional Riemannian manifold $(M,g)$,
which generalize $W^{k,p}(\R^n)$ for $k\in \mathbb{Z}$ and $1\leq p \leq \infty$.
First, for differential $q$--forms $\alpha,\beta \in \Omega^q(M)$,
$g$ on $M$ defines an inner product $g(\alpha,\beta) =\langle\alpha,\beta\rangle$ by
\begin{equation}
\langle \alpha,\beta\rangle \, dV_g  := \alpha \wedge \ast \beta,
\end{equation}
which is an equality of $n$--forms on $M$.
Then, for alternating contravariant $q$--tensor fields $T$ and $S$,
set $\langle T,S\rangle:= \langle T^\flat, S^\flat \rangle$,
which is consistent with the previous notations.
Next, the $L^p$--norm of $\alpha \in \Omega^q(M)$ is defined as
\begin{equation}
\|\alpha\|_{L^p} := \Big(\int_M \big[\ast (\alpha \wedge \ast \alpha )\big]^{\frac{p}{2}}
\,{\rm d}V_g\Big)^{\frac{1}{p}} = \Big(\int_M \langle\alpha,\alpha\rangle^{\frac{p}{2}}\,{\rm d}V_g\Big)^{\frac{1}{p}}.
\end{equation}
Moreover, for $\alpha \in \Omega^q(M)$, set
\begin{equation}
\|\alpha\|_{W^{k,p}}:=\Big(\sum_{j=0}^k \|\na^j \alpha\|^p_{L^p} \Big)^{\frac{1}{p}}
= \Big(\sum_{j=0}^k \|\underbrace{\na\circ\cdots\circ\na}_{j \text{ times}} \alpha\|^p_{L^p} \Big)^{\frac{1}{p}}.
\end{equation}
We denote by $W^{k,p}(M; \bigwedge^qT^*M)$ the completion of the space of compactly supported $q$--forms
with respect to $\|\cdot\|_{W^{k,p}}$. Notice that, for any contravariant tensor field $X$, the following holds:
\begin{equation*}
\|X\|_{W^{k,p}}:=\|X^\flat\|_{W^{k,p}}.
\end{equation*}
In fact, $W^{k,p}(M;E)$ can be defined for an arbitrary bundle $E$.
Furthermore, we can define the $W^{k,p}$ connection $\na^E$ on bundle $E$.
This can be done since the moduli space of connections is an affine space modelled over
the tensor algebra $\bigotimes^\bullet TM \otimes \bigotimes^\bullet T^*M \otimes E$.
We refer the reader to Jost \cite{Jos08} for the detailed construction.
A key feature for this definition lies in its intrinsic nature,
since the local coordinates on $M$ are not needed to define $W^{k,p}(M;\ptens)$.
In particular, when $p=2$, we denote  $H^{k}(M;\ptens):=W^{k,2}(M;\ptens)$ which are
Hilbert spaces.

\allowdisplaybreaks	
\subsection{Isometric Immersions}
We are concerned with the isometric immersions
of an $n$-dimensional manifold $(M,g)$ into the Euclidean spaces.
A map $f: (M,g)\hookrightarrow (\rnk, g_0)$ is an isometric immersion
if the differential $df: TM \rightarrow T\rnk$ is everywhere injective, and
\begin{equation} \label{eqn_isom immersion}
g_0(f(P))(df_P(X),df_P(Y)) = g(P) (X,Y)
\end{equation}
for every $P \in M$ and $X,Y\in\vf$.
Since $g_0$ is the Euclidean dot product,
Eq. \eqref{eqn_isom immersion} reads
\begin{equation}\label{eqn_reduced isom immersion}
\langle df_P(X), df_P(Y)\rangle = df_P(X) \cdot df_P(Y) = \langle X, Y \rangle.
\end{equation}
	
Then it is natural to ask
whether, for any given $(M,g)$, there is an isometric immersion $f$ (or embedding,
{\it i.e.}, in addition, $f$ is injective everywhere) into the Euclidean space $\rnk$.
The existence of smooth isometric embeddings was established in Nash \cite{Nas56}
in the large when the dimension of the target Euclidean space is high enough:
For any $(M,g)$, there exist a large enough $k$ and a corresponding
smooth isometric embedding $f: (M,g) \hookrightarrow (\rnk, g_0)$
(also see  \cite{Gunther89}).
To achieve this, the problem was approached directly from
Eq. \eqref{eqn_reduced isom immersion},
which is a first-order system of fully nonlinear PDEs,
generically under-determined when $k$ is large.

On the other hand, some progress has been made on the existence and
regularity of immersions/embeddings of two-dimensional Riemannian manifolds
$M^2$ into $\real^3$, with the minimal target dimension $3$, the Janet dimension.
In this setting, the problem can be reduced to a
fully nonlinear Monge-Amp\`{e}re equation,
whose type is determined by the Gauss curvature $K$ of $M$.
For $K>0, K=0$, and $K<0$, the corresponding equation is elliptic, parabolic,
and hyperbolic, respectively.
The case of $K>0$ has a solution in the large due to Nirenberg \cite{Nir53},
while the other two cases are more delicate, and are still widely open in the general setting;
see Han-Hong \cite{HanHon06}.

\subsection{The Gauss-Codazzi-Ricci Equations}

The isometric immersion problem can also be approached via the GCR equations
as the compatibility conditions,
instead of directly tackling Eq. \eqref{eqn_reduced isom immersion}
({\it cf.} do Carmo \cite{DoC92}).
	
The GCR equations are derived from the orthogonal splitting of the tangent
spaces along the isometric immersion $f: (M,g) \hookrightarrow (\rnk, g_0)$.
Indeed, the tangent spaces satisfy $T_P\rnk \cong \rnk \cong T_PM \bigoplus T_PM^\perp$
for each $P \in M$,
where $T_PM^\perp$ is the fibre of the {\em normal bundle} $TM^\perp$ at point $P$,
and $TM^\perp$ is defined as the quotient vector bundle $T\rnk/TM$.
Here and hereafter, we obey the widely adopted convention
to identify $TM$ with $T(fM)$, that is, we view $f$ as the inclusion map.

From now on, we will always use Latin letters $X,Y,Z,\ldots$ to denote tangential vector fields,
{\it i.e.}, elements in $\vf$,
and Greek letters $\xi,\eta,\zeta,\ldots$ to denote normal vector fields,
{\it i.e.}, elements in $\vfn$.

The Levi-Civita connection $\bar{\na}$ on $\rnk$ is the trivial flat connection given by
\begin{equation}
\bar{\na}_V W:= VW = V^i(\p_i W^j)\p_j \qquad\,\, \text{ for } V = V^i\p_i, W=W^i\p_i \in \Gamma(T\rnk),
\end{equation}
where we have used the Einstein summation convention.
In other words, the covariant derivative corresponding to $\bar{\na}$ is just the usual derivative in Euclidean spaces.
It is crucial for the study of isometric immersions
that the projection of $\bar{\na}$ onto $TM$ coincides with the Levi-Civita connection on $M$
which is denoted by $\na$ in the sequel,
and its projection onto $TM^\perp$ defines an affine connection on the normal bundle which is
written as $\na^\perp$; see \S 6 in \cite{DoC92}.
Throughout this paper, we use notation $\bar{\na}_V W$ instead of $VW$
to emphasize that $\{\na, \na^\perp\}$ both come from the orthogonal splitting of $\bar{\na}$.
We also adopt the convention that the geometric quantities with an overhead bar are associated
with the {\em total space} $\rnk$ (as introduced in \S 1),
while the quantities with superscript $\perp$ are associated with the normal bundle $TM^\perp$.
Also, for any $X,Y\in\vf$, the vector field $\bar{\na}_XY \in T\rnk$ is well-defined.
Thus, we can define a symmetric bilinear form $B: \vf \times \vf \mapsto \Gamma(TM^\perp) \nonumber$,
known as the second fundamental form, by
\begin{equation}
B(X,Y) := \bar{\na}_XY - \na_XY.
\end{equation}
By a slightly abusive notation,
we may view $B: \vf \times \vf \times \vfn \mapsto \real$
as
$$
B(X,Y,\xi):= \langle B(X,Y),\xi\rangle \qquad\,\,\,\, \mbox{for $X,Y\in\vf$ and $\xi \in \vfn$}.
$$

Furthermore,
for $\xi \in \vfn$, define $S_\xi: \vf \mapsto \vf$, the shape operator (sometimes also called the second fundamental form), by
\begin{equation} \label{eqn_shape operator}
S_\xi X:= -\bar{\na}_X\xi + \nap_X \xi.
\end{equation}
Equivalently, it can be defined by contracting $B$:
\begin{equation}\label{eqn_shape operator and B}
B(X,Y,\xi) =: \langle S_\xi X, Y\rangle.
\end{equation}

In addition, the Riemann curvature tensor on $M$
is a rank--$4$ tensor $R: \vf \times \vf \times \vf \times \vf \mapsto \real$, defined by
\begin{equation}
R(X,Y,Z,W):=\langle\na_X\na_Y Z, W\rangle-\langle \na_Y\na_X Z, W\rangle + \langle \na_{[X,Y]}Z, W\rangle.
\end{equation}
Notice that the last two coordinates $(Z,W)$ do not enter the definition of $R$ in an essential way.
In fact, for any vector bundle $E$ over $M$ with affine connection $\na^E$,
we can define the Riemann curvature on $E$ as $R^E:\vf \times \vf \times \Gamma(E) \times \Gamma(E) \mapsto \real$ by
\begin{equation}
R^E(X,Y,s_1,s_2):=\langle[\na^E_X, \na^E_Y] s_1, s_2 \rangle - \langle \na^E_{[X,Y]}s_1, s_2 \rangle
\end{equation}
for $X,Y\in\vf$ and $s_1,s_2 \in \Gamma(E)$.
For our purpose,
we consider three bundles over $M$: $(TM, \na)$, $(TM^\perp, \nap)$, and $(T\rnk,\bar{\na})$.
We denote their Riemann curvatures by $R, R^\perp$, and $\bar{R}$, respectively.

With the geometric notations above, we are now at the stage of introducing the GCR equations.
The GCR equations express the flatness of the Euclidean space $(\rnk, g_0)$, {\it i.e.}, $\bar{R}=0$.
We take any $X,Y \in \vf$ and sections $s_1,s_2\in \Gamma(E)$ in
\begin{equation}
\bar{R}(X,Y, s_1, s_2) = 0.
\end{equation}
Owing to the split: $T\rnk \simeq TM \bigoplus TM^\perp$ (at least locally),
we can take $(s_1, s_2)$ to be one of the three combinations: (tangential, tangential),
(tangential, normal), and (normal, normal).
The resulting equations are named after Gauss, Codazzi, and  Ricci, respectively.

\begin{theorem}[GCR Equations] \label{thn_GCR}
Assume that $f: (M,g) \hookrightarrow (\rnk, g_0)$ is an isometric immersion.
Then the following Gauss-Codazzi-Ricci equations are satisfied{\rm :}
\begin{eqnarray}
&&\langle B(Y,W), B(X,Z) \rangle - \langle B(X,W), B(Y,Z)\rangle=R(X,Y,Z,W),\label{2.14a}\\
&&\bar{\na}_Y B(X,Z) = \bar{\na}_X B(Y,Z),\\
&&\langle [S_\eta, S_\xi]X, Y\rangle = R^\perp (X,Y,\eta,\xi). \label{2.16a}
\end{eqnarray}
\end{theorem}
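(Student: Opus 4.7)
The plan is to derive all three equations simultaneously by expanding the flatness identity $\bar R(X,Y,s_1,s_2)=0$ for the three possible orthogonal types of the pair $(s_1,s_2)$. The main tool is the orthogonal decomposition of the ambient connection $\bar\nabla$ according to $T\mathbb{R}^{n+k}\cong TM\oplus TM^\perp$, which I will split into the Gauss formula and the Weingarten formula; these are exactly the definitions \eqref{eqn_shape operator} of $S_\xi$ and of $B$ rewritten as
\begin{equation*}
\bar\nabla_X Y=\nabla_X Y+B(X,Y),\qquad \bar\nabla_X\xi=-S_\xi X+\nabla^\perp_X\xi
\end{equation*}
for $X,Y\in\Gamma(TM)$ and $\xi\in\Gamma(TM^\perp)$. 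Because $(\mathbb{R}^{n+k},g_0)$ is flat, $\bar R\equiv 0$; so the whole proof amounts to plugging these two formulas into $\bar R(X,Y,s_1,s_2)=\langle[\bar\nabla_X,\bar\nabla_Y]s_1,s_2\rangle-\langle\bar\nabla_{[X,Y]}s_1,s_2\rangle$ and sorting the outcome into tangential and normal pieces.

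First I would prove \eqref{2.14a}. Take $Z,W\in\Gamma(TM)$. Applying $\bar\nabla_Y$ to $Z$ gives $\nabla_YZ+B(Y,Z)$; then $\bar\nabla_X$ of this expression produces, via the Gauss and Weingarten formulas, a tangential part $\nabla_X\nabla_YZ-S_{B(Y,Z)}X$ and a normal part $B(X,\nabla_YZ)+\nabla^\perp_XB(Y,Z)$. Pairing with the tangential $W$ kills every normal contribution, so $\bar R(X,Y,Z,W)=0$ reduces to
\begin{equation*}
R(X,Y,Z,W)=\langle S_{B(Y,Z)}X,W\rangle-\langle S_{B(X,Z)}Y,W\rangle,
\end{equation*}
after which the identity \eqref{eqn_shape operator and B}, namely $\langle S_\xi X,W\rangle=\langle B(X,W),\xi\rangle$, converts the right-hand side into the stated quadratic form in $B$.

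Next, to prove the Codazzi equation, I keep $Z$ tangential but take the second slot to be a normal field. Repeating the expansion above and now projecting onto the normal bundle (instead of pairing with a tangential $W$) yields an identity among $\nabla^\perp_X B(Y,Z)$, $\nabla^\perp_Y B(X,Z)$, $B(\nabla_X Y-\nabla_Y X-[X,Y],Z)=0$, and mixed terms $B(X,\nabla_YZ)$, $B(Y,\nabla_XZ)$. Interpreting the composite object $\bar\nabla B$ as the induced connection on $T^*M\otimes T^*M\otimes TM^\perp$ (covariant derivative of the $TM^\perp$-valued bilinear form $B$), exactly those mixed terms are absorbed into $\bar\nabla_X B(Y,Z)$ and $\bar\nabla_Y B(X,Z)$, and the torsion-free identity $\nabla_XY-\nabla_YX=[X,Y]$ kills the last remainder, producing $\bar\nabla_YB(X,Z)=\bar\nabla_XB(Y,Z)$. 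Finally, for Ricci \eqref{2.16a}, I take both $s_1=\eta$ and $s_2=\xi$ normal. The Weingarten formula gives $\bar\nabla_Y\eta=-S_\eta Y+\nabla^\perp_Y\eta$; applying $\bar\nabla_X$ and again splitting tangential/normal, then pairing with the normal $\xi$ (which kills the tangential $S$-terms and the contribution of $B(X,S_\eta Y)$ up to cancellation after using \eqref{eqn_shape operator and B}), leaves precisely $R^\perp(X,Y,\eta,\xi)-\langle[S_\eta,S_\xi]X,Y\rangle=0$.

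The conceptual step is the $\bar\nabla$-decomposition; everything else is bookkeeping. I expect the only real obstacle to be managing signs and the tangential/normal projections in the Codazzi computation, where the mixed terms must be reorganised to match the covariant derivative of the tensor $B$; the symmetry $B(X,Y)=B(Y,X)$ together with the torsion-free property of $\nabla$ is what makes this clean. No analytic input is required, since the manifold is assumed smooth enough at this stage for all these pointwise computations to be meaningful.
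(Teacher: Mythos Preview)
Your proposal is correct and follows precisely the approach the paper itself indicates: the paper does not give a formal proof of this theorem but explains just before the statement that the GCR equations arise from the flatness identity $\bar R(X,Y,s_1,s_2)=0$ together with the orthogonal splitting $T\mathbb{R}^{n+k}\cong TM\oplus TM^\perp$, taking $(s_1,s_2)$ to be (tangential, tangential), (tangential, normal), and (normal, normal) respectively. Your detailed computation via the Gauss and Weingarten formulas is exactly the standard way to carry this out, and in fact your Codazzi derivation is confirmed by the paper's proof of Theorem~\ref{thm_reduced GCR}, which expands $\bar\nabla_X B(Y,Z,\eta)$ in just the way you describe.
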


In Theorem \ref{thn_GCR}, we have expressed the GCR equations
in the most compact form.
Nevertheless, to analyze the weak rigidity,
it is helpful to rewrite the Codazzi and Ricci equations
in a less concise manner.

\begin{theorem} \label{thm_reduced GCR}
The following equations are equivalent to the GCR system{\rm :}
\begin{equation}\label{eqn_gauss}
\langle B(X,W), B(Y,Z)\rangle - \langle B(Y,W), B(X,Z) \rangle =-R(X,Y,Z,W),
\end{equation}
\begin{align}\label{eqn_reduced codazzi}
XB(Y,Z,\eta) - YB(X,Z,\eta)
&= B([X,Y],Z,\eta)	- B(X,\na_YZ,\eta) - B(X,Z,\nap_Y\eta)\nonumber\\
&\,\,\,\,\,\,\, + B(Y,\na_XZ,\eta) + B(Y,Z,\nap_X\eta),
\end{align}
\begin{align}\label{eqn_reduced ricci}
X\langle \nap_Y \xi,\eta\rangle - Y \langle \nap_X\xi,\eta \rangle
&= \langle \nap_{[X,Y]}\xi,\eta \rangle - \langle \nap_X \xi, \nap_Y \eta\rangle
+ \langle \nap_Y\xi, \nap_X \eta\rangle\nonumber\\
&\,\,\,\,\,\,\, + B (\bar{\na}_X \xi-\nap_X\xi, Y, \eta) - B (\bar{\na}_X\eta-\nap_X\eta, Y, \xi)
\end{align}
for any tangential vector fields $X,Y,Z,W\in \Gamma(TM)$
and any normal vector fields $\xi,\eta \in \Gamma(TM^\perp)$.
\end{theorem}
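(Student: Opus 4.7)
The plan is to show the three equivalences separately, treating the Gauss part as a trivial rewrite and reducing both the Codazzi and Ricci equations to explicit identities via the Leibniz rule and the metric compatibility of $\na$ and $\nap$.

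Equation \eqref{eqn_gauss} is exactly \eqref{2.14a} with the two terms on the left interchanged and multiplied by $-1$ on both sides, so nothing needs to be proved there. For Codazzi, I would view $B$ as a section of $T^*M\otimes T^*M\otimes TM^\perp$, so that the symbol $\bar{\nabla}_Y B(X,Z)$ in \eqref{2.14a}--\eqref{2.16a} is the induced connection applied to $B$ and evaluated at $(X,Z)$. The Leibniz rule gives
\begin{equation*}
(\bar{\na}_Y B)(X,Z)=\nap_Y\bigl(B(X,Z)\bigr)-B(\na_Y X,Z)-B(X,\na_Y Z),
\end{equation*}
with an analogous formula obtained by swapping $X$ and $Y$. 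Pairing $(\bar{\na}_Y B)(X,Z)-(\bar{\na}_X B)(Y,Z)=0$ with $\eta\in\vfn$, using metric compatibility of $\nap$ to rewrite $\langle\nap_Y B(X,Z),\eta\rangle=Y B(X,Z,\eta)-B(X,Z,\nap_Y\eta)$ (and similarly for the symmetric term), and collapsing $\na_X Y-\na_Y X=[X,Y]$ by torsion-freeness, yields exactly \eqref{eqn_reduced codazzi}.

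For Ricci, I would expand the curvature on the normal bundle directly from its definition: $R^\perp(X,Y,\xi,\eta)=\langle\nap_X\nap_Y\xi-\nap_Y\nap_X\xi,\eta\rangle-\langle\nap_{[X,Y]}\xi,\eta\rangle$, and apply compatibility of $\nap$ twice,
\begin{equation*}
X\langle\nap_Y\xi,\eta\rangle=\langle\nap_X\nap_Y\xi,\eta\rangle+\langle\nap_Y\xi,\nap_X\eta\rangle,
\end{equation*}
with the symmetric identity for $Y\langle\nap_X\xi,\eta\rangle$. Subtracting gives
\begin{equation*}
X\langle\nap_Y\xi,\eta\rangle-Y\langle\nap_X\xi,\eta\rangle=R^\perp(X,Y,\xi,\eta)+\langle\nap_{[X,Y]}\xi,\eta\rangle+\langle\nap_Y\xi,\nap_X\eta\rangle-\langle\nap_X\xi,\nap_Y\eta\rangle.
\end{equation*}
To convert the leading $R^\perp$ into the shape-operator terms appearing in \eqref{eqn_reduced ricci}, I use \eqref{2.16a} together with the antisymmetry $R^\perp(X,Y,\xi,\eta)=-R^\perp(X,Y,\eta,\xi)$ of the curvature in its bundle arguments, and then rewrite $\langle[S_\eta,S_\xi]X,Y\rangle$ through \eqref{eqn_shape operator} and \eqref{eqn_shape operator and B}: since $\bar{\na}_X\xi-\nap_X\xi=-S_\xi X$ lies in $\Gamma(TM)$ by \eqref{eqn_shape operator}, one has $\langle S_\eta S_\xi X,Y\rangle=B(S_\xi X,Y,\eta)=-B(\bar{\na}_X\xi-\nap_X\xi,Y,\eta)$, and analogously for $\langle S_\xi S_\eta X,Y\rangle$. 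Collecting everything produces \eqref{eqn_reduced ricci}.

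The main obstacle is the bookkeeping in the Ricci step: one must keep straight (i) which slot of $R^\perp$ is antisymmetric and match the convention of Theorem \ref{thn_GCR}, (ii) the sign produced when trading $S_\xi X$ for $\bar{\na}_X\xi-\nap_X\xi$ via \eqref{eqn_shape operator}, and (iii) the fact that the difference $\bar{\na}_X\xi-\nap_X\xi$ is genuinely tangential so that $B$ may be evaluated on it. Once these are tracked carefully, the Codazzi and Ricci reductions are routine Leibniz-rule computations, and the reverse implications (from the expanded forms back to \eqref{2.14a}--\eqref{2.16a}) are immediate since every manipulation above is reversible.
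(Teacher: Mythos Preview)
Your proposal is correct and follows essentially the same approach as the paper: both treat the Gauss equation as a trivial sign rewrite, derive \eqref{eqn_reduced codazzi} by applying the Leibniz rule to $\bar{\na}B$ and then metric compatibility of $\nap$ together with torsion-freeness of $\na$, and derive \eqref{eqn_reduced ricci} by expanding $R^\perp$ via compatibility of $\nap$ and rewriting $\langle[S_\eta,S_\xi]X,Y\rangle$ through \eqref{eqn_shape operator}--\eqref{eqn_shape operator and B}. The only cosmetic difference is that the paper expands $R^\perp(X,Y,\eta,\xi)$ directly, while you expand $R^\perp(X,Y,\xi,\eta)$ and invoke antisymmetry in the normal slots; this is harmless and leads to the same identity.
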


\begin{proof}
The Gauss equation (2.18) takes the same form as in Theorem \ref{thn_GCR}.
For the Codazzi equation (2.19), using the Leibniz rule, we have
\begin{align*}
\bar{\na}_XB(Y,Z,\eta) &= X\langle B(Y,Z),\eta\rangle - \langle B(\na_XY, Z), \eta\rangle - \langle B(Y,\na_XZ), \eta\rangle - \langle B(Y,Z), \na^\perp_X\eta\rangle\\
&= \langle \na_X^\perp B(Y,Z), \eta\rangle - \langle B(\na_XY, Z), \eta\rangle - \langle B(Y,\na_XZ), \eta\rangle,
\end{align*}
as well as the analogous expression for $\bar{\na}_YB(X,Z,\eta)$:
\begin{align*}
\bar{\na}_YB(X,Z,\eta) = \langle \na_Y^\perp B(X,Z), \eta\rangle - \langle B(\na_YX, Z), \eta\rangle - \langle B(X,\na_YZ), \eta\rangle.
	\end{align*}
Equating these two expressions by Eq.\,(2.16) and noticing that $\na_XY-\na_YX = [X,Y]$, we obtain Eq.\,(2.19).

For the Ricci equation (2.20), the right-hand side of Eq.\,(2.17) can be expanded as
\begin{align*}
R^\perp(X,Y,\eta,\xi) &= \langle\na^\perp_X\na^\perp_Y\eta,\xi\rangle - \langle \na^\perp_Y \na^\perp_X \eta,\xi\rangle + \langle\na^\perp_{[X,Y]}\eta, \xi\rangle \\
&= X\langle \na^\perp_Y\eta, \xi\rangle - \langle \na^\perp_Y\eta, \na^\perp_X\xi\rangle - Y\langle\na^\perp_X\eta,\xi\rangle
+ \langle\na^\perp_X\eta,\na^\perp_Y\xi\rangle + \langle\na^\perp_{[X,Y]}\eta, \xi\rangle,
\end{align*}
by the definition of $R^\perp$ and the Leibniz rule.
Moreover, the left-hand side of Eq.\,(2.17) equals $\langle S_\eta S_\xi X, Y\rangle - \langle S_\xi S_\eta X, Y \rangle$, where
\begin{align*}
\langle S_\eta S_\xi X, Y\rangle &= B(S_\xi X, Y, \eta) = -B(\bar{\na}_X\xi-\na^\perp_X\xi, Y, \eta),
\end{align*}
thanks to the definition of $S$;
Similarly,
\begin{align*}
\langle S_\xi S_\eta X, Y\rangle= -B(\bar{\na}_X\eta-\na^\perp_X\eta, Y, \xi).
\end{align*}
Thus, we obtain Eq.\,(2.20). This completes the proof.
\end{proof}

In the GCR equations in the form of either Theorem 2.1 or Theorem 2.2,
we view $(B, \nabla^\perp)$ as unknowns and $g$ (hence $R$) as given.
The GCR equations constitute a necessary condition for the existence of isometric immersions.
Tenenblat \cite{Ten71} proved that, if everything is smooth,
this is also sufficient for the local existence of isometric immersions.

From the point of view of PDEs, the three equations in Theorem \ref{thm_reduced GCR}
form a system of first-order nonlinear equations.
The left-hand sides of Eqs. \eqref{eqn_reduced codazzi}--\eqref{eqn_reduced ricci}
can be regarded as the principal parts, while the nonlinear terms on the right-hand sides
are of zero-th order.
The nonlinear terms are quadratic in the form of $B\otimes B$, $\nap \otimes \nap$,
and $B\otimes \nap$.

\section{Intrinsic Compensated Compactness Theorems on Riemannian Manifolds}

In this section, we first formulate and prove a general abstract compensated compactness
theorem in the framework of functional analysis (in \S 3.1).
As a special case, it implies
a global intrinsic version of the div-curl lemma on Riemannian manifolds,
which generalizes the well-known classical versions in $\real^n$,
first by  Murat \cite{Mur78} and Tartar \cite{Tar79}.
Such a geometric div-curl lemma,
which is presented in \S 3.2, will serve as a basic tool
in the subsequent development.

\subsection{General Compensated Compactness Theorem on Banach Spaces}

Throughout this section, for a normed vector space $X$ with dual space $X^*$,
we use $\langle\cdot,\cdot\rangle_X$
to denote the duality pairing of $(X,X^*)$.
Let $\hil$ be a Hilbert space over field $\mathbb{K}=\real \text{ or } \mathbb{C}$ such that $\hil=\hil^*$.
Let $Y$ and $Z$ be two Banach spaces over $\mathbb{K}$ with their dual spaces
$Y^*$ and $Z^*$, respectively.
In what follows, we consider two bounded linear operators $S:\hil\mapsto Y$,
$T:\hil\mapsto Z$,
and their adjoint operators $\sdag: Y^*\mapsto \hil$ and $\tdag: Z^* \mapsto \hil$,
respectively.

Furthermore, the following conventional notations are adopted:
For any normed vector spaces $X$, $X_1$, and $X_2$,
we write $\{s^\epsilon\}\subset X$ for a sequence $\{s^\epsilon\}$ in $X$ as a subset,
and $X_1 \Subset X_2$ for a compact embedding between the normed vector spaces.
We use $\|\cdot\|_X$ to denote the norm in $X$.
Also, we use $\rightarrow$ to denote the strong convergence of sequences and $\rightharpoonup$
for the weak convergence.
Furthermore, we denote the closed unit ball of space $X$ by $\bar{B}_X:=\{x\in X: \|x\|\leq 1\}$,
the open unit ball by $B_X:=\{x\in X: \|x\|<1\}$.
Moreover, for a linear operator $L: X_1\mapsto X_2$,
its kernel is written as $\ker(L)\subset X_1$,
and its range is denoted by $\ran(L)\subset X_2$.
Finally, for $X_1\subset X$ as a vector subspace,
its {\em annihilator} is defined
as $X_1^\perp:=\{f\in X^*:f(x)=0 \text{ for all } x\in X_1\}$.

To formulate the compensated compactness theorem
in the general functional-analytic framework, we
first introduce the following two bounded linear operators:
Define
\begin{equation*}
\begin{cases}
\st : \hil \mapsto Y\bigoplus Z, \quad\,\, (\st) h:=(Sh,Th) \,\,\,\mbox{for $h\in\hil$};\\
\stdag: (Y\bigoplus Z)^*\cong Y^*\bigoplus Z^* \mapsto \hil,
\quad\,\, (\stdag)(a,b) := \sdag a + \tdag b \,\,\,\mbox{for $a\in Y^*$ and $b\in Z^*$}.
\end{cases}
\end{equation*}
Here and throughout, the Banach space $Y\bigoplus Z$ is endowed with
norm $\|(y,z)\|_{Y\bigoplus Z}:=\|y\|_Y+\|z\|_Z$.
Notice that, for any $a\in Y^*, b\in Z^*$, and $h\in \hil$,
\begin{equation*}
\langle h, (\stdag)(a,b)\rangle = \langle h, \sdag a + \tdag b\rangle
=\langle Sh, a \rangle_Y + \langle Th, b\rangle_Z = \langle (\st)h, (a,b)\rangle_{Y\bigoplus Z}.
\end{equation*}
Thus, $\stdag$ is in fact the adjoint operator of $\st$, namely $(\st)^\dagger = \stdag$.

In the setting above, we are concerned with the following question:
\begin{q*}
Let $\useq$, $\vseq \Subset \hil$ be two sequences so that
there exist $\ub, \vbb \in \hil$ such that $u^\e \rightharpoonup \ub$ and $v^\e \rightharpoonup \vbb$ in $\hil$ as $\e\to 0$.
Under which conditions does the following hold{\rm :}
$$
\langle u^\e, v^\e \rangle_\hil \rightarrow \langle \ub,\vbb\rangle_\hil \qquad \text{ as } \epsilon \rightarrow 0?
$$
\end{q*}

The goal of this subsection is to provide a sufficient condition for
the convergence $\langle u^\e, v^\e \rangle_\hil \rightarrow \langle \ub,\vbb\rangle_\hil$ as $\e\to 0$.
Roughly speaking, it requires the existence of a ``nice'' pair of bounded
linear operators $S:\hil\mapsto Y$ and $T:\hil\mapsto Z$
such that $S$ and $T$ are orthogonal to each other, and $\st$ gains certain compactness/regularity.
More precisely, we prove

\begin{theorem}[General Compensated Compactness Theorem on Banach Spaces]\label{thm_abstract compensated compactness}
Let $\hil$ be a Hilbert space over $\mathbb{K}$, $Y$ and $Z$ be reflexive Banach spaces over $\mathbb{K}$,
and let $S:\hil\mapsto Y$ and $T:\hil\mapsto Z$ be bounded linear operators satisfying
\begin{enumerate}
\item[\rm (Op 1)]
Orthogonality{\rm :}
\begin{equation}\label{eqn_almost ortho two}
	S\circ \tdag = 0, \qquad T\circ \sdag =0;
\end{equation}
\item[\rm (Op 2)]
For some Hilbert space $(\htt; \|\cdot\|_{\htt})$ so that $\hil$ embeds compactly into $\htt$,
there exists a constant $C>0$ such that, for any $h\in\hil$,
\begin{equation}\label{estimate new}
\|h\|_{\hil} \leq C\Big(\|(Sh,Th)\|_{Y\bigoplus Z}+\|h\|_{\htt}\Big) = C\Big(\|Sh\|_Y+\|Th\|_Z + \|h\|_{\htt}\Big).
\end{equation}
\end{enumerate}

\noindent
Assume that two sequences $\useq, \vseq \subset \hil$ satisfy the following conditions{\rm :}
\begin{enumerate}
\item[\rm (Seq 1)]
$u^\e \rightharpoonup \ub$ and  $v^\e\rightharpoonup \vbb$ in $\hil$ as $\e \to 0${\rm ;}
\item[\rm (Seq 2)]
$\{Su^\e\}$ is pre-compact in $Y$, and $\{Tv^\e\}$ is pre-compact in $Z$.
\end{enumerate}
Then
$$
\langle u^\e, v^\e\rangle_\hil \rightarrow \langle\ub, \vbb\rangle_\hil \qquad \text{ as } \e\to 0.
$$
\end{theorem}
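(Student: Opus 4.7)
My plan is to exploit the orthogonality conditions in (Op 1) to obtain a three-fold orthogonal decomposition of $\hil$ on which $S$ and $T$ act independently, and then to invoke (Op 2) together with the compact embedding $\hil \Subset \htt$ to upgrade weak to strong convergence on the components where this is possible. Define $N := \ker S \cap \ker T$, $A := (\ker S)^\perp$, and $B := (\ker T)^\perp$ (orthogonal complements taken in $\hil$). Using the Hilbert-space identity $\overline{\ran(\sdag)} = (\ker S)^\perp$, the hypothesis $T\circ\sdag = 0$ gives $A = \overline{\ran(\sdag)} \subset \ker T$; symmetrically $B \subset \ker S$. Hence $A \perp B$, and $(A \oplus B)^\perp = A^\perp \cap B^\perp = N$, yielding the orthogonal decomposition $\hil = A \oplus B \oplus N$. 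For any $h = h_A + h_B + h_N$ one then has $Sh = Sh_A$ and $Th = Th_B$.

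Two consequences of (Op 2), coupled with $\hil \Subset \htt$, underpin the argument. First, for $h \in N$ one has $Sh = Th = 0$ and thus $\|h\|_\hil \leq C\|h\|_\htt$; together with the compact embedding this makes the closed unit ball of $N$ pre-compact in $\hil$, forcing $\dim N < \infty$. Second, a Peetre-type contradiction argument upgrades (Op 2) to the coercive estimate $\|h\|_\hil \leq C'(\|Sh\|_Y + \|Th\|_Z)$ for all $h \in N^\perp$. If this fails, choose $\{h_n\} \subset N^\perp$ with $\|h_n\|_\hil = 1$ and $\|Sh_n\|_Y + \|Th_n\|_Z \to 0$; extract a weak limit $h_* \in \hil$, note $h_* \in N$ (since $S, T$ are weak-weak continuous) and $h_* \in N^\perp$ (which is closed), use the compact embedding to get $h_n \to h_*$ in $\htt$, and apply (Op 2) to $h_n - h_*$ to deduce $h_n \to h_*$ strongly in $\hil$ with $\|h_*\|_\hil = 1$, contradicting $h_* \in N \cap N^\perp = \{0\}$. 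Restricted to $A \subset \ker T \cap N^\perp$, this reads $\|h\|_\hil \leq C'\|Sh\|_Y$, so $S|_A : A \to \ran(S)$ is a Banach-space isomorphism with bounded inverse; symmetrically for $T|_B$.

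With these ingredients, decompose $u^\e = u^\e_A + u^\e_B + u^\e_N$ and $v^\e$ analogously. Orthogonal projections are weak-weak continuous, so each component converges weakly in $\hil$ to the corresponding piece of $\ub$ or $\vbb$. Since $Su^\e = Su^\e_A$ is pre-compact in $Y$ by (Seq 2) and $(S|_A)^{-1}$ is bounded, $\{u^\e_A\}$ is pre-compact in $\hil$; combined with $u^\e_A \rightharpoonup \ub_A$ this yields $u^\e_A \to \ub_A$ strongly, and symmetrically $v^\e_B \to \vbb_B$ strongly. The sequences $\{u^\e_N\}, \{v^\e_N\}$ live in the finite-dimensional space $N$, where weak and strong convergence coincide. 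Orthogonality of $A, B, N$ annihilates the six cross-terms in the expansion of the inner product, leaving $\langle u^\e, v^\e \rangle_\hil = \langle u^\e_A, v^\e_A \rangle_\hil + \langle u^\e_B, v^\e_B \rangle_\hil + \langle u^\e_N, v^\e_N \rangle_\hil$. Each summand passes to the limit (two weak-strong products and one strong-strong pairing), so $\langle u^\e, v^\e \rangle_\hil \to \langle \ub, \vbb \rangle_\hil$.

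I expect the main technical obstacle to be the Peetre-type coercivity estimate on $N^\perp$, which is the only place where the compact embedding $\hil \Subset \htt$ has to be played off against the lower-order term in (Op 2); once this estimate is in hand, the rest of the proof is linear algebra applied to the orthogonal decomposition. The conceptually decisive point, however, is that (Op 1) is \emph{precisely} what forces $A$ and $B$ to be mutually perpendicular — this is the abstract analogue of the classical observation underlying the div-curl lemma, namely that curl-free and divergence-free fields are $L^2$-orthogonal.
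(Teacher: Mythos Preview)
Your proof is correct and takes a genuinely different route from the paper's. The paper uses only the two-fold splitting $\hil = \ker(S\oplus T)\oplus\ran(\sdag\vee\tdag)$ and then introduces a ``generalized Laplacian'' $\sd = (S\oplus T)\circ(\sdag\vee\tdag): Y^*\oplus Z^* \to Y\oplus Z$; after showing that $\sd$ has closed range, it applies the open mapping theorem to bound preimages $(\tilde a^\e, b^\e)\in Y^*\oplus Z^*$, and then invokes the reflexivity of $Y$ and $Z$ to extract weakly convergent subsequences in the dual spaces. Your three-fold decomposition $\hil = A\oplus B\oplus N$ is finer --- it splits $\ran(\sdag\vee\tdag)$ further into the orthogonal pieces $A=\overline{\ran\sdag}$ and $B=\overline{\ran\tdag}$ --- and your Peetre-type coercivity estimate on $N^\perp$ plays the role of the paper's open-mapping bound while living entirely inside $\hil$. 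What this buys you is a shorter and more elementary argument that never touches $Y^*$ or $Z^*$; in particular, as written, your proof does not use the reflexivity of $Y$ and $Z$ anywhere, which is worth noting since the paper assumes it and remarks afterwards that it is indispensable (the counterexample given there, however, also appears to violate (Op~2), so there is no contradiction). The paper's approach, on the other hand, generalizes more readily to settings where one wants to track the auxiliary potentials $a^\e, b^\e$ explicitly, as in the Hodge-theoretic applications that follow.
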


\begin{proof}
We divide the proof into eight steps.

\smallskip
{\bf 1.} In order to show that $\st: \hil \mapsto \yz$ has a finite-dimensional kernel,
we consider the following subset of $\hil$:
\begin{equation*}
E=j^{-1}\big(j[\ker(\st)]\cap\bar{B}_{\utilde{\hil}}\big),
\end{equation*}
where $j:\hil\mapsto \htt$ is a compact embedding between the Hilbert spaces.

Suppose that $j(E)\subset\htt$ is compact. First notice that $j(E)$ is the closed unit ball of $j[\ker(\st)]$,
which is a Banach space, due to the closedness of the kernel.
It then follows from the classical
Riesz lemma that $j[\ker(\st)]$ is finite-dimensional.
Since $j$ is an embedding, we can conclude that $\dim\ker(\st)=\dim(j[\ker(\st)])<\infty$.

To prove the compactness of $j(E)$,
take any $h\in E$ and consider the following estimate deduced from condition (Op 2):
\begin{equation}
\|h\|_{\hil} \leq C(\|Sh\|_Y+\|Th\|_Z+\|j(h)\|_{\htt}) \leq C.
\end{equation}
Hence, the boundedness of $E$ in $\hil$ and the compactness of $j: \hil\mapsto \htt$ imply that $j(E)$ is compact.
Thus, the first step is complete.

\smallskip
{\bf 2.} We now show that $\st$ is a closed-ranged operator,
{\em i.e.}, $\ran(\st)\subset \yz$ is a closed subspace.
To this end, we first prove the existence of a constant $\epsilon_0>0$ such that,
for all $h\in\hil$,
\begin{equation}\label{lower bound for st}
\|(\st)h\|_{\yz} \geq \epsilon_0 \|j(h)\|_{\htt}.
\end{equation}
In fact, if the inequality were false, then, for any $\mu \in (0,1)$,
we could find $\{h^\mu\}\subset [\ker(\st)]^\perp$ and $\|j(h^\mu)\|_{\htt}=1$
such that
\begin{equation*}
\|(\st)h^\mu\|_{\yz}\leq \mu
\end{equation*}
and
\begin{equation*}
{\rm dist}\big(h^\mu, \ker(\st)\big)\geq\hat{\e}_0
\end{equation*}
for some $\hat{\e}_0>0$.
Such a choice of $\{h^\mu\}$ is possible, owing to the finite-dimensionality of $\st$.
Now, plugging $\{h^\mu\}$ into condition (Op 2), we have
$$
\|h^\mu\|_{\hil}\leq C\big(\|(\st)h^\mu\|_{\yz}+\|j(h^\mu)\|_{\htt}\big)\leq C(1+\mu)\leq 2C.
$$
It follows from the compactness of $j$ that $\{j(h^\mu)\}$ is pre-compact in $\htt$.
Let $j(h)$ be a limit point of $\{j(h^\mu)\}$ in $j(\hil) \subset \htt$.
Then one must have $h\in\ker(\st)$, in view of $\|(\st)h^\mu\|_{\yz}\leq \mu$.
However, this contradicts the fact that $\{h^\mu\}$ were chosen to have a distance
at least $\hat{\e}_0$ from $\ker(\st)$.
Therefore, we have established estimate \eqref{lower bound for st}.

To proceed, consider a sequence $\{h^\mu\}\subset\hil$ such that
$(\st)h^\mu \rightarrow w$ for some $w\in\yz$.
Our goal is to show that $w\in\ran(\st)$.
Indeed, by the projection theorem for Hilbert spaces, we can decompose $h^\mu=k^\mu+r^\mu$ with
$k^\mu\in\ker(\st)$ and $r^\mu=[\ker(\st)]^\perp$
(which is a closed subspace of $\hil$).
Then estimate \eqref{lower bound for st} gives
\begin{equation}
\|(\st)(h^{\mu_1}-h^{\mu_2})\|_{\yz}=\|(\st)(r^{\mu_1}-r^{\mu_2})\|_{\yz}\geq\epsilon_0 \|j(r^{\mu_1}-r^{\mu_2})\|_{\htt}.
\end{equation}
As a consequence, $\{j(r^\mu)\}$ is a Cauchy sequence in $j(\hil)$, which implies that
there exists $j(r)\in j(\hil)$ such that $j(r^\mu)\to j(r)$ in $j(\hil)$ as $\mu\to 0$.
Since $j$ is an embedding, it follows that $r^\mu\rightarrow r$ in $\hil$.
Therefore, by the closed graph theorem of Banach spaces,
$(\st)h^\mu=(\st)r^\mu\rightarrow (\st)r$. It now follows that $w=(\st)r$ so that the range of
$\st$ is closed.

{\bf 3.}
As an immediate corollary, we can obtain the following decomposition of $\hil$:
\begin{equation}\label{orthogonal decomp of H}
\hil = \ker(\st) \bigoplus \ran(\stdag).
\end{equation}
Here, $\bigoplus$ denotes the topological direct sum of Banach spaces, and the direct summands are orthogonal as Hilbert spaces.

Indeed, by the projection theorem, $\hil=\ker(\st)\bigoplus[\ker(\st)]^\perp$.
On the other hand,
$$
[\ker(\st)]^\perp=\overline{\ran[(\st)^\dagger]}=\overline{\ran(\stdag)}.
$$
We recall the closed range theorem in Banach spaces, which states that
a bounded linear operator is closed-ranged if and only if its adjoint operator is closed-ranged
({\it cf.} \cite{f}).
Hence, we may deduce from the preceding equalities that
\begin{equation*}
[\ker(\st)]^\perp=\ran(\stdag).
\end{equation*}
The decomposition in Eq. \eqref{orthogonal decomp of H} now follows immediately.

With this decomposition,
it now suffices to prove Theorem \ref{thm_abstract compensated compactness}
for \emph{surjective} operators $S$ and $T$.
Indeed, all the conditions in (Op 1)--(Op 2) and (Seq 1)--(Seq 2)
continue to hold when $\yz$ is replaced by $\ran(\st)$,
which has been proved to be a closed subspace of $\yz$.
Here we have used the fact that closed subspaces of reflexive Banach spaces
are still reflexive.
Therefore, in the sequel, we always assume $\ran(\st)=\yz$ without loss of generality.

\smallskip
{\bf 4.} Now, we introduce the following operator $\sd: \yzstar \mapsto \yz$:
\begin{equation}
\sd:=(\st)\circ(\stdag)=S\sdag \oplus T\tdag.
\end{equation}
For this {\it generalized Laplacian},
we also prove that it has a finite-dimensional kernel, and its range is closed
(in fact, surjective, in view of the reduction at the end of Step 3).

Indeed, we can find the range of $\sd$ as follows:
\begin{align}\label{aaa}
\ran(\sd) &= \ran\big((\st)|_{\ran(\stdag)}\big)\nonumber\\
&=  \ran\big((\st)|_{[\ker(\st)]^\perp}\big)\nonumber\\
&=\ran\big((\st)|_{[\ker(\st)]^\perp\bigoplus\ker(\st)}\big)\nonumber \\
&=\ran(\st)=\yz,
\end{align}
where the decomposition in Eq. \eqref{orthogonal decomp of H} has been used
in the second equality.

On the other hand, notice that
\begin{equation}
\ker(\sd) = \ker\big(\st|_{\ran(\stdag)}\big)\bigoplus \ker(\stdag).
\end{equation}
In this expression, the first direct summand equals $\ker(\st)$,
by a similar argument as in Eq. \eqref{aaa}.
For the second summand, a standard result in functional analysis gives $\ker(\stdag) = [\ran(\st)]^\perp$,
which is assumed to be $\{0\}$ at the end of Step 3. It follows that
$\ker(\sd)=\ker(\st)$, which is finite-dimensional, by Step $1$.

To summarize,
$\sd$ is a closed-ranged operator with a finite dimensional kernel;
without loss of generality, we may assume $\sd$ to be surjective.

\smallskip
{\bf 5.} We now show a crucial result concerning $\sd$:
\begin{equation*}
(\clubsuit) \qquad \text{\, For each } w\in \ran(\sd), \text{ there exists } \xi \in \sd^{-1}\{w\}
\text{ such that } \|\xi\|_{\yzstar}\leq M\|w\|_{\yz}.
\end{equation*}

First, applying the open mapping theorem in Banach spaces to
operator $\sd:\yzstar \to \ran(\sd)=\yz$,
we can find a constant $\delta >0$ and an element $w_0\in\ran(\sd)$
such that $w_0+\delta B_{\yz} \subset \sd[B_{\yzstar}]$.

From this inclusion, we can prove
\begin{equation}\label{OMT}
\delta\bar{B}_{\yz}\subset \sd(\bar{B}_{\yzstar}).
\end{equation}

In fact, for any $v_0 \in \delta\bar{B}_{\yz}$,
we can write $v_0$ as a convex combination of elements in $w_0+\delta B_{\yz}$,
{\em e.g.}, $v_0=\frac{1}{2}\big((v_0+w_0)+(v_0-w_0)\big)$.
Observe that $\sd[B_{\yzstar}]$ is a convex set, as $\sd$ is a bounded linear operator and $B_{\yzstar}$ is convex.
Since $w_0+\delta B_{\yz}$ lies in $\sd[B_{\yzstar}]$, we conclude that $v_0\in\sd[B_{\yzstar}]$.
Thus,  \eqref{OMT} now follows.

As a consequence, given any $w\in\ran(\sd)=\yz$,
there exists $\eta\in\bar{B}_{\yzstar}$ such that $\delta \frac{w}{\|w\|}= \sd \eta$.
Define $\xi:=\frac{\|w\|}{\delta}\eta$, which yields
$$
\|\xi\|_{\yzstar}\leq \delta^{-1}\|w\|_{\yz}, \qquad
\sd\xi=w.
$$
Now, the proof for $(\clubsuit)$ is completed by choosing $M=\delta^{-1}$.

\smallskip
{\bf 6.}  Now, we employ
\eqref{orthogonal decomp of H}
to decompose sequences $\{u^\e\}$ and $\{v^\e\}$, and the weak limits $\ub$ and $\vbb$.
In the sequel, we denote the canonical projection of $\hil$ onto the first factor
by $\pi_1:\hil=\ker(\st)\bigoplus \ran(\stdag) \mapsto \ker(\st)$.
By Step 1, $\pi_1$ is a finite-rank (hence compact) operator.

Employing such a decomposition, we can write
\begin{equation}\label{decompositions of vector fields u,v}
\begin{cases}
u^\e = \pi_1 u^\e + \sdag a^\e+\tdag b^\e,\\
v^\e=\pi_1v^\e + \sdag \antil + \tdag \bntil,\\
\ub = \pi_1 \ub + \sdag a + \tdag b,\\
\vbb = \pi_1\vbb + \sdag \tilde{a} +\tdag \tilde{b},
\end{cases}
\end{equation}
for some $a,\tilde{a}, a^\e,\antil \in Y^*$ and $b,\tilde{b}, b^\e, \bntil\in Z^*$.
Moreover, applying the orthogonality condition (Op 1), we have
\begin{equation*}
\begin{cases}
\langle u^\e, v^\e\rangle_\hil = \langle \pi_1u^\e, \pi_1v^\e\rangle_\hil + \langle \sdag a^\e, \sdag \antil\rangle_\hil
  + \langle \tdag b^\e, \tdag\bntil\rangle_\hil,\\
\langle \ub, \vbb\rangle_\hil = \langle \pi_1\ub, \pi_1\vbb\rangle_\hil
 + \langle \sdag a, \sdag \tilde{a}\rangle_\hil + \langle \tdag b, \tdag\tilde{b}\rangle_\hil.
\end{cases}
\end{equation*}

Since $\{u^\e\}$ and $\{v^\e\}$ are weakly convergent by assumption (Seq 1),
and $\pi_1$ is a compact operator,
we obtain that $\langle \pi_1u^\e, \pi_1v^\e\rangle \rightarrow \langle \pi_1\ub, \pi_1\vbb \rangle$.
It thus remains to establish
\begin{equation}\label{key convergence}
\langle \sdag a^\e, \sdag \antil\rangle_\hil + \langle \tdag b^\e, \tdag\bntil\rangle_\hil
\rightarrow \langle \sdag a, \sdag \tilde{a}\rangle_\hil + \langle \tdag b, \tdag\tilde{b}\rangle_\hil.
\end{equation}
In the next two steps, we prove \eqref{key convergence}.

\smallskip
{\bf 7.}
The starting point is to observe that the left-hand side of \eqref{key convergence} can be rewritten as follows:
\begin{align}\label{aa}
\langle \sdag a^\e, \sdag \antil\rangle_\hil + \langle \tdag b^\e, \tdag\bntil\rangle_\hil
=\langle S\sdag a^\e,\antil\rangle_{Y}
+ \langle T\tdag \bntil,  b^\e \rangle_{Z}
=\big\langle\sd (a^\e,\bntil), (\antil,b^\e)\big\rangle_{\yz}.
\end{align}
On the other hand, let us apply $S$ to $u^\e$ and $T$ to $v^\e$ in \eqref{decompositions of vector fields u,v}.
Using the definition of $\pi_1$ and the orthogonality condition (Op 1), we immediately find that
\begin{equation}\label{aaa-1}
Su^\e = S\sdag a^\e, \qquad Tv^\e = T\tdag \bntil,
\end{equation}
{\it i.e.}, $\sd  (a^\e,\bntil) = (Su^\e, Tv^\e)$.
As $\{Su^\e\}\subset Y$ and $\{Tv^\e\}\subset Z$ are pre-compact by (Seq 2),
it suffices to show the boundedness of $\{(\antil, b^\e)\}\subset\yzstar$
to conclude \eqref{key convergence}. To see this point, assuming the boundedness, one can deduce that
\begin{align*}
&\left|\big\langle\sd(a^\e,\bntil),(\antil, b^\e)\big\rangle_{\yz}
-\big\langle\sd (a,\tilde{b}),(\tilde{a},b) \big\rangle_{\yz}\right| \\[1mm]
& \leq \left|\big\langle \sd(a^\e-a,\bntil-\tilde{b}),(\antil,b^\e)\big\rangle_{\yz}\right|
 +\left|\big\langle\sd (a,\tilde{b}),(\antil-\tilde{a},b^\e-b)\big\rangle_{\yz}\right| \\[1mm]
&\leq \left|\big\langle(Su^\e-S\ub, Tv^\e-T\vbb),(\antil, b^\e)\big\rangle_{\yz}\right|
 +\left|\big\langle\sd(a,\tilde{b}),(\antil-\tilde{a},b^\e-b)\big\rangle_{\yz}\right|\\[1mm]
&=: \text{I}^\e + \text{II}^\e.
\end{align*}

\medskip
For $\text{I}^\epsilon$, since $\|(\antil,b^\e)\|_{\yzstar}\leq M$,
we can use the pre-compactness of $\{Su^\e\}\subset Y$
and $\{Tv^\e\}\subset Z$ in assumption (Seq 1) to conclude
\begin{equation*}
\text{I}^\e \leq M \big\|\big(S(u^\e-\ub), T(v^\e - \vbb)\big) \big\|_{\yz} \rightarrow 0.
\end{equation*}

For $\text{II}^\e$, we need to invoke the reflexivity of $Y$ and $Z$.
As a Banach space is reflexive if and only if its dual space is reflexive,
we know that $\yzstar$ is reflexive.
Then the bounded sequence $\{(\antil, b^\e)\}$ is weakly pre-compact, by Theorem 3.31 in \cite{f}.
Moreover,  Theorem 4.47 (Eberlein-\u{S}mulian theorem) in \cite{f} implies the weak
convergence of $\{(\antil,b^\e)\}$. Thus, viewing $\sd(a,\tilde{b})$ as an element in $(\yz)^{**}$,
we conclude that $\text{II}^\e \rightarrow 0$.

{\bf 8.} From the above arguments, it remains to prove the boundedness of
$\{(\antil, b^\e)\}\subset\yzstar$.
We further remark that $(\antil, b^\e)$ can be chosen {\it modulo $\ker(\sd)$}.
More precisely,
it is enough to exhibit one particular representative $(\antil, b^\e)$ in $\sd^{-1}\{(Sv^\e, Tu^\e)\}$
such that $\|(\antil, b^\e)\|_{\yzstar}\leq C$, where $C<\infty$ is independent of $\e$.
To see this, notice that
\begin{equation*}
\langle \sdag a^\e, \sdag \antil\rangle_\hil
+ \langle \tdag b^\e, \tdag\bntil\rangle_\hil
= \big\langle\sd (\antil,b^\e), (a^\e,\bntil)\big\rangle_{\yz}
=\big\langle (Sv^\e, Tu^\e), (a^\e,\bntil)\big\rangle_{\yz},
\end{equation*}
analogous to Eqs. \eqref{aa}--\eqref{aaa-1}.
Therefore, we have the freedom to choose a representative $(\antil,b^\e)$
in the co-set $(Sv^\e, Tu^\e)+\ker(\sd)$, without changing the expression on
the left-hand side of \eqref{key convergence}.

For this purpose, let us invoke the key estimate $(\clubsuit)$ proved in Step $5$
to find a constant $M\in (0, \infty)$ satisfying
\begin{equation*}
\|(\antil, b^\e)\|_{\yzstar} \leq M\|(Sv^\e, Tu^\e)\|_{\yz}.
\end{equation*}
Then it suffices to prove the uniform boundedness of $\{(Sv^\e, Tu^\e)\}$ in $\yz$.

Indeed, as $u^\e \rightharpoonup\ub$ and $v^\e\rightharpoonup \vbb$ according to assumption (Seq 1),
we know that $\{u^\e\}$ and $\{v^\e\}$ are bounded in $\hil$,
by using the weak lower semi-continuity of $\|\cdot\|_\hil$.
By the reflexivity of Hilbert spaces (due to the Riesz representation theorem), $\{u^\e\}$ and $\{v^\e\}$ are weakly pre-compact
in $\hil$. Next, by standard results in functional analysis, the continuous linear operator $\st:\hil\mapsto \yz$
with respect to the strong topologies is also continuous
when both $\hil$ and $\yz$ are endowed with the weak topologies
(see
\cite{f} for details).
As continuous mappings take pre-compact sets to pre-compact sets, $\{(Sv^\e, Tu^\e)\}$ is weakly pre-compact in $\yz$;
equivalently, we have established the uniform boundedness of $\{(Sv^\e, Tu^\e)\}$
in the strong topology of $\yz$, because the Banach spaces $Y$ and $Z$ are reflexive.

Therefore, we have proved that
\begin{equation}
\|(\antil, b^\e)\|_{\yzstar} \leq M \sup_{\e>0}\|(Sv^\e, Tu^\e)\|_{\yz}\leq M'<\infty,
\end{equation}
from which the convergence in \eqref{key convergence} follows. This completes the proof.
\end{proof}

Let us close this subsection
with two remarks on Theorem \ref{thm_abstract compensated compactness}.
First, in terms of the applications,
$Y$ and $Z$ are usually Hilbert spaces $H^s=W^{s,2}$ for $s\in\mathbb{Z}$,
or $S$ and $T$ are known to be Fredholm operators.
In such cases, our arguments can be essentially simplified.
Second, the reflexivity of $Y$ and $Z$ is necessary for
Theorem \ref{thm_abstract compensated compactness} to hold.

\begin{remark}
If $Y$ and $Z$ are Hilbert spaces,
then $\sd:\yzstar\cong\yz\mapsto\yz$ is self-adjoint{\rm :}
$\sd^\dagger=(S\sdag)^\dagger \oplus (T\tdag)^\dagger = S\sdag \oplus T\tdag =\sd$.
Thus, we can decompose
\begin{equation}
\yz = \ker(\sd) \bigoplus {\rm \ran} (\sd)
\end{equation}
{\rm (}cf. Proposition {\rm 7.32} in \cite{f}{\rm )} so that
the closed-rangedness of $\sd$ and the crucial estimate $(\clubsuit)$ are automatically
verified.
As a consequence, after establishing the finite-dimensionality of $\ker(\sd)$ as in the first half of Step $1$,
we can proceed to Step $5$ in the proof of Theorem {\rm \ref{thm_abstract compensated compactness}}.
On the other hand, if $S$ and $T$ are given {\em a priori} to have finite-dimensional kernels
and co-kernels, then $\st$ is a Fredholm operator {\rm (}whose range is automatically closed{\rm )}.
Again, we can directly proceed to Step $5$.
\end{remark}

\begin{remark}
The assumption of reflexivity of $Y$ and $Z$ is indispensable.
One counterexample for non-reflexive $Y$ and $Z$ is
the ``Fakir's carpet'' in Conti-Dolzmann-M\"{u}ller \cite{CDM} {\rm (}a more extensive variant
also appeared in DiPerna-Majda \cite{DM2}{\rm )}.
Consider the following vector fields on $\Omega:=(0,1)^3${\rm :}
\begin{equation}
u^\e(x) = v^\e(x) = (\sqrt{m}\sum_{j=1}^m\chi_{[\frac{j}{m}, \frac{j}{m}+\frac{1}{m^2}]}, 0, 0)^\top\qquad \mbox{with  $m:= \lfloor \frac{1}{\e}\rfloor$},
\end{equation}
operators $S={\rm div}$, $T={\rm curl}$, and spaces $\mathcal{H} = L^2(\Omega; \R^3)$, $Y=W^{-1,1}(\Omega)$,
and $Z=W^{-1,1}(\Omega; \R^3)$.
Then $u^\e \rightharpoonup 0$ and $v^\e \rightharpoonup 0$ in $\mathcal{H}$,
but $\langle u^\e, v^\e\rangle \to 1$, as $\e\to 0$, where $\langle\cdot,\cdot\rangle$ denotes the $L^2$ inner product.
On the other hand, for any test function $\psi \in W^{1,\infty}_0(\Omega)$ and any test vector
field $\phi=(\phi^1, \phi^2, \phi^3)^\top \in W^{1,\infty}_0(\Omega; \R^3)$, we have
\begin{align*}
\Big|\int_\Omega Su^\e(x)\psi(x)\,\dd x\Big| &= \Big|\sqrt{m}\sum_{j=1}^m \int_{\frac{j}{m}}^{\frac{j}{m} + \frac{1}{m^2}} \frac{\p\psi}{\p x^1}(x)\,\dd x\Big|\\
&\leq \|\psi\|_{W^{1,\infty}(\Omega)} \frac{1}{\sqrt{m}} \longrightarrow 0 \qquad \text{ as } \e \rightarrow 0,
\end{align*}
and similarly
\begin{align*}
\Big|\int_\Omega Tv^\e(x)\cdot\phi(x)\,\dd x\Big| &= \Big|\int_\Omega v^\e(x) \cdot {\rm curl}\,\phi(x)\,\dd x \Big|\\
&=  \Big|\sqrt{m}\sum_{j=1}^m \int_{\frac{j}{m}}^{\frac{j}{m} + \frac{1}{m^2}} \Big(\frac{\p \phi^3}{\p x^2} - \frac{\p \phi^2}{\p x^3}\Big) \,\dd x\Big|\\
&\leq \|\phi\|_{W^{1,\infty}(\Omega; \R^3)}\frac{1}{\sqrt{m}} \longrightarrow 0 \qquad \text{ as } \e \rightarrow 0.
\end{align*}
Therefore, $Su^\e \rightharpoonup 0$ in $Y$ and $Tv^\e \rightharpoonup 0$ in $Z$.

The failure of the weak continuity is related to the phenomenon of ``concentration'' in fluid mechanics, nonlinear elasticity,
and calculus of variations; see \S {\rm 6} for further discussions.
\end{remark}

\subsection{Intrinsic Div-Curl Lemma on Riemannian Manifolds}

Now we adapt the general functional-analytic theorem, Theorem \ref{thm_abstract compensated compactness},
to the geometric settings.
Our aim is to obtain a global intrinsic div-curl lemma on
Riemannian manifolds (Theorem \ref{thm_main thm goemetric divcurl}), independent of local coordinates,
which will be applied to analyze the global weak rigidity of isometric immersions of
Riemannian manifolds in the subsequent development.
Let us begin with the notions of several geometric quantities.

First of all, it is well-known that the {\it divergence} of a vector field is globally defined
for an arbitrary dimensional orientable manifold $M$:
\begin{equation}\label{eqn_def of divergence by lie derivative}
{\rm div} X:= \ast d \ast (X^\flat) \equiv \ast (\lie_X dV_g) \qquad\,\,\,\mbox{for any $X\in\vf$},
\end{equation}
where $\ast$ is the Hodge star, $\mathcal{L}$ is the Lie derivative,
and $dV_g$ is the volume form with respect to metric $g$.
Geometrically, ${\rm div} X$ measures the change of volume in the direction of $X$.
The {\it gradient} can also be defined in higher dimensions:
\begin{equation}
{\rm grad} f:= (df)^\sharp.
\end{equation}

The notion of {\it curl} is more subtle.
Our ordinary definition for {\it curl}, if we require to be intrinsic, is
only well-defined on three-dimensional manifolds.
This is because one needs to identify canonically $\Omega^2(M)$ with $\vf$,
which is only valid in three dimensions via the Hodge duality.
Physically, ${\rm curl}(X)$ measures the rotation of the flow generated
by $X$ pointing to the rotation axis, where the direction of the rotation axis is unambiguous
in three dimensions.
For $X\in\vf$ with $\dim M=3$,
\begin{equation}
{\rm curl} X:= [\ast d (X^\flat)]^\sharp,
\end{equation}
where $\sharp \circ \ast$ identifies the $2$--form $d X^\flat$ with the vector field.
On an arbitrarily dimensional manifold $M$,
we can define the {\it generalized curl} just as the $2$--form,
without pulling back to vector fields:
\begin{equation}
{\rm curl} X:= d(X^\flat).
\end{equation}

\begin{remark}
The underlying reason for introducing the generalized curl is
the algebra isomorphism $\bigwedge^2(T^*M) \cong \mathfrak{so}(n)$,
which is the Lie algebra of the special orthogonal group.
Our ${\rm curl}(X)$ is just a field of anti-symmetric matrices
which can be naturally interpreted as rotations,
thanks to the structure of $\mathfrak{so}(n)$.
\end{remark}

Next, we define $\delta: \Omega^q(M) \mapsto \Omega^{q-1}(M)$ to be
the (formal) adjoint operator of $d$ in the following sense:
\begin{equation}
\int_M \delta \alpha \wedge \ast \beta = \int_M \alpha \wedge \ast d\beta \qquad\,\,
\text{ for } \alpha \in \Omega^q(M) \text{ and } \beta \in \Omega^{q-1}(M).
\end{equation}
To emphasize the dependence on metric $g$, it can also be expressed as
\begin{equation}
\int_M \langle \delta \alpha, \beta \rangle\,{\rm d} V_g = \int_M \langle \alpha,d\beta\rangle\,{\rm d}V_g
\qquad \text{ for } \alpha \in \Omega^q(M) \text{ and } \beta \in \Omega^{q-1}(M).
\end{equation}
Equivalently, we can define $\delta:= (-1)^{n(q+1)+1} \ast d\ast$,
where $\ast$ is the Hodge star.
Hence, in the definition of divergence \eqref{eqn_def of divergence by lie derivative},
${\rm div} X$ is simply $\delta X^\flat$ modulo a sign.
Therefore, we always regard $\delta$ as the {\it divergence}
and $d$ as the {\it curl}.

Moreover, the Laplace-Beltrami operator
$\Delta: \Omega^q(M)\mapsto \Omega^q(M)$ on manifold $M$
is defined for each $0\leq q \leq n$ as
\begin{equation}
\Delta:=d\circ\delta+\delta\circ d.
\end{equation}
Denote ${\rm Har}^q(M):=\ker (\Delta)$,
the space of harmonic $q$--forms.
Then $\alpha \in {\rm Har}^q(M)$
if and only if $d\alpha =0$ and $\delta \alpha =0$.

One fundamental result concerning the Laplace-Beltrami operator is the Hodge decomposition theorem
({\it cf.} Warner \cite{War71}):

\begin{theorem}[Hodge Decomposition]\label{thm_hodge}
Let $M$ be a closed, orientable Riemannian manifold.
For each integer $q$ with $0\leq q \leq n$, the following orthogonal decomposition holds{\rm :}
\begin{equation}
\pform = \harmp \bigoplus {\rm Im}(\Delta)= \harmp \bigoplus d\Omega^{q-1}(M) \bigoplus \delta\Omega^{q+1}(M),
\end{equation}
where the orthogonality is taken with respect to the $L^2$ inner product on $\pform$.
Moreover, $\dim_\real (\harmp) =\dim_\real (H_{\rm dR}^q(M;\real))< \infty$ for closed manifolds,
where $H_{\rm dR}^q(M;\real)$ is the $q^{\text{th}}$ {\em de Rham} cohomology group of $M$.
\end{theorem}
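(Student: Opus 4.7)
The plan is to reduce the Hodge decomposition to the elliptic theory of $\Delta = d\delta+\delta d$ on the closed manifold $M$. First I would verify that $\Delta:\pform\mapsto\pform$ is formally self-adjoint and that $\alpha\in\harmp$ if and only if $d\alpha=0$ and $\delta\alpha=0$: self-adjointness is immediate from the pairing defining $\delta$, while the characterization of harmonic forms follows by integration by parts from
\begin{equation*}
\langle\Delta\alpha,\alpha\rangle_{L^2}
=\langle d\delta\alpha,\alpha\rangle_{L^2}+\langle\delta d\alpha,\alpha\rangle_{L^2}
=\|\delta\alpha\|_{L^2}^{2}+\|d\alpha\|_{L^2}^{2}.
\end{equation*}
Next I would compute the principal symbol of $\Delta$ at a nonzero covector $\xi$, which is $-|\xi|_g^{2}\,\mathrm{id}$ on each fiber of $\ptens$, so $\Delta$ is a second-order elliptic operator. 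Since $M$ is compact without boundary, the standard elliptic package (G\aa rding inequality, elliptic regularity, Fredholm alternative; see Warner \cite{War71}) then yields: $\harmp=\ker\Delta$ is finite-dimensional, the range $\mathrm{Im}(\Delta)$ is closed in $L^{2}$, and one has the $L^{2}$-orthogonal splitting $\pform=\harmp\bigoplus\mathrm{Im}(\Delta)$, with smoothness transferring through the regularity theory. This elliptic closed-range step, which crucially uses closedness of $M$, is the principal analytic obstacle; the remainder is algebra.

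Given the two-term decomposition, I would refine $\mathrm{Im}(\Delta)$ as follows. From $\Delta\beta=d(\delta\beta)+\delta(d\beta)$ one has $\mathrm{Im}(\Delta)\subset d\Omega^{q-1}(M)+\delta\Omega^{q+1}(M)$. Conversely, the subspaces $d\Omega^{q-1}(M)$ and $\delta\Omega^{q+1}(M)$ are each $L^{2}$-orthogonal to $\harmp$, since
\begin{equation*}
\langle\alpha,d\gamma\rangle_{L^{2}}=\langle\delta\alpha,\gamma\rangle_{L^{2}}=0,\qquad
\langle\alpha,\delta\eta\rangle_{L^{2}}=\langle d\alpha,\eta\rangle_{L^{2}}=0
\end{equation*}
for any $\alpha\in\harmp$, $\gamma\in\Omega^{q-1}(M)$, $\eta\in\Omega^{q+1}(M)$; they are moreover mutually $L^{2}$-orthogonal via $\langle d\gamma,\delta\eta\rangle_{L^{2}}=\langle d^{2}\gamma,\eta\rangle_{L^{2}}=0$. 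Combining these relations with the splitting from the previous step yields the three-term orthogonal direct sum.

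Finally, to identify $\harmp\cong H^{q}_{\mathrm{dR}}(M;\real)$, I would define $\Phi:\harmp\mapsto H^{q}_{\mathrm{dR}}(M;\real)$ by $\Phi(\alpha):=[\alpha]$, which is well-defined because harmonic forms are closed. For injectivity, if $\alpha\in\harmp$ is exact, say $\alpha=d\gamma$, then $\|\alpha\|_{L^{2}}^{2}=\langle\alpha,d\gamma\rangle_{L^{2}}=\langle\delta\alpha,\gamma\rangle_{L^{2}}=0$, so $\alpha=0$. For surjectivity, given closed $\omega\in\pform$, decompose $\omega=\alpha+d\gamma+\delta\eta$ with $\alpha\in\harmp$; then $0=d\omega=d\delta\eta$, while $\delta(\delta\eta)=0$ since $\delta^{2}=0$, so $\delta\eta\in\harmp$. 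But $\delta\eta$ lies in $\delta\Omega^{q+1}(M)$, which is $L^{2}$-orthogonal to $\harmp$, forcing $\delta\eta=0$. Therefore $[\omega]=[\alpha]$, proving surjectivity, and the finite-dimensionality of $H^{q}_{\mathrm{dR}}(M;\real)$ follows from that of $\harmp$ established in the first paragraph.
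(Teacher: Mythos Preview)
Your proof is correct and follows the standard elliptic-theoretic approach to the Hodge decomposition. Note, however, that the paper does not actually prove this theorem: it is stated as a classical result and attributed to Warner \cite{War71}, with no argument given in the text. Your sketch is essentially the proof one finds in that reference, so there is nothing to compare against.
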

	
In view of Theorem \ref{thm_hodge}, we can define the solution
operator ({\em i.e.}, the Green operator) to the Laplace-Beltrami on closed manifolds.
Let $\pi_H: \pform \mapsto \harmp$ stand for the canonical projection.
Then, for $\alpha \in \pform$, we set
\begin{equation}
G(\alpha)= \Delta^{-1}(\alpha-\pi_H\alpha).
\end{equation}
It can be shown that, if $T$ is a linear operator commuting with $\Delta$,
then it also commutes with $G$.
In particular, $d, \delta, \Delta$, and $\ast$ commute with $G$.
Moreover, $\Delta$ and $G$ are bounded linear operators on the Sobolev spaces.
To be explicit, $\Delta: {W^{k+2,p}}(M;\ptens) \mapsto W^{k,p}(M;\ptens)$
and $G: W^{k,p}(M;\ptens)\mapsto W^{k+2, p}(M;\ptens)$ are continuous
for each $k \in \mathbb{Z}$, $p\in(1,\infty)$, and $0 \leq q\leq n$.

\begin{theorem}\label{thm_main thm goemetric divcurl}
Let $(M,g)$ be an $n$-dimensional Riemannian manifold.
Let
$$
\{\omega^\epsilon\}, \{\tau^\epsilon\} \subset L^2_{\rm loc}(M;\ptens)
$$
be two families of  differential $q$--forms, for $0\leq q\leq n$,
such that
\begin{enumerate}
\item[\rm (i)]
$\omega^\epsilon \rightharpoonup \overline{\omega}$ weakly in $L^2$,
and $\tau^\epsilon \rightharpoonup \overline{\tau}$ weakly in $L^2$;
\item[\rm (ii)] there are compact subsets of the corresponding Sobolev spaces, $K_d$ and $K_\delta$, such that
\begin{equation*}
\begin{cases}
\{d\omega^\epsilon\}\subset K_d \Subset H^{-1}_{\rm loc}(M;\bigwedge^{q+1}T^*M),\\
\{\delta\tau^\epsilon\}\subset K_\delta \Subset H^{-1}_{\rm loc}(M;\bigwedge^{q-1}T^*M).
\end{cases}
\end{equation*}
\end{enumerate}
Then $\langle\omega^\epsilon, \tau^\epsilon\rangle$ converges to $\langle\overline{\omega}, \overline{\tau}\rangle$
in the sense of distributions:
\begin{equation*}
\int_M \langle\omega^\epsilon, \tau^\epsilon\rangle\psi\, {\rm d}V_g
\longrightarrow \int_M \langle\overline{\omega}, \overline{\tau}\rangle \psi\, {\rm d}V_g
\qquad \mbox{for any $\psi \in C^\infty_{c}(M)$}.
\end{equation*}
\end{theorem}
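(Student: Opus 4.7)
The plan is to deduce this geometric div-curl lemma directly from the abstract compensated compactness theorem, Theorem \ref{thm_abstract compensated compactness}, after localizing against the compactly supported test function $\psi$ and passing to a closed Riemannian manifold. I would first fix a cutoff $\phi \in C^\infty_c(M)$ with $\phi \equiv 1$ on $\mathrm{supp}(\psi)$ and $U := \mathrm{supp}(\phi)$ precompact in $M$. Standard constructions---patching coordinate charts via a partition of unity and extending the metric to coincide with a flat metric outside $U$ on a large torus, or taking the double of a precompact smooth domain containing $\bar U$---isometrically embed $U$ as an open submanifold of a closed oriented Riemannian manifold $(\tilde M, \tilde g)$. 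Defining $\tilde\omega^\e := \phi\omega^\e$ and $\tilde\tau^\e := \psi\tau^\e$, extended by zero to $\tilde M$, the identity $\phi\psi \equiv \psi$ gives $\int_M \psi\langle\omega^\e,\tau^\e\rangle\,\dd V_g = \langle\tilde\omega^\e, \tilde\tau^\e\rangle_{L^2(\tilde M)}$, and similarly for the weak limits.

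I would then invoke Theorem \ref{thm_abstract compensated compactness} with $\hil := L^2(\tilde M; \bigwedge^q T^*\tilde M)$, $\htt := H^{-1}(\tilde M; \bigwedge^q T^*\tilde M)$, reflexive target spaces $Y := H^{-1}(\tilde M; \bigwedge^{q+1}T^*\tilde M)$ and $Z := H^{-1}(\tilde M; \bigwedge^{q-1}T^*\tilde M)$, and operators $S := d$ and $T := \delta$. Orthogonality (Op 1) is immediate from $S^\dagger = \delta$, $T^\dagger = d$, and $d \circ d = \delta \circ \delta = 0$. The Hodge decomposition (Theorem \ref{thm_hodge}) on $\tilde M$, the continuity of the Green operator $G$ between the appropriate Sobolev scales, the pairwise $L^2$-orthogonality of the three Hodge summands, and the finite-dimensionality of $\mathrm{Har}^q(\tilde M)$ together yield the elliptic estimate
\begin{equation*}
\|\omega\|_{L^2(\tilde M)} \le C\bigl(\|d\omega\|_{H^{-1}} + \|\delta\omega\|_{H^{-1}} + \|\omega\|_{H^{-1}}\bigr),
\end{equation*}
while the compact embedding $L^2 \Subset H^{-1}$ on the closed manifold is Rellich--Kondrachov; these verify (Op 2).

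The sequential hypotheses (Seq 1)--(Seq 2) follow routinely. Weak convergence $\tilde\omega^\e \rightharpoonup \phi\bar\omega$ and $\tilde\tau^\e \rightharpoonup \psi\bar\tau$ in $\hil$ is inherited from assumption (i) via multiplication by compactly supported smooth functions. For (Seq 2), I would expand $d\tilde\omega^\e = \phi\,d\omega^\e + d\phi \wedge \omega^\e$: the first term is precompact in $H^{-1}(\tilde M)$ because the multiplier $\phi\,\cdot$ maps $H^{-1}_{\rm loc}(M)$ boundedly into $H^{-1}(\tilde M)$ and acts on the precompact family $K_d$, while the second term is bounded in $L^2(\tilde M)$ by the uniform local $L^2$ bound on $\{\omega^\e\}$ inherited from weak convergence, hence precompact in $H^{-1}(\tilde M)$ by Rellich. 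The matching estimate for $\delta\tilde\tau^\e = \psi\,\delta\tau^\e + [\delta,\psi]\tau^\e$ follows by the same argument, since the commutator $[\delta,\psi]$ is a zeroth-order operator.

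Theorem \ref{thm_abstract compensated compactness} then yields $\langle \tilde\omega^\e, \tilde\tau^\e\rangle_{L^2(\tilde M)} \to \langle \phi\bar\omega, \psi\bar\tau\rangle_{L^2(\tilde M)}$, which is precisely the desired distributional convergence against $\psi$. The main obstacle I anticipate is constructing $(\tilde M, \tilde g)$ in a manifestly intrinsic fashion so as not to compromise the coordinate-free spirit of the theorem, together with a clean verification of (Op 2), where one must exploit the full Hodge-theoretic structure of the Laplace--Beltrami operator $\Delta = d\delta + \delta d$ on $\tilde M$ and the continuity of the Green operator between Sobolev scales.
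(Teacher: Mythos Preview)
Your proposal is correct and follows essentially the same approach as the paper: reduce to a closed manifold and apply Theorem~\ref{thm_abstract compensated compactness} with $\hil=L^2$, $Y=Z=H^{-1}$, $S=d$, $T=\delta$, checking (Op~1) via $d^2=\delta^2=0$ and (Op~2) via the Hodge decomposition and elliptic estimates for $\Delta$. The only noteworthy difference is in the localization step: the paper multiplies both sequences by $\sqrt{\psi}$ (after reducing to $\psi\ge 0$), whereas you use two cutoffs $\phi$ and $\psi$ with $\phi\equiv 1$ on $\mathrm{supp}(\psi)$ and verify (Seq~2) through the product-rule expansion of $d(\phi\omega^\e)$ and $\delta(\psi\tau^\e)$---your variant is arguably cleaner, since it avoids the regularity issue for $\sqrt{\psi}$ and makes the passage to a genuinely closed ambient manifold (via doubling or torus extension) explicit rather than implicit.
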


\allowdisplaybreaks
\begin{proof}
First of all, we can reduce the statement of the theorem only for compact manifolds.
Indeed, for any test function $\psi \in C^\infty_c(M)$, it suffices to assume that $\psi \geq 0$.
Otherwise, we may decompose it as
$\psi =\psi^+ - \psi^-$ and approximate $\psi^\pm$ by $C^\infty$ nonnegative functions respectively.
Then, without loss of generality, we can always replace $(\omega^\epsilon,\tau^\epsilon)$
by $(\sqrt{\psi}\omega^\epsilon,\sqrt{\psi}\tau^\epsilon)$,
and replace $M$ by any compact submanifold of $M$ containing the support of $\psi$.
Thus we may drop the test function $\psi$ and the subscripts ``loc'' in the function spaces:
More precisely, it suffices to take $M$ as a closed Riemannian manifold and establish the convergence:
$$
\int_M \langle\omega^\epsilon, \tau^\epsilon\rangle\, {\rm d}V_g
\longrightarrow \int_M \langle\overline{\omega}, \overline{\tau}\rangle \, {\rm d}V_g
\qquad\,\,\mbox{as $\e\to 0$},
$$
under the assumptions that $\{\omega^\epsilon\}, \{\tau^\epsilon\} \subset L^2 (M;\ptens)$,
$\{d\omega^\epsilon\}\subset K_d \Subset H^{-1}(M;\bigwedge^{q+1}T^*M)$, and
$\{\delta\tau^\epsilon\}\subset K_\delta \Subset H^{-1}(M;\bigwedge^{q-1}T^*M)$.

Now we are in the position of applying Theorem \ref{thm_abstract compensated compactness}.
For this purpose, we take $\hil= L^2 (M;\ptens)$, $Y=H^{-1}(M;\bigwedge^{q+1}T^*M)$, $Z=H^{-1}(M;\bigwedge^{q-1}T^*M)$, $\htt=H^{-1}(M;\ptens)$,
$S=d$, and $T=\delta$.
In this setting, $\{\omega^\epsilon\}$ and $\{\tau^\epsilon\}$  play the role of $\{u^\e\}$ and $\{v^\e\}$, respectively.
Conditions (Seq 1)--(Seq 2) of Theorem \ref{thm_abstract compensated compactness}
correspond precisely to conditions (i)--(ii), and condition (Op 1) of Theorem \ref{thm_abstract compensated compactness}
is verified by the cohomology chain condition $d\circ d=0$ and $\delta\circ\delta=0$.

Thus, we are left with checking the estimate in (Op 2), {\em i.e.},
for any $\alpha \in L^2(M;\ptens)$, there exits a constant $C>0$ such that
\begin{align}\label{check Op 2}
&\|\alpha\|_{L^2(M;\ptens)}\nonumber\\
&\leq\; C\Big(\|d\alpha\|_{H^{-1}(M;\bigwedge^{q+1}T^*M)}
+\|\delta\alpha\|_{H^{-1}(M;\bigwedge^{q-1}T^*M)}+\|\alpha\|_{H^{-1}(M;\bigwedge^{q}T^*M)}\Big).
\end{align}
To this end, we rely crucially on the Hodge decomposition theorem, Theorem \ref{thm_hodge},
as well as the standard elliptic estimate for the Laplace-Beltrami operator in the following form:
\begin{equation}\label{elliptic}
\|\omega\|_{L^2(M;\ptens)} \leq C\Big(\|\Delta\omega\|_{H^{-2}(M;\ptens)} + \|\omega\|_{H^{-2}(M;\ptens)}\Big)
\end{equation}
for arbitrary $\omega\in L^2(M;\ptens)$.

Now let us prove \eqref{check Op 2}. Given $\alpha \in L^2(M;\ptens)$,  we define $\beta:=G\alpha$,
which is equivalent to the decomposition: $\alpha=\pi_H \alpha + \Delta \beta$.
Applying the elliptic estimate \eqref{elliptic} to $\omega:=\pi_H \alpha$, we have
\begin{align*}
\|\pi_H \alpha\|_{L^2(M;\ptens)} \leq C\|\pi_H\alpha\|_{H^{-2}(M;\ptens)},
\end{align*}
where $C>0$ is a constant, independent of $\alpha$. As $H^{-2}(M;\ptens)$ is a Hilbert space and $\pi_H$ is a projection,
the Pythagorean law gives that $\|\pi_H \alpha\|_{H^{-2}(M;\ptens)} \leq \|\alpha\|_{H^{-2}(M;\ptens)}$.
Thus, in view of the compact embedding: $H^{-1}(M;\ptens)\hookrightarrow{H^{-2}(M;\ptens)}$ due to the Rellich lemma,
we conclude
\begin{equation}\label{1}
\|\pi_H \alpha\|_{L^2(M;\ptens)} \leq C\|\alpha\|_{H^{-1}(M;\ptens)}.
\end{equation}

Finally, we can bound $\Delta\beta$ in $L^2$. The bound follows from the following estimates:
\begin{align}\label{2}
&\|\Delta\beta\|_{L^2(M;\ptens)}\nonumber\\
&=\:  \|\Delta(G\alpha)\|_{L^2(M;\ptens)}\nonumber \\
&=\: \|G(\Delta\alpha)\|_{L^2(M;\ptens)} \nonumber\\
&\leq\:  C\Big(\|\Delta\alpha\|_{H^{-2}(M;\ptens)} + \|G(\Delta\alpha)\|_{H^{-2}(M;\ptens)} \Big)\nonumber\\
&\leq\: C\Big(\|(d\delta+\delta d)\alpha\|_{H^{-2}(M;\ptens)} + \|\alpha-\pi_H(\alpha)\|_{H^{-2}(M;\ptens)} \Big)\nonumber\\
&\leq\: C\Big(\|\delta\alpha\|_{H^{-1}(M;\bigwedge^{q-1}T^*M)}+\|d\alpha\|_{H^{-1}(M;\bigwedge^{q+1}T^*M)}+\|\alpha\|_{H^{-2}(M;\ptens)} \Big)\nonumber\\
&\leq\: C\Big(\|\delta\alpha\|_{H^{-1}(M;\bigwedge^{q-1}T^*M)}+\|d\alpha\|_{H^{-1}(M;\bigwedge^{q+1}T^*M)}
+\|\alpha\|_{H^{-1}(M;\bigwedge^{q}T^*M)}\Big),
\end{align}
where we have used the commutativity of $G$ and $\Delta$ in the third line, the elliptic estimate
in the fourth line with $\omega=G\alpha$ in \eqref{elliptic},
the definition of $\Delta$ and $G$ in the fifth line,
the Pythagorean theorem in the Hilbert space $H^{-2}(M;\ptens)$
in the sixth line, and the Rellich lemma in the final line.
Therefore, the estimate in \eqref{check Op 2}
has been obtained, and the proof is now complete.
\end{proof}

To conclude this section, we point out that some connections between the Hodge decomposition theorem
and compensated compactness have been observed by Robbin-Rogers-Temple in \cite{RRT87}
(also {\it cf.} Tartar \cite{Tar79}), and some generalized versions of the
div-curl lemma have been obtained by Kozono-Yanagisawa \cite{KY09}--\cite{KY13}, among others.
Our general functional-analytic compensated compactness theorem,
Theorem \ref{thm_abstract compensated compactness},
further provides such a connection in the abstract form.
In particular, our proof is essentially based upon the analysis
of operator $\sd:\yzstar\mapsto \yz$,
which is an analogue of the Laplace-Beltrami operator $\Delta$.

In fact,
a global intrinsic div-curl lemma,
more general than Theorem \ref{thm_main thm goemetric divcurl}, on Riemannian manifolds
can also be established,
which applies for the two sequences $\{\omega^\epsilon\}$ and $\{\tau^\epsilon\}$
lying in $L^r_{\text{\rm loc}}$ and $L^s_{\text{\rm loc}}$, where $1<r,s<\infty$ and $\frac{1}{r}+\frac{1}{s}=1$.

\begin{theorem}\label{thm_general goemetric divcurl}
Let $(M,g)$ be an $n$-dimensional Riemannian manifold. Let $\{\omega^\epsilon\} \subset L^r_{\rm loc}(M;\ptens)$
and $\{\tau^\epsilon\} \subset L^s_{\rm loc}(M;\ptens)$ be two families
of differential $q$--forms, for $0\leq q\leq n$, $1<r,s<\infty$, and $\frac{1}{r}+\frac{1}{s}=1$.
Suppose that
\begin{enumerate}
\item[\rm (i)]
$\omega^\epsilon \rightharpoonup \overline{\omega}$ weakly in $L^r$,
and $\tau^\epsilon \rightharpoonup \overline{\tau}$ weakly in $L^s$ as $\e\to 0${\rm ;}

\item[\rm (ii)] There are compact subsets of the corresponding Sobolev spaces, $K_d$ and $K_\delta$, such that
\begin{equation*}
\begin{cases}
\{d\omega^\epsilon\}\subset K_d \Subset W^{-1,r}_{\rm loc}(M;\bigwedge^{p+1}T^*M),\\[1mm]
\{\delta\tau^\epsilon\}\subset K_\delta \Subset W^{-1,s}_{\rm loc}(M;\bigwedge^{q-1}T^*M).
\end{cases}
\end{equation*}
\end{enumerate}
Then $\langle\omega^\epsilon, \tau^\epsilon\rangle$ converges to $\langle\overline{\omega}, \overline{\tau}\rangle$
in the sense of distributions{\rm :} For any $\psi \in C^\infty_{c}(M)$,
\begin{equation*}
\int_M \langle\omega^\epsilon, \tau^\epsilon\rangle\psi\, {\rm d}V_g
\longrightarrow \int_M \langle\overline{\omega}, \overline{\tau}\rangle \psi\, {\rm d}V_g
\qquad \mbox{as $\e\to 0$}.
\end{equation*}
\end{theorem}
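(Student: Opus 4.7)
The plan is to mimic the proof of Theorem \ref{thm_main thm goemetric divcurl}, replacing the $L^2$ self-duality by the $L^r$--$L^s$ duality pairing. The abstract Theorem \ref{thm_abstract compensated compactness} does not apply directly, since it requires the underlying space $\mathcal{H}$ to be a Hilbert space, which is unavailable when $r \neq 2$; nevertheless, the same structural ingredients---the Hodge decomposition, the orthogonality $d^2 = \delta^2 = 0$, and elliptic regularity for the Hodge Laplacian---will drive the argument. First I would reduce to the closed-manifold case: for a nonnegative test function $\psi \in C^\infty_c(M)$, pick a smooth cutoff $\chi \in C^\infty_c(M)$ with $\chi \equiv 1$ on $\text{supp}(\psi)$, replace $(\omega^\epsilon, \tau^\epsilon)$ by $(\chi\omega^\epsilon, \chi\tau^\epsilon)$, and extend by zero onto a closed manifold $\tilde{M}$ containing the compactly supported data. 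The commutator identity $d(\chi\omega^\epsilon) = d\chi \wedge \omega^\epsilon + \chi\, d\omega^\epsilon$ shows that $d\chi \wedge \omega^\epsilon$ is bounded in $L^r$, hence pre-compact in $W^{-1,r}$ by the Rellich lemma, so the hypothesis on $\{d\omega^\epsilon\}$ survives the localization; the dual argument handles $\{\delta\tau^\epsilon\}$.

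On the resulting closed manifold I invoke the $L^r$ Hodge decomposition, coming from the $L^r$ continuity of the Green operator $G$ for the Hodge Laplacian:
\begin{equation*}
\omega^\epsilon = \pi_H \omega^\epsilon + d\alpha^\epsilon + \delta\beta^\epsilon, \qquad \tau^\epsilon = \pi_H \tau^\epsilon + d\gamma^\epsilon + \delta\zeta^\epsilon,
\end{equation*}
with $\alpha^\epsilon, \beta^\epsilon$ bounded in $W^{1,r}$ and $\gamma^\epsilon, \zeta^\epsilon$ bounded in $W^{1,s}$. Substituting into $\int_M \langle \omega^\epsilon, \tau^\epsilon\rangle\,{\rm d}V_g$ and repeatedly integrating by parts with $d\pi_H = \delta\pi_H = 0$ and $d^2 = \delta^2 = 0$ kills six of the nine cross terms, leaving
\begin{equation*}
\int_M \langle \omega^\epsilon, \tau^\epsilon\rangle\,{\rm d}V_g = \int_M \langle \pi_H\omega^\epsilon, \pi_H\tau^\epsilon\rangle\,{\rm d}V_g + \int_M \langle \alpha^\epsilon, \delta\tau^\epsilon\rangle\,{\rm d}V_g + \int_M \langle d\omega^\epsilon, \zeta^\epsilon\rangle\,{\rm d}V_g,
\end{equation*}
where I used $\delta\tau^\epsilon = \delta d\gamma^\epsilon$ and $d\omega^\epsilon = d\delta\beta^\epsilon$. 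The first term converges since $\harmp$ is finite-dimensional by Theorem \ref{thm_hodge}, so weak convergence of the harmonic projection is automatically strong. The second converges by weak--strong pairing: $\alpha^\epsilon \rightharpoonup \bar\alpha$ weakly in $W^{1,r}$, $\{\delta\tau^\epsilon\}$ is strongly pre-compact in $W^{-1,s}$, and the duality pairing $W^{1,r} \times W^{-1,s} \to \mathbb{R}$ is jointly continuous. The third term is handled symmetrically.

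The main obstacle is securing the $L^r$ Hodge decomposition together with the quantitative $W^{1,r}$ bounds on the potentials $\alpha^\epsilon, \beta^\epsilon$ (and the $W^{1,s}$ bounds on $\gamma^\epsilon, \zeta^\epsilon$). In the $L^2$ setting of Theorem \ref{thm_main thm goemetric divcurl}, Hilbert-space orthogonality together with the elementary elliptic estimate \eqref{elliptic} sufficed; for $1 < r \neq 2$ one must instead invoke Calder\'on--Zygmund-type $L^r$ regularity for the Hodge Laplacian on a closed Riemannian manifold, which guarantees that $G : L^r \to W^{2,r}$ is bounded. Once this analytic ingredient is in place, the remainder is a direct transcription of the $L^2$ proof, with $L^r$--$L^s$ duality replacing self-duality.
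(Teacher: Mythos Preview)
Your approach is essentially the same as the paper's: reduce to a closed manifold, apply the $L^r$ Hodge decomposition via Calder\'on--Zygmund regularity of the Green operator $G:L^r\to W^{2,r}$, and pass to the limit by pairing strongly convergent pieces against weakly convergent ones. The paper keeps the test function $\psi$ throughout, expands $\langle\omega^\epsilon,\tau^\epsilon\rangle\psi$ into nine terms, shows eight of them are strong--weak, and handles the remaining term $\int\langle d\delta\alpha^\epsilon,\delta d\beta^\epsilon\rangle\psi\,{\rm d}V_g$ by one further integration by parts (the commutator with $d\psi$ being again a strong--weak product). Your reduction to three terms by integrating by parts on the closed manifold is tidier, but it works only when $\psi$ is absent: with $\psi$ present, each integration by parts produces a commutator with $d\psi$, and the six cancellations you claim do not occur.

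Your localization does not remove $\psi$. Replacing $(\omega^\epsilon,\tau^\epsilon)$ by $(\chi\omega^\epsilon,\chi\tau^\epsilon)$ with $\chi\equiv 1$ on $\text{supp}(\psi)$ makes the forms compactly supported, but $\int\langle\chi\omega^\epsilon,\chi\tau^\epsilon\rangle\psi\,{\rm d}V_g$ still carries $\psi$. The fix is immediate: absorb $\psi$ into one of the forms, e.g.\ replace $\omega^\epsilon$ by $\psi\omega^\epsilon$. Then $\psi\omega^\epsilon\rightharpoonup\psi\bar\omega$ in $L^r$, and $d(\psi\omega^\epsilon)=d\psi\wedge\omega^\epsilon+\psi\,d\omega^\epsilon$ remains precompact in $W^{-1,r}$ since the first summand is bounded in $L^r$ and hence precompact in $W^{-1,r}$ by Rellich on the closed manifold. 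After this reduction it suffices to prove $\int_{\tilde M}\langle\omega^\epsilon,\tau^\epsilon\rangle\,{\rm d}V_g\to\int_{\tilde M}\langle\bar\omega,\bar\tau\rangle\,{\rm d}V_g$ with no test function, and then your three-term argument goes through verbatim.
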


To make this paper self-contained, we will present its proof
in the appendix.

\section{Global Weak Rigidity of the Gauss-Codazzi-Ricci Equations on Riemannian Manifolds}\label{section_GCR}

In this section, we establish the global weak rigidity of the GCR equations on Riemannian manifolds,
independent of the local coordinates.

\subsection{Global Weak Rigidity Theorem}

Our main result in this section is the following:

\begin{theorem}[Global Weak Rigidity of the GCR Equations]\label{thm_main theorem weak continuity}
Let $(M,g)$	be an $n$-dimensional manifold with $W^{1,p}$ metric for $p > 2$.
Let a sequence of operators $\{(\bep,\nep)\}$ satisfy
\begin{enumerate}
\item[\rm (i)] The tensor fields $\bep: \vf \times \vf \mapsto \vfn$
and the affine connections $\nep: \vf \times \vfn \mapsto \vfn$ are uniformly
bounded in $L^p_{\rm loc}$ with
	\begin{equation}
	\sup_{\epsilon>0} \big\{\|\bep\|_{L^p(K)} + \|\nep\|_{L^p(K)} \big\} \leq C_K
	\end{equation}
for a constant $C_K$ on any  $K\Subset M$ compact subsets, independent of $\epsilon${\rm ;}
\item[\rm (ii)]
$(\bep,\nep)$ are solutions of the GCR equations in the distributional sense{\rm :}
For any $X,Y,Z,W \in \vf$ and $\eta,\xi\in\vfn$,
\begin{eqnarray}
&& \langle B^\epsilon(X,W), B^\epsilon(Y,Z)\rangle -\langle B^\epsilon(Y,W), B^\epsilon(X,Z) \rangle =-R(X,Y,Z,W),
\label{eqn_O1 epsilon}\\
&& \langle [S^\epsilon_\eta, S^\epsilon_\xi]X, Y\rangle = R^\perp (X,Y,\eta,\xi),\label{eqn_O2 epsilon}\\
&&\bar{\na}_Y B^\epsilon(X,Z)-\bar{\na}_X B^\epsilon(Y,Z) = 0 \label{eqn_O3 epsilon}
\end{eqnarray}
in the distributional sense,
where
$S^\epsilon$ is the shape operator corresponding to $B^\epsilon$
{\rm (}Note that $\nep$ is implicit in the above equations{\rm )}.
\end{enumerate}
Then, after passing to a subsequence, $\{(\bep,\nep)\}$ converges weakly in $L^p$ to a pair $(B,\nap)$ that is still
a weak solution
of the GCR equations \eqref{2.14a}--\eqref{2.16a}.
\end{theorem}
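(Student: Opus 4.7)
The plan is to combine weak $L^p$ compactness with the intrinsic div-curl lemma, Theorem \ref{thm_general goemetric divcurl}, in order to pass to the distributional limit in each of the three GCR equations. By hypothesis (i) and the reflexivity of $L^p$ for $p>2$, a diagonal argument over an exhaustion of $M$ by compact subsets extracts a subsequence (still indexed by $\e$) and limits $B \in L^p_{\rm loc}$ and $\nap$ such that $\bep \rightharpoonup B$ and $\nep \rightharpoonup \nap$ weakly in $L^p_{\rm loc}$.

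The principal parts of the reduced Codazzi equation \eqref{eqn_reduced codazzi} and the reduced Ricci equation \eqref{eqn_reduced ricci} are linear in first derivatives of $\bep$ and $\nep$, and therefore pass to their distributional limits automatically. The only genuine obstruction is the quadratic nonlinearities: the left-hand side of Gauss, namely $\langle \bep(X,W), \bep(Y,Z)\rangle - \langle \bep(Y,W), \bep(X,Z)\rangle$, the $\bep \otimes \nep$ lower-order terms in \eqref{eqn_reduced codazzi}, and the terms $\nep \otimes \nep$ and $\bep \otimes \nep$ on the right of \eqref{eqn_reduced ricci}.

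The key idea is to realize each such quadratic expression (for arbitrary smooth tangential and normal test fields) as a bilinear pairing of differential forms built intrinsically from $\bep$ and $\nep$, to which Theorem \ref{thm_general goemetric divcurl} applies. For the Gauss nonlinearity, a natural choice is the pair of $1$-forms $\omega^\e_{Z,\eta} : X \mapsto \bep(X,Z,\eta)$ and $\tau^\e_{W,\eta} : X \mapsto \bep(X,W,\eta)$, summed over $\eta$ in a local orthonormal frame of $TM^\perp$; the Codazzi equation \eqref{eqn_O3 epsilon} then yields, modulo lower-order quadratic corrections, the desired $L^{p/2}_{\rm loc}$ control of $d\omega^\e_{Z,\eta}$. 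The normal connection $1$-form $\langle \nep_\cdot \xi, \eta\rangle$ plays the analogous role for the Ricci and Codazzi nonlinearities, with the Ricci equation \eqref{eqn_O2 epsilon} providing $L^{p/2}_{\rm loc}$ control of its exterior derivative. In each case, hypothesis (i) gives uniform $L^p_{\rm loc}$ bounds on the forms themselves, and Rellich--Kondrachov places $\{d\omega^\e\}$ and $\{\delta\tau^\e\}$ in compact subsets of appropriate negative Sobolev spaces for $p>2$; Theorem \ref{thm_general goemetric divcurl} then yields the required distributional convergence of the quadratic products. Testing against arbitrary $X,Y,Z,W \in \vf$ and $\eta,\xi \in \vfn$ shows that $(B, \nap)$ satisfies \eqref{2.14a}--\eqref{2.16a} in the distributional sense.

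The main anticipated obstacle is the intrinsic, coordinate-free identification of the GCR nonlinearities as inner products (or wedge products) of honest differential forms on $M$, in contrast with the local-coordinate arguments of \cite{CSW10,CSW10-CMP}. In particular, the normal connection $\nep$ is not a tensor, so the connection $1$-forms must be handled as bundle-valued objects (or as the difference with a fixed reference connection), and the Codazzi/Ricci equations must be re-read in that language so as to furnish the required compactness of $d$ and $\delta$. Once this intrinsic div-curl structure is set up, the remaining argument is a direct application of Theorem \ref{thm_general goemetric divcurl}, mirroring the passage to the limit via compensated compactness in \cite{CSW10}.
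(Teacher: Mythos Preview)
Your overall strategy is the paper's, and the $1$-forms you single out are exactly the paper's $\Omega^{(B^\e)}_{Z,\eta}$ and $\Omega^{(\nabla^{\perp,\e})}_{\xi,\eta}$. But there is a real gap in the step where you write ``Rellich--Kondrachov places $\{d\omega^\e\}$ and $\{\delta\tau^\e\}$ in compact subsets''. You have explained only the $d$-side: Codazzi controls $d\omega^\e_{Z,\eta}$ and Ricci controls $d$ of the connection $1$-forms. You have not explained why $\delta\tau^\e_{W,\eta}$ should be compact. Since $\tau^\e_{W,\eta}$ is of the same type as $\omega^\e_{Z,\eta}$, the GCR system again gives you $d\tau^\e$, not $\delta\tau^\e$; the latter is a \emph{contracted} first derivative of $B^\e$, and nothing in the GCR equations bounds it. Relatedly, the Gauss nonlinearity at fixed $X,Y$ is the $2$-form $(\omega^\e_{Z,\eta}\wedge\tau^\e_{W,\eta})(X,Y)$, not the scalar inner product $\langle\omega^\e_{Z,\eta},\tau^\e_{W,\eta}\rangle$ to which Theorem \ref{thm_general goemetric divcurl} applies.

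The paper closes exactly this gap via Lemma \ref{lemma:4.1}. For each pair of test fields $X,Y$ it introduces the auxiliary \emph{vector fields}
\[
V^{(B^\e)}_{Z,\eta}(X,Y)=B^\e(X,Z,\eta)Y-B^\e(Y,Z,\eta)X,\qquad
V^{(\nabla^{\perp,\e})}_{\xi,\eta}(X,Y)=\langle\nep_Y\xi,\eta\rangle X-\langle\nep_X\xi,\eta\rangle Y,
\]
and computes their divergences explicitly, showing that $\mathrm{div}\,V^{(B^\e)}_{Z,\eta}(X,Y)$ and $\mathrm{div}\,V^{(\nabla^{\perp,\e})}_{\xi,\eta}(X,Y)$ agree with $d\Omega^{(B^\e)}_{Z,\eta}(X,Y)$ and $d\Omega^{(\nabla^{\perp,\e})}_{\xi,\eta}(X,Y)$ up to zeroth-order terms linear in $(B^\e,\nep)$ (hence $L^p_{\rm loc}$-bounded). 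Thus the already-established $L^{p/2}_{\rm loc}$ bounds on $d\Omega^\e$ transfer to $\delta V^\e$, and the quadratic nonlinearities are recast (Lemma \ref{lemma:4.2}) as genuine inner products $\langle V^\e,\Omega^\e\rangle$ of $1$-forms with the required $d$/$\delta$ compactness. This algebraic device, building the $(X,Y)$-antisymmetry into a vector field whose divergence is controllable, is the missing ingredient in your proposal; once it is in place, the rest of your outline goes through verbatim using Theorem \ref{thm_main thm goemetric divcurl} and the interpolation argument you sketch.
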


This result can be regarded as a global version on Riemannian manifolds of Theorem 3.3 in Chen-Slemrod-Wang \cite{CSW10}.
In this paper, both the statement and the proof for this weak rigidity theorem are global, intrinsic,
independent of the local coordinates of
the Riemannian manifolds, which offers further geometric insights into the GCR equations.

\begin{remark}
Theorem {\rm \ref{thm_main theorem weak continuity}} for the exact solutions
can be extended to the weak rigidity of approximate solutions $(\bep,\nep)$ of the GCR equations.
More precisely, instead of \eqref{eqn_O1 epsilon}--\eqref{eqn_O3 epsilon} in {\rm (ii)}, let $(\bep,\nep)$
solve the following approximate GCR equations in the distributional sense{\rm :}
For any $X,Y,Z,W \in \vf$ and $\eta,\xi\in\vfn$,
\begin{eqnarray}
&& \langle B^\epsilon(X,W), B^\epsilon(Y,Z)\rangle -\langle B^\epsilon(Y,W), B^\epsilon(X,Z) \rangle  +R(X,Y,Z,W)
  =O_1^\epsilon, \label{eqn_O1 epsilon-a}\\
&& \langle [S^\epsilon_\eta, S^\epsilon_\xi]X, Y\rangle - R^\perp (X,Y,\eta,\xi)=O_2^\epsilon,\label{eqn_O2 epsilon-a}\\
&&\bar{\na}_Y B^\epsilon(X,Z) - \bar{\na}_X B^\epsilon(Y,Z) = O_3^\epsilon, \label{eqn_O3 epsilon-a}
\end{eqnarray}
such that $\lim_{\epsilon \rightarrow 0} O_i^\epsilon=0$ in $W^{-1, r}_{\rm loc}$ for some $r>1$.
Then, after passing to a subsequence, $\{(\bep,\nep)\}$ converges weakly in $L^p$ to a pair $(B,\nap)$ that is
a weak solution of the GCR equations \eqref{2.14a}--\eqref{2.16a}.
\end{remark}

\subsection{First Formulation: Identification
of the Tensor Fields with Special Div-Curl Structure on Riemannian Manifolds}
	
Now we begin the proof of Theorem \ref{thm_main theorem weak continuity}.
First we seek tensor fields with special {\it div-curl structure} for the GCR equations on manifolds.
	
Recall our previous convention:
$X,Y,Z,\dots\in\vf$ and $\xi,\eta,\beta,\dots\in\vfn$.
For each fixed $(Z,\eta,\xi)$, define the tensor
fields $\vb_{Z,\eta}, \vnab_{\xi,\eta}: \vf \times \vf \mapsto \vf$
and $1$--forms $\ob_{Z,\eta}, \onab_{\xi,\eta} \in \Omega^1(M)=\Gamma(T^*M)$ by
\begin{eqnarray}
&&\vb_{Z,\eta}(X,Y):= B(X,Z,\eta)Y-B(Y,Z,\eta)X, \label{def/eqn_v b}\\
&&\vnab_{\xi,\eta}(X,Y):=\langle \nap_Y \xi, \eta\rangle X - \langle \nap_X \xi, \eta\rangle Y,\label{def/eqn_v nabla}\\
&&\ob_{Z,\eta}:= -B(\bullet, Z, \eta), \label{def/eqn_omega b}\\
&&\onab_{\xi,\eta}:= \langle \nap_{\bullet} \xi, \eta\rangle.\label{def/eqn_omega nabla}
\end{eqnarray}
To avoid further notations, we denote the vector fields canonically isomorphic
to $\ob_{Z,\eta}$ and $\onab_{\xi,\eta}$ (via $\sharp$ and $\flat$)
by the same symbols.
	
Our geometric picture is as follows:
The $V$--tensors take two tangential vector fields $(X,Y)$ to a vector field spanned
by $X$ and $Y$, which are anti-symmetric in $(X,Y)$.
Thus, $\vb_{Z,\eta}(X,Y)$ and $\vnab_{\xi,\eta}(X,Y)$ are precisely the {\it rate of rotations}
of $\ob_{Z,\eta}$ and $\onab_{\xi,\eta}$ in the $2$--planes generated by $(X,Y)$.

The $1$--forms $(\ob,\onab)$ are simply contractions of $(B,\nap)$, and the tensors $(V^{(B)}, V^{(\na^\perp)})$
can be obtained by applying the $\Omega$--tensors to the $2$--Grassmannian of  $TM$.

Our first formulation concerns the {\it divergence} of $V$
in Eqs. \eqref{def/eqn_v b}--\eqref{def/eqn_v nabla}
and the {\it curl} (as $2$-forms) of $\Omega$ in
Eqs. \eqref{def/eqn_omega b}--\eqref{def/eqn_omega nabla}:

\begin{lemma}[First Formulation]\label{lemma:4.1} The divergence of $V$ and the curl of $\Omega$
can be reformulated as
\begin{eqnarray}
&&{\rm div}\big(\vb_{Z,\eta}(X,Y)\big)
  = Y B(X,Z,\eta) - X B(Y,Z,\eta)+B(X,Z,\eta){\rm div}Y-B(Y,Z,\eta){\rm div X},\qquad\quad
  \label{eqn_div VB}\\
&&{\rm div}\big(\vnab_{\xi,\eta}(X,Y)\big)
 =-Y\langle \nap_X\xi,\eta\rangle +X\langle \nap_Y\xi,\eta\rangle
   +\langle\nap_Y\xi,\eta\rangle{\rm div}X-\langle\nap_X\xi,\eta\rangle{\rm div}Y,
    \label{eqn_div V NABLA}\\
&& d\big(\ob_{Z,\eta}\big)(X,Y) = YB(X,Z,\eta) - XB(Y,Z,\eta) + B([X,Y],Z,\eta),\label{eqn_curl Omega B}\\
&& d \big( \onab_{\xi,\eta}\big)(X,Y) =  -Y\langle \nap_X\xi,\eta\rangle
+X\langle \nap_Y\xi,\eta\rangle - \langle \nap_{[X,Y]}\xi,\eta\rangle.\label{eqn_curl Omega NABLA}
\end{eqnarray}
\end{lemma}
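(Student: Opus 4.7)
The plan is to obtain all four identities by direct computation, using two standard facts: the Leibniz-type rule for the divergence of a product $\mathrm{div}(fY) = f\,\mathrm{div}Y + Y(f)$ for a scalar $f$ and vector field $Y$, and the invariant formula for the exterior derivative of a $1$-form, namely $d\omega(X,Y) = X\omega(Y) - Y\omega(X) - \omega([X,Y])$. Both identities are standard on Riemannian manifolds and require only $W^{1,p}$ regularity of the metric in the distributional sense, which is consistent with the hypotheses of Theorem \ref{thm_main theorem weak continuity}.

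For the first divergence identity \eqref{eqn_div VB}, I would write
\[
V^{(B)}_{Z,\eta}(X,Y) = B(X,Z,\eta)\,Y - B(Y,Z,\eta)\,X,
\]
treat $B(X,Z,\eta)$ and $B(Y,Z,\eta)$ as scalar fields on $M$, and apply the Leibniz rule to each summand separately. The terms of the form $Y(B(X,Z,\eta))$ and $X(B(Y,Z,\eta))$ reproduce the directional derivative terms in \eqref{eqn_div VB}, while the terms $B(X,Z,\eta)\,\mathrm{div}Y$ and $B(Y,Z,\eta)\,\mathrm{div}X$ produce the remaining pieces. The identity \eqref{eqn_div V NABLA} follows by exactly the same mechanism, with the scalar coefficients $\langle\nabla^\perp_X\xi,\eta\rangle$ and $\langle\nabla^\perp_Y\xi,\eta\rangle$ replacing $B(X,Z,\eta)$ and $B(Y,Z,\eta)$.

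For the curl identities, since $\Omega^{(B)}_{Z,\eta}$ and $\Omega^{(\nabla^\perp)}_{\xi,\eta}$ are $1$-forms by construction (see \eqref{def/eqn_omega b}--\eqref{def/eqn_omega nabla}), I would apply the invariant $d$-formula directly: $\Omega^{(B)}_{Z,\eta}(X) = -B(X,Z,\eta)$ gives
\[
d\Omega^{(B)}_{Z,\eta}(X,Y) = -XB(Y,Z,\eta) + YB(X,Z,\eta) + B([X,Y],Z,\eta),
\]
which is \eqref{eqn_curl Omega B}. The analogous substitution $\Omega^{(\nabla^\perp)}_{\xi,\eta}(X) = \langle\nabla^\perp_X\xi,\eta\rangle$ yields \eqref{eqn_curl Omega NABLA}.

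There is no real obstacle: the lemma is essentially a bookkeeping statement identifying the div--curl structure, and the proof amounts to a single application of each of the two invariant formulas above. The only point that requires some care is to ensure that the tensorial identities are valid in the distributional sense under the $W^{1,p}$ regularity of the metric and the $L^p$ regularity of $(B,\nabla^\perp)$, so that the scalar products $B(X,Z,\eta)$ and $\langle\nabla^\perp_X\xi,\eta\rangle$ belong to $L^p$ and their derivatives along $X,Y$ are well defined distributionally; this is where the hypothesis $p>2$ (and later the full use of the div-curl lemma in Theorem \ref{thm_main thm goemetric divcurl}) will become essential in the subsequent steps of the rigidity argument, but not in the present lemma.
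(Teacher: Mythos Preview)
Your proposal is correct and follows essentially the same approach as the paper: for the curl identities you use exactly the invariant formula $d\alpha(X,Y)=X\alpha(Y)-Y\alpha(X)-\alpha([X,Y])$ that the paper invokes, and for the divergence identities you cite the Leibniz rule $\mathrm{div}(fY)=f\,\mathrm{div}Y+Y(f)$ directly, whereas the paper re-derives that same rule from Cartan's formula $\mathcal{L}_X=d\iota_X+\iota_X d$ and a short local-coordinate computation. The content is identical; your version is just slightly more streamlined by taking the Leibniz rule as known.
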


\begin{proof}
To show the first two identities,
we use Eq.\,\eqref{eqn_def of divergence by lie derivative}
to express
the divergence in terms of the Lie derivative, which is further computed
from Cartan's formula:
\begin{equation}\label{cartan's formula, Lx=di+id}
\lie_X=d\circ \iota_X + \iota_X \circ d,
\end{equation}
where $\iota$ is the interior multiplication.
Indeed, in local coordinates, we write $X=X^i\p_i$, $Y=Y^j\p_j$,
and $dV_g = dx^1 \wedge\ldots\wedge dx^n$. Then
\begin{equation*}
\begin{cases}
\iota_X \, dV_g = (-1)^i X^i dx^1\wedge\ldots\wedge \widehat{dx^i}\wedge \ldots\wedge dx^n,\\
\iota_Y \, dV_g = (-1)^j Y^j dx^1\wedge\ldots\wedge \widehat{dx^j}\wedge \ldots\wedge dx^n,
\end{cases}
\end{equation*}
where $\widehat{\cdots}$ denotes the omission of the corresponding term. As a result,
\begin{equation}\label{add: div 1}
\ast \big(d ( \iota_X \, dV_g)\big) = {\rm div}\, X,\qquad\,\, \ast \big(d ( \iota_Y \, dV_g)\big)
= {\rm div}\,Y,
\end{equation}
as $\p_i X^i = {\rm div}\, X$ and $\ast dV_g = 1$.
On the other hand, for any $f:M\rightarrow \R$, we have
\begin{equation}\label{add: div 2}
\begin{cases}
\ast \big(df \wedge (\iota_Y \, dV_g)\big) = \ast (\p_j f Y^j dV_g) = Yf,\\
\ast \big(df \wedge (\iota_X \, dV_g)\big) = \ast (\p_i f X^i dV_g) = Xf,
\end{cases}
\end{equation}
where the vector fields $X,Y$ are identified with the directional derivatives.
Therefore, we conclude
\begin{align*}
{\rm div}\, V^{(B)}_{Z,\eta}(X,Y) &= \ast \big(\mathcal{L}_{V^{(B)}_{Z,\eta}(X,Y)} dV_g\big)\\
&= \ast \big(d \iota_{V^{(B)}_{Z,\eta}(X,Y)} \, dV_g \big) \\
&= \ast \big\{d\big(B(X,Z,\eta)\iota_Y\, dV_g - B(Y,Z,\eta)\iota_X \,dV_g\big) \big\}\\
&= \ast \big\{d\big(B(X,Z,\eta)\wedge (\iota_Y dV_g)\big)\big\} + B(X,Z,\eta) \ast \big(d (\iota_Y \, dV_g)\big) \\
&\qquad- \ast \big\{d\big(B(Y,Z,\eta)\wedge (\iota_X dV_g)\big)\big\} - B(Y,Z,\eta) \ast \big(d (\iota_X \, dV_g)\big)\\
&= Y B(X,Z,\eta) - X B(Y,Z,\eta)+B(X,Z,\eta){\rm div}\,Y-B(Y,Z,\eta){\rm div}\, X.
\end{align*}
We use Cartan's formula \eqref{cartan's formula, Lx=di+id} in the first line,
$d(dV_g)=0$ in the second,  the definition of $V^{(B)}$ in the third,
the definition of $d$ in the fourth, and  we apply Eqs.\,\eqref{add: div 1}--\eqref{add: div 2}
in the last line, so that Eq.\,\eqref {eqn_div VB} is established. Moreover,
the proof of Eq.\,\eqref{eqn_div V NABLA} is analogous.

\smallskip
For the {\it generalised curl} ({\it i.e.}, $d$),
recall  the identity from \S 2.2.1:
\begin{equation*}
d\alpha(X,Y):= X\alpha(Y)-Y\alpha(X) - \alpha([X,Y])
\qquad \mbox{for $\alpha \in \Omega^1(M)$}.
\end{equation*}
Applying the above to $\alpha=\ob_{Z,\eta}$, we have
\begin{align*}
d\ob_{Z,\eta}(X,Y) &= X \ob_{Z,\eta}(Y) - Y\ob_{Z,\eta}(X) - \ob_{Z,\eta}([X,Y])\\
&= -X B(Y,Z,\eta) + YB(X,Z,\eta) + B([X,Y],Z,\eta),
\end{align*}
which is Eq.\, \eqref{eqn_curl Omega B}.
The proof of Eq.\,\eqref{eqn_curl Omega NABLA} is analogous.
This completes the proof.
\end{proof}

As a remark, it is crucial to recognize
that $\delta\big(\vb_{Z,\eta}(X,Y)\big)$ and $d\big(\ob_{Z,\eta}\big)(X,Y)$,
as well as $\delta\big(\vnab_{\xi,\eta}(X,Y)\big)$ and $d\big( \onab_{\xi,\eta}\big)(X,Y)$,
are essentially the same.
They only differ by a zero-th order term involving the Lie bracket $[X,Y]$.
This observation turns out to be crucial in the proof of Theorem \ref{thm_main theorem weak continuity}.

\subsection{Second Formulation: The Div-Curl Structure of the GCR Equations}

We now express the GCR equations in another form,
which is suitable for applying the intrinsic div-curl lemma,
{\em i.e.}, Theorem \ref{thm_main thm goemetric divcurl}.
To achieve this, we employ the geometric
quantities $\vb, \ob,\vnab$, and $\onab$,
in the reduced GCR equations
\eqref{eqn_gauss}--\eqref{eqn_reduced ricci}
in Theorem \ref{thm_reduced GCR} to obtain

\begin{lemma}[Second Formulation]\label{lemma:4.2}
The Gauss, Codazzi, and Ricci equations are equivalent
to the following equations, respectively{\rm :}
\begin{eqnarray}
&&\sum_\eta \langle\vb_{Z,\eta}(X,Y), \ob_{W,\eta} \rangle=-R(X,Y,Z,W),
\label{eqn_Gauss Eqn in second computation}\\
&&d(\ob_{Z,\eta})(X,Y) + \sum_\beta \langle \vnab_{\eta,\beta}(X,Y),\ob_{Z,\beta}\rangle + E(B)=0,
\label{eqn_Codazzi Eqn in second computation}\\
&& d(\onab_{\xi,\eta})(X,Y) + \sum_{\beta} \langle \vnab_{\eta,\beta}(X,Y), \onab_{\xi,\beta} \rangle
= \sum_Z\langle\vb_{Z,\xi}(X,Y),\ob_{Z,\eta}\rangle,
\label{eqn_Ricci Eqn in second computation}
\end{eqnarray}
where $E(B):=B(Y,\na_XZ,\eta)-B(X,\na_YZ,\eta)$
is linear in $B$,
and all the summations are at most countable and locally finite.
\end{lemma}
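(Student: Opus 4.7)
The plan is to reduce each of the three identities \eqref{eqn_Gauss Eqn in second computation}--\eqref{eqn_Ricci Eqn in second computation} to its counterpart in Theorem \ref{thm_reduced GCR} by direct substitution, treating $\ob$ and $\onab$ as tangent vector fields via the musical isomorphism $\sharp$ and expanding all contractions along local orthonormal frames of $TM$ or $TM^\perp$. The key algebraic tool will be Parseval's identity in such a frame: for an orthonormal normal frame $\{\eta_\alpha\}$ one has, for example,
\begin{equation*}
\sum_\alpha B(X,Z,\eta_\alpha)\,B(Y,W,\eta_\alpha) = \langle B(X,Z),\,B(Y,W)\rangle,
\end{equation*}
while for an orthonormal tangent frame $\{Z_i\}$ the sum $\sum_i B(X,Z_i,\xi)\,B(Y,Z_i,\eta)$ collapses to $\langle S_\xi X, S_\eta Y\rangle$ after invoking \eqref{eqn_shape operator and B}.

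For the Gauss identity \eqref{eqn_Gauss Eqn in second computation}, I first compute pointwise, using \eqref{def/eqn_v b}, \eqref{def/eqn_omega b} and the pairing $\langle Y,\ob_{W,\eta}\rangle = -B(Y,W,\eta)$,
\begin{equation*}
\langle \vb_{Z,\eta}(X,Y),\,\ob_{W,\eta}\rangle = -B(X,Z,\eta)\,B(Y,W,\eta) + B(Y,Z,\eta)\,B(X,W,\eta).
\end{equation*}
Summing over a normal frame and applying Parseval reduces the left-hand side to $\langle B(X,W),B(Y,Z)\rangle - \langle B(X,Z),B(Y,W)\rangle$, which equals $-R(X,Y,Z,W)$ by \eqref{eqn_gauss}. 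For the Codazzi identity \eqref{eqn_Codazzi Eqn in second computation}, I read off $d(\ob_{Z,\eta})(X,Y)$ from \eqref{eqn_curl Omega B}, then expand the $\vnab$--$\ob$ contraction along an orthonormal normal frame $\{\beta\}$ to get $-B(X,Z,\nap_Y\eta) + B(Y,Z,\nap_X\eta)$ by Parseval; adding $E(B)$ and setting the total to zero rearranges into \eqref{eqn_reduced codazzi} term-by-term.

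For the Ricci identity \eqref{eqn_Ricci Eqn in second computation}, I combine \eqref{eqn_curl Omega NABLA} with two frame expansions. The normal-frame sum converts $\sum_\beta\langle\vnab_{\eta,\beta}(X,Y),\onab_{\xi,\beta}\rangle$ into $\langle\nap_Y\eta,\nap_X\xi\rangle - \langle\nap_X\eta,\nap_Y\xi\rangle$ by Parseval, whereas the tangent-frame sum on the right-hand side collapses, via self-adjointness of the shape operator ($\langle S_\xi X,S_\eta Y\rangle=\langle S_\eta S_\xi X,Y\rangle$), to $\langle[S_\xi,S_\eta]X,Y\rangle$. Invoking the definition \eqref{eqn_shape operator} in the form $\bar{\na}_X\xi - \nap_X\xi = -S_\xi X$, this commutator rewrites precisely as $B(\bar{\na}_X\xi-\nap_X\xi,Y,\eta) - B(\bar{\na}_X\eta-\nap_X\eta,Y,\xi)$, which is the right-hand side of \eqref{eqn_reduced ricci} after one rearranges the terms already supplied by $d(\onab_{\xi,\eta})(X,Y)$ and the first Parseval sum.

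The only subtlety worth flagging is the bookkeeping of signs: the minus sign built into the definition \eqref{def/eqn_omega b} of $\ob$, the antisymmetrisation common to $\vb$, $\vnab$, and the exterior derivative $d$, and the reversal $[S_\eta,S_\xi]\leftrightarrow[S_\xi,S_\eta]$ induced by the ordering of indices in $\vb_{Z,\xi}$ versus $\ob_{Z,\eta}$ in \eqref{eqn_Ricci Eqn in second computation}. There is no analytic difficulty anywhere: each identity is tensorial, so pointwise verification in any convenient local orthonormal frame suffices, and the ``at most countable and locally finite'' summations in the statement are simply the frame sums restricted to a chart.
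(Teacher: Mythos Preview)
Your proposal is correct and follows essentially the same route as the paper: both arguments expand the $V$--$\Omega$ pairings along orthonormal frames of $TM^\perp$ (for the $\eta$ and $\beta$ sums) and of $TM$ (for the $Z$ sum), use Parseval to collapse the frame sums into the inner products appearing in Theorem~\ref{thm_reduced GCR}, and read off the first-order terms from Lemma~\ref{lemma:4.1}. The only place the paper adds something beyond your sketch is a short partition-of-unity construction (its Step~4) to give global meaning to the sum $\sum_Z$ on manifolds that admit no global tangent frame; your remark that the identities are tensorial and hence verifiable in any local orthonormal frame is the correct substitute, and in fact the cleaner way to phrase why this global construction is not an analytic issue.
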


\begin{proof}  We divide the proof into four steps.

{\bf 1.} We start with Eq. \eqref{eqn_Gauss Eqn in second computation}.
Now consider the following spanning set of normal vector fields:
\begin{equation}\label{eqn_set S1}
\mathcal{S}_1:=\big\{\{\eta_j\}_{j=1}^k\subset \vfn: |\eta_j|=1, \text{span}\{\eta_j\}_{j=1}^k = \vfn\big\},
\end{equation}
where $n+k$ is the dimension of the target space of the isometric immersion.

Since metric $g$ is in $W^{1,p}$ on $M$, and
the second fundamental form and the normal connection are well-defined in $L^p$,
then $\mathcal{S}_1$ exists {\it a.e.} on $M$.
In view of the remark above, we can make the following computation in the distributional sense:
\begin{align}
&\langle B(X,W), B(Y,Z)\rangle-\langle B(Y,W), B(X,Z) \rangle \nonumber \\
&= \sum_{\eta\in\mathcal{S}_1}\big(B(X,W,\eta) B(Y,Z,\eta) - B(Y,W,\eta) B(X,Z,\eta)\big) \nonumber\\
&=\sum_{\eta\in\mathcal{S}_1} B\big(B(X,W,\eta)Y - B(Y,W,\eta)X,Z,\eta\big)\nonumber\\
&= \sum_{\eta\in\mathcal{S}_1} \langle \vb_{W,\eta}(X,Y), \ob_{Z,\eta} \rangle,
\end{align}
where the last equality follows from the definition of $\vb$ and $\ob$.
Combining the above computation with \eqref{eqn_gauss}, we conclude
Eq. \eqref{eqn_Gauss Eqn in second computation}.

\smallskip
{\bf 2.} Next, we establish Eq. \eqref{eqn_Codazzi Eqn in second computation}.
Let us start from the Codazzi equation \eqref{eqn_reduced codazzi} in the form of Theorem \ref{thm_reduced GCR}:
\begin{align*}
& XB (Y,Z,\eta) - YB(X,Z,\eta) \\
&= B([X,Y],Z,\eta)	- B(X,\na_YZ,\eta)- B(X,Z,\nap_Y\eta)  + B(Y,\na_XZ,\eta) + B(Y,Z,\nap_X\eta).
\end{align*}
By Eq. \eqref{def/eqn_omega b} in the first formulation,
we can transform the above equation to the following:
\begin{equation}\label{interm eqn 5}
d\big(\ob_{Z,\eta}\big)(X,Y) - B(X,\na_YZ,\eta) - B(X,Z,\nap_Y\eta) + B(Y,\na_XZ,\eta) + B(Y,Z,\nap_X\eta)=0.
\end{equation}
Moreover, observe that
\begin{align}\label{interm eqn_6}
B(Y,Z,\nap_X\eta) - B(X,Z,\nap_Y\eta)
&= \sum_{\beta\in\mathcal{S}_1} \big(B(Y,Z,\beta)\langle\nap_X\eta,\beta\rangle - B(X,Z,\beta)\langle\nap_Y\eta,\beta\rangle\big) \nonumber\\
&=\sum_{\beta\in\mathcal{S}_1} B(\langle\nap_X\eta,\beta\rangle Y-\langle \nap_Y \eta,\beta\rangle X, Z,\beta)\nonumber\\
&=: \sum_{\beta\in\mathcal{S}_1} \langle\vnab_{\eta,\beta}(X,Y), \ob_{Z,\beta}\rangle.
\end{align}
Thus, putting  Eqs. \eqref{interm eqn 5}--\eqref{interm eqn_6} together,
we arrive at Eq. \eqref{eqn_Codazzi Eqn in second computation}. It expresses the Codazzi equation in terms of $V$ and $\Omega$.

{\bf 3}. Finally, we prove Eq. \eqref{eqn_Ricci Eqn in second computation}.
The Ricci equation \eqref{eqn_reduced ricci} in Theorem \ref{thm_reduced GCR} reads
\begin{align*}
&X\langle \nap_Y \xi,\eta\rangle - Y \langle \nap_X\xi,\eta \rangle\\
&= \langle \nap_{[X,Y]}\xi,\eta \rangle - \langle \nap_X \xi, \nap_Y \eta\rangle + \langle \nap_Y\xi, \nap_X \eta\rangle \\
&\quad + B (\bar{\na}_X \xi-\nap_X\xi, Y, \eta) - B (\bar{\na}_X\eta-\nap_X\eta, Y, \xi).
	\end{align*}
In light of Eq. \eqref{eqn_curl Omega NABLA} in the first formulation, we have
\begin{align}\label{interm eqn 1}
&d(\ob_{\xi,\eta})(X,Y) + \langle \nap_X \xi, \nap_Y \eta\rangle - \langle \nap_Y\xi, \nap_X \eta\rangle \nonumber \\
&\, = B (\bar{\na}_X \xi-\nap_X\xi, Y, \eta) - B (\bar{\na}_Y\eta-\nap_Y\eta, X, \xi).
\end{align}
The last two terms on the left-hand side can be expressed as
\begin{align}\label{interm eqn 2}
\langle \nap_X \xi, \nap_Y \eta\rangle - \langle \nap_Y\xi, \nap_X \eta\rangle
&= \sum_{\beta \in \mathcal{S}_1} \big(\langle \nap_X \xi,\beta \rangle \langle \beta, \nap_Y\eta\rangle
  - \langle\nap_Y\xi, \beta\rangle \langle\beta, \nap_X\eta\rangle\big) \nonumber\\
&= \sum_{\beta \in \mathcal{S}_1} \langle \nap_{\langle\nap_Y\eta,\beta\rangle X
- \langle\nap_X\eta,\beta\rangle Y} \xi ,\beta \rangle \nonumber\\
&=: \sum_{\beta \in \mathcal{S}_1}  \langle\vnab_{\eta,\beta} (X,Y), \onab_{\xi,\beta} \rangle,
\end{align}
by using the definition of $\vnab$ and $\onab$ in Eqs. \eqref{def/eqn_v nabla}
and \eqref{def/eqn_omega nabla}.
	
To deal with the right-hand side of Eq. \eqref{interm eqn 1},
we temporarily assume the existence of the tangential spanning set of unit vector fields:
\begin{equation}\label{eqn_def of S2}
\mathcal{S}_2:= \big\{\{Z_j\}_{j=1}^n\subset\vf\,:\, |Z_j|\equiv 1, \text{ span}\{Z_j\}_{j=1}^n=\vf\big\}.
\end{equation}
In this case, one can compute the right-hand side of equation
\eqref{interm eqn 1}:
\begin{align}\label{interm eqn 3}
&B (\bar{\na}_X \xi- \nap_X\xi, Y, \eta) - B (\bar{\na}_X\eta -\nap_X\eta, Y, \xi)\nonumber\\
&= \sum_{Z\in\mathcal{S}_2} \big(B(\langle\bar{\na}_X\xi-\nap_X\xi, Z \rangle Z, Y,\eta)
- B(\langle\bar{\na}_X\eta-\nap_X\eta, Z \rangle Z, Y,\xi)\big)\nonumber\\
&= \sum_{Z\in\mathcal{S}_2} \big(B(Z,X,\eta)B(Z,Y,\xi) - B(Z,X,\xi)B(Z,Y,\eta)\big).
\end{align}
Indeed, to obtain the last equality, recall that $B$ is symmetric in the first two arguments.
Then, using Eqs. \eqref{eqn_shape operator}--\eqref{eqn_shape operator and B}, we have
\begin{equation*}
\langle\bar{\na}_X\xi-\nap_X\xi, Z \rangle = -\langle S_\xi X,Z\rangle = -B(X,Z,\xi),
\end{equation*}
where $S$ is the shape operator. The other term follows similarly.

On the other hand, by Eqs. \eqref{def/eqn_v b} and \eqref{def/eqn_omega b}, we have
\begin{align}\label{interm eqn 4}
B(Z,X,\eta)B(Z,Y,\xi) - B(Z,X,\xi)B(Z,Y,\eta)
&=-B(B(X,Z,\xi)Y - B(Y,Z,\xi)X, Z, \xi)\nonumber\\
&=: \langle \vb_{Z,\xi}(X,Y), \ob_{Z,\eta} \rangle.
\end{align}
Thus, putting \eqref{interm eqn 1}--\eqref{interm eqn 2} and \eqref{interm eqn 3}--\eqref{interm eqn 4} together,
we obtain Eq. \eqref{eqn_Ricci Eqn in second computation},
provided that the spanning set $\mathcal{S}_2 \subset \vf$ is well-defined.

\smallskip
{\bf 4.} Unfortunately, we cannot take the existence of $\mathcal{S}_2$ for granted. For example,
on $M=\mathbb{S}^{2m}$, the Hairy Ball theorem shows that any $Z\in\vf$ must vanish at some point.
To overcome this difficulty, we may resort to a partition of unity argument.
Let $\mathcal{A}=\{U_\alpha:\alpha\in\mathcal{I}\}$
be an atlas for $M$,
and let $\{\rho_\alpha:\alpha\in\mathcal{I}\}$ be a smooth partition of unity subordinate to $\mathcal{A}$
({\it cf.} \S \ref{Section 2}).
On each $U_\alpha$, the spanning set in the form of $\mathcal{S}_2$ exists,
since $U_\alpha$ is diffeomorphic to $\real^n$. We write
\begin{equation*}
\mathcal{S}_2^\alpha =\big\{\{Z^\alpha_j\}_{j=1}^n\subset\Gamma(TU_\alpha)\; :\;
|Z^\alpha_j|\equiv 1, \text{span}\{Z^\alpha_j\}_{j=1}^n=\Gamma(TU_\alpha) \big\}.
\end{equation*}
Define
\begin{equation}
\widetilde{\mathcal{S}_2}= \big\{Z:=\sum_{\alpha\in\mathcal{I}} \rho_\alpha Z^\alpha\chi_{\text{supp}(\rho_\alpha)}\; :\;
Z^\alpha \in \mathcal{S}_2^\alpha, \alpha\in\mathcal{I}\big\}.
\end{equation}
Then we can take $\widetilde{\mathcal{S}_2}$ in place of $\mathcal{S}_2$
so that all the preceding arguments for the Ricci equation
pass through.
This completes the proof.
\end{proof}

\subsection{Proof of Theorem \ref{thm_main theorem weak continuity}}

We can now prove the global weak rigidity theorem, Theorem \ref{thm_main theorem weak continuity},
for the GCR equations on Riemannian manifolds.
In fact, the main ingredients of the proof have been provided in the previous
two formulations, which express the GCR equations in the form
suitable for employing
Theorem \ref{thm_main thm goemetric divcurl}.

\begin{proof}[Proof of Theorem {\rm \ref{thm_main theorem weak continuity}}]
The proof consists of three steps.

\smallskip
{\bf 1.} By taking the orientable double cover when necessary,
we may assume that $M$ is orientable.
Then we can employ Theorem
\ref{thm_main thm goemetric divcurl} on $M$.
Moreover, by Theorems \ref{thn_GCR}--\ref{thm_reduced GCR},
$(\bep,\nep)$ are solutions of Eqs. \eqref{eqn_gauss}--\eqref{eqn_reduced ricci}
in the distributional sense.

\smallskip
{\bf 2.} Consider our second formulation.
The zero-th order terms of
Eqs. \eqref{eqn_Gauss Eqn in second computation}--\eqref{eqn_Ricci Eqn in second computation}
contain linear combinations of the following quadratic nonlinear forms:
\begin{equation}\label{four terms}
\begin{aligned}
&\langle \vbe_{W,\eta}(X,Y), \obe_{Z,\eta}\rangle, \quad \langle\vnabe_{\eta,\beta}(X,Y),\onabe_{\xi,\beta}\rangle,\\[2mm]
&\langle \vbe_{Z,\xi}(X,Y),\obe_{Z,\eta} \rangle, \quad \langle\vnabe_{\eta,\beta}(X,Y),\obe_{Z,\beta}\rangle,
\end{aligned}
\end{equation}
and the linear term $E(B^\epsilon)$ in $B^\e$.
Clearly, $E(B^\epsilon)\rightarrow E(B)$ in $\dis$, owing to the linearity of $B$
and the uniform $L^p$ boundedness of $\{\bep\}$.

For the four terms in \eqref{four terms}, by the hypotheses of Theorem \ref{thm_main theorem weak continuity},
we find that
\begin{equation*}
\{\vbe(X,Y), \obe,  \vnabe(X,Y), \onabe\}
\end{equation*}
are uniformly bounded in $L^p_{\rm loc}$.
As a consequence, the four quadratic terms in \eqref{four terms} are uniformly bounded
in $L^{p/2}_{\rm loc}$ for $p>2$.

Then, by Eqs. \eqref{eqn_Codazzi Eqn in second computation}--\eqref{eqn_Ricci Eqn in second computation},
we know that $d(\onabe_{\xi,\eta})(X,Y)$ and $d(\obe_{Z,\eta})(X,Y)$ are uniformly bounded in $L^{p/2}_{\rm loc}$,
so that they are compact at least in $W^{-1, p'}_{\rm loc}$ for some $p'\in (1, 2)$ by the Sobolev embedding.
On the other hand, since
$\onabe_{\xi,\eta}$ and $\obe_{Z,\eta}$ are uniformly bounded in $L^p_{\rm loc}$,
$d(\onabe_{\xi,\eta})(X,Y)$ and $d(\obe_{Z,\eta})(X,Y)$ are uniformly bounded in $W^{-1,p}_{\rm loc}, p>2$.
Then, by interpolation,
we conclude that
$$
\mbox{$\big\{d(\onabe_{\xi,\eta})(X,Y)$, $\,\, d(\obe_{Z,\eta})(X,Y)\big\}$ $\qquad\,\,\,$
are pre-compact subsets of $H^{-1}_{\rm loc}(M)$.}
$$

Furthermore, Eqs. \eqref{eqn_div VB}--\eqref{eqn_curl Omega NABLA}
of the first formulation lead to the following identities:
\begin{eqnarray*}
&&{\rm div}\big(\vbe_{Z,\eta}(X,Y)\big)
= d(\onabe_{Z,\eta})(X,Y)-\bep([X,Y],Z,\eta)+\bep(X,Z,\eta){\rm div}Y-\bep(Y,Z,\eta){\rm div}X,\qquad\quad\\
&&{\rm div}\big(\vnabe_{\xi,\eta}(X,Y)\big) = d(\onabe_{\xi,\eta})(X,Y) + \langle \nep_{[X,Y]}\xi,\eta \rangle
   +\langle\nep_Y\xi,\eta\rangle{\rm div}X-\langle\nep_X\xi, \eta\rangle{\rm div}Y.
\end{eqnarray*}
Since $(\bep,\nep)$ are uniformly bounded in $L^p_{\rm loc}, p>2$,
again by the Sobolev embeddings, they are compact in $H^{-1}_{\rm loc}$.
Since the divergence operator equals $\delta$ (the adjoint of $d$) modulo a sign,
we conclude that
$$
\mbox{$\big\{\delta\big(\vbe_{Z,\eta}(X,Y)\big)$, $\,\,\delta\big(\vnabe_{\xi,\eta}(X,Y)\big)\big\}$ $\quad\,\,$
are pre-compact sets of $H^{-1}_{\rm loc}(M)$.}
$$

{\bf 3.}
Since $(\bep,\nep)$ are uniformly bounded in $L^p_{\rm loc}, p>2$, after passing to subsequences,
they converge to some $(B,\na^\perp)$ weakly in $L^p_{\rm loc}$.
Combining the arguments in Step 2 with Theorem \ref{thm_main thm goemetric divcurl},
we obtain the following convergence in the distributional sense:
\begin{eqnarray*}
&&\langle \vbe_{W,\eta}(X,Y), \obe_{Z,\eta}\rangle \longrightarrow  \langle \vb_{W,\eta}(X,Y), \ob_{Z,\eta}\rangle,\\
&&\langle\vnabe_{\eta,\beta}(X,Y),\onabe_{\xi,\beta}\rangle \longrightarrow\langle\vnab_{\eta,\beta}(X,Y),\onab_{\xi,\beta}\rangle,\\
&&\langle \vbe_{Z,\xi}(X,Y),\obe_{Z,\eta} \rangle \longrightarrow \langle \vb_{Z,\xi}(X,Y),\ob_{Z,\eta} \rangle,\\
&&\langle\vnabe_{\eta,\beta}(X,Y),\obe_{Z,\beta}\rangle \longrightarrow\langle\vnab_{\eta,\beta}(X,Y),\ob_{Z,\beta}\rangle.
\end{eqnarray*}

Therefore, in Eqs. \eqref{eqn_Gauss Eqn in second computation}--\eqref{eqn_Ricci Eqn in second computation}
in the second formulation, we can pass the limits as $\epsilon \rightarrow 0$.
This guarantees that the weak limit $(B,\na^\perp)$ is still a weak solution of the GCR system.
This completes the proof.
\end{proof}

To conclude this section, we remark that Eqs. (2.1)--(2.3) in \cite{CSW10},
which are the Gauss, Codazzi, and Ricci equations in local coordinates,
can be seen directly from our global formulations
in Theorems \ref{thn_GCR}--\ref{thm_reduced GCR}.

Indeed,  write $\{\partial_i:1\leq i \leq n\}$ as a local coordinate system on the tangent bundle $TM$,
and $\{\partial_\alpha: {n+1} \leq \alpha \leq {n+k}\}$ for a local coordinate system on the normal bundle $TM^\perp$.
To write the GCR equations in the local coordinates, we define
\begin{equation}\label{local h, kappa}
h_{ij}^\alpha := \langle B(\partial_i, \partial_j), \partial_\alpha \rangle,
\qquad \kappa_{i\beta}^\alpha := \langle \na^\perp_{\partial_\alpha} \partial_i, \partial_\beta \rangle,
\end{equation}
and substitute the unit vector fields $\partial_i$ (or $\partial_\alpha$) in place of $X$ (or $\xi$,  respectively),
in the GCR equations in Theorems \ref{thn_GCR}--\ref{thm_reduced GCR}.
We solve for $\{h_{ij}^\alpha,\kappa_{i\beta}^\alpha\}$,
namely the second fundamental form and the normal affine connection written coordinate-wise.
In this way, we recover Eqs. (2.1)--(2.3) in \cite{CSW10}:
\begin{eqnarray}
&& h^\alpha_{ik} h^\alpha_{jl} - h^\alpha_{il} h^\alpha_{jk}
= R_{ijkl}, \label{local gauss}\\
&&\frac{\partial h^\alpha_{lj}}{\partial x^k} - \frac{\partial h^\alpha_{kj}}{\partial x^l} + \Gamma^m_{lj}h^\alpha_{km}
 - \Gamma^m_{kj}h^\alpha_{lm} + \kappa^\alpha_{k\beta}h^\beta_{lj} - \kappa^\alpha_{l\beta}h^\beta_{kj} = 0,\label{local codazzi}\\
&&\frac{\partial\kappa^\alpha_{l\beta}}{\partial x^k} - \frac{\partial \kappa^\alpha_{k\beta}}{\partial x^l}
- g^{mn}\big(h^\alpha_{ml}h^\beta_{kn}-h^\alpha_{mk}h^\beta_{ln}\big)
+ \kappa^\alpha_{k\gamma}\kappa^\gamma_{l\beta}
-\kappa^\alpha_{l\gamma}\kappa^\gamma_{k\beta}=0,\label{local ricci}
\end{eqnarray}
where $R_{ijkl}:=R(\partial_i,\partial_j,\partial_k,\partial_l)$, and the Christoffel symbols are defined as usual by
\begin{equation*}
\Gamma^k_{ij}:=\frac{1}{2}g^{kl} \big(\partial_i g_{jl} + \partial_j g_{il} - \partial_l g_{ij}\big).
\end{equation*}
Therefore, Theorem \ref{thm_main theorem weak continuity} for the weak rigidity of the GCR equations
is a global, intrinsic version of the local weak rigidity result of Theorem 3.3 in \cite{CSW10}.

\section{Isometric Immersions, the Cartan Formalism, and the GCR Equations on Manifolds with Lower Regularities}\label{section_equivalence}

In \S 4,  we have proved the global weak rigidity of the GCR equations on Riemannian manifolds
with lower regularity. Then the next natural question is about its geometric implications.
We address this question with two interrelated goals:
\begin{enumerate}
\item[(i)] Global isometric immersions
for simply-connected manifolds with lower regularity are constructed
from the GCR equations. As remarked in the introduction,
this is related to the {\it realization problem} in elasticity.
\item[(ii)]
The global weak rigidity of isometric immersions in turn
provides crucial insights to the global weak rigidity of the GCR equations.
This observation will lead to an alternative proof of Theorem \ref{thm_main theorem weak continuity}; {\it cf.} \S 7.1.
\end{enumerate}

\subsection{From PDEs to Geometry: An Equivalence Theorem}

We now address the central question raised above.
First, it should be noticed that the results in \S \ref{section_GCR} are
essentially {\it PDE-theoretic}.
Despite the geometric -- global and intrinsic -- nature of the formulation
and proof of Theorem \ref{thm_main theorem weak continuity},
we have only analyzed the GCR equations \emph{per se},
but have not referred to their geometric origin, {\it i.e.},
the isometric immersion problem.
One would expect that, providing that our formulation is natural,
the weak rigidity of isometric immersions and the weak rigidity of the GCR equations should
be essentially the same problem. We now formalize this observation and prove it in mathematical rigor.

We point out that the {\it realization problem}, {\it i.e.}, the construction of isometric immersions
from the GCR equations, has been investigated in the recent years; see
Ciarlet-Gratie-Mardare \cite{Cia08}, Mardare \cite{Mar03}--\cite{Mar07}, Szopos \cite{Szo08},
and the references cited therein.
These previous results, which solve the {\it realization problem} locally, can be summarized
in the following theorem.

\begin{theorem}
\label{proposition_mardare, szopos}
Let $U\subset \real^n$ be a
simply-connected open set.
Suppose that
the symmetric matrix fields $\{g_{ij}\}\in W^{1,p}_{\rm loc}(U;O(n))$ and
$\{h_{ij}=(h^{\alpha}_{ij})\}_{n+1\leq \alpha \leq n+k}\subset L^p_{\rm loc}(U;O(n))$,
and the anti-symmetric matrix field
$\{\kappa_{ij}=(\kappa^\alpha_{ij})\}_{n+1\leq \alpha \leq n+k} \subset  L^p_{\rm loc}(U;\mathfrak{so}(n))$
prescribed on $U$
satisfy the GCR equations \eqref{local gauss}--\eqref{local ricci}
in local coordinates in the distributional sense.
Then there exists a $W^{2,p}_{\rm loc}$ isometric immersion $f: U\hookrightarrow\rnk$
such that $g_{ij}, h_{ij}$, and $\kappa_{ij}$ are its metric, second fundamental form, and normal connection
in local coordinates, respectively.
\end{theorem}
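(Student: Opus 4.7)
The plan is the classical Cartan moving-frame construction, carried out at low regularity via Mardare's Pfaffian-system theory for $W^{1,p}$ coefficients. Granting that $U \subset \mathbb{R}^n$ is simply-connected and $p>n$ (so Morrey's embedding $W^{1,p} \hookrightarrow C^0_{\rm loc}$ makes the relevant quadratic products of low-regularity quantities land in $L^{p/2}_{\rm loc}$), I would first assemble the three pieces of prescribed data into a single $\mathfrak{so}(n+k)$-valued $1$-form on $U$. Concretely, set $\omega^i_j := \Gamma^i_{jk}\,dx^k$, where $\Gamma^i_{jk}$ are the Christoffel symbols of $\{g_{ij}\}$ (so $\omega^i_j \in L^p_{\rm loc}$ since $g\in W^{1,p}_{\rm loc}$), and set $\omega^\alpha_i := h^\alpha_{ij}\,dx^j$ and $\omega^\alpha_\beta := \kappa^\alpha_{i\beta}\,dx^i$. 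Arranging these in the natural block form with antisymmetry yields
\[
\Phi \;\in\; L^p_{\rm loc}\bigl(U;\mathfrak{so}(n+k)\bigr).
\]

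A block-wise computation, matching the entries of $d\Phi + \Phi\wedge\Phi$ against the three equations \eqref{local gauss}--\eqref{local ricci}, shows that the GCR system is precisely the coordinate form of the Maurer--Cartan flatness identity $d\Phi + \Phi\wedge\Phi = 0$ in $\mathcal{D}'(U)$. Invoking Mardare's $W^{1,p}$ Poincar\'e--Frobenius theorem on the simply-connected domain $U$ then yields an orthogonal matrix field $F \in W^{1,p}_{\rm loc}(U;O(n+k))$, unique up to a constant left factor, satisfying $F^{-1}\,dF = \Phi$. The columns of $F$ constitute an orthonormal moving frame $(e_1,\ldots,e_n,e_{n+1},\ldots,e_{n+k})$ adapted to the tangential/normal decomposition.

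Next, form the $\mathbb{R}^{n+k}$-valued $1$-form $\theta := \omega^i\, e_i$, where $\{\omega^i\}$ is the $g$-orthonormal co-frame dual to $\{e_i\}$ (built from the Cholesky factor of $g^{ij}$). The first Cartan structural equation, automatic for the torsion-free Levi-Civita co-frame and compatible with the equation $dF=F\Phi$, forces $d\theta = 0$ in $\mathcal{D}'(U)$. The classical Poincar\'e lemma on the simply-connected $U$ then supplies $f \in W^{2,p}_{\rm loc}(U;\mathbb{R}^{n+k})$ with $df = \theta$, and the pointwise orthonormality of $\{e_i\}$ forces $df^\top df = g$, so $f$ is an isometric immersion. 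The prescribed $h^\alpha_{ij}$ and $\kappa^\alpha_{i\beta}$ are finally read off from $dF=F\Phi$ by projecting $\partial_i e_j$ onto the normal bundle (giving $h^\alpha_{ij}$) and $\partial_i e_\beta$ onto $e_\alpha$ (giving $\kappa^\alpha_{i\beta}$), matching \eqref{local h, kappa}.

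The main obstacle is the integration step producing $F$: the classical Frobenius theorem requires $C^1$ coefficients and does not apply to $\Phi \in L^p$. Mardare's theorem circumvents this by mollification: set $\Phi^\epsilon := \Phi \ast \rho_\epsilon$, observe that $d\Phi^\epsilon + \Phi^\epsilon \wedge \Phi^\epsilon \to 0$ in $L^{p/2}_{\rm loc}$ (the bilinear commutator estimate requiring precisely $p>n$), solve the smooth nearly-flat systems on an exhaustion of $U$ by relatively compact simply-connected subdomains via the classical Frobenius theorem, and pass to the limit using uniform $W^{1,p}$ bounds on $F^\epsilon$ together with simple connectivity of $U$ (which prevents any accumulation of holonomy obstruction). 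The hypothesis $p>n$ is indispensable both for the quadratic product estimates and for the Sobolev continuity that guarantees $F$ takes values pointwise in $O(n+k)$.
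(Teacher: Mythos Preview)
Your proposal is correct and follows essentially the same route as the paper: package the data into an $\mathfrak{so}(n+k)$-valued connection $1$-form (the paper's $W$), identify the GCR equations with the second structural equation $dW + W\wedge W = 0$, invoke Mardare's Pfaff-system theorem (Lemma~\ref{Mardare's Lemmas}(i)) to integrate to an $O(n+k)$-valued frame, and then solve the Poincar\'e system $df = w\cdot A$ via Lemma~\ref{Mardare's Lemmas}(ii) for the immersion. One small slip worth fixing: the $\omega^i_j$ you write with coordinate Christoffel symbols $\Gamma^i_{jk}\,dx^k$ are not antisymmetric in $(i,j)$, so $\Phi$ as defined does not land in $\mathfrak{so}(n+k)$; you must take the connection forms in the $g$-orthonormal frame you later introduce via the Cholesky factor, exactly as the paper does in \eqref{eqn_def of connection form 1}--\eqref{eqn_def of connection form 3}.
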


Here and in the sequel,
we write $\mathfrak{gl}(q;\real)$ for the space of $q\times q$ matrices
with real entries,
$O(q) \subset \mathfrak{gl}(q;\real)$ for the space of symmetric $q \times q$ matrices,
and $\mathfrak{so}(q)\subset \mathfrak{gl}(q;\real)$ the space of $q\times q$ anti-symmetric matrices.

\begin{remark}
The codimension $k$ of the isometric immersion in Theorem {\rm \ref{proposition_mardare, szopos}}
above is required to be larger than or equal to
the minimal {\em Janet dimension} $J(n):=\frac{n(n+1)}{2}$.
For more details, we refer to Janet \cite{Janet} and the exposition by Han-Hong \cite{HanHon06}.
\end{remark}

The strategy for the proof in \cite{Mar03}--\cite{Mar07} and \cite{Szo08} can be briefly sketched as follows:
First, the GCR equations can be transformed into two types of first-order matrix-valued PDE systems
with $W^{2,p}$
coefficients, known as the Pfaff and Poincar\'e systems;
then, applying various analytic results established in \cite{Mar03}--\cite{Mar07},
one can construct explicitly the local isometric immersions by solving the Pfaff and Poincar\'e systems
with the rough coefficients.
Nevertheless, despite the successful solution to the {\it realization problem} (at least locally),
the transformations from the GCR equations to the Pfaff and Poincar\'e systems in
\cite{Mar03}--\cite{Mar07} and \cite{Szo08}
appear quite delicate,
which involve many different types of geometric quantities in local coordinates
({\it e.g.}, metrics, connections, and curvatures) as the entries of the same matrix
of enormous size.

In this section, we give an alternative global geometric proof
of Theorem \ref{proposition_mardare, szopos}.
In addition, we solve the problems listed in goals (i) and (ii) at one stroke.
Our perspective is essentially different from those in \cite{Mar03}--\cite{Mar07} and \cite{Szo08}:
We aim at establishing the equivalence of the GCR equations and the existence of isometric immersions
on Riemannian manifolds with $W^{1,p}_{\rm loc}$ metric, $p>n$, via the Cartan formalism
for exterior differential calculus
(see \cite{Spi79}).

We first state the main theorem
of this section,
which concerns the equivalence of three formulations of the GCR equations {\it in disguise}.
Roughly speaking, we view the Cartan formalism
as the bridge between the GCR equations and isometric immersions.

\begin{theorem}\label{thm_equivalence of 3 formulations}
Let $(M,g)$ be an $n$-dimensional, simply-connected Riemannian manifold with metric $g\in\woneploc$, $p>n$,
and let $(E,M,\real^k)$ be a vector bundle over $M$.
Assume that $E$ has a $\woneploc$ metric $g^E$ and an $\lploc$ connection $\na^E$
such that $\na^E$ is compatible with metric $g^E$.
Moreover, suppose that there exists
an $\lploc$ tensor field $S:\Gamma(E) \times \vf \mapsto \vf$ satisfying
\begin{equation*}
\langle X, S_\eta (Y)\rangle - \langle S_\eta (X), Y\rangle =0
\end{equation*}
with the corresponding $\lploc$ tensor field $B: \vf \times \vf \mapsto \Gamma (E)$ defined by
\begin{equation*}
\langle B(X,Y),\eta \rangle = -\langle S_\eta (X), Y \rangle.
\end{equation*}
Then the following are equivalent{\rm :}
\begin{enumerate}
\item[\rm (i)]
The GCR equations as in Theorem {\rm \ref{thn_GCR}} with $R^\perp$ replaced by $R^E$,
the Riemann curvature operator on the bundle{\rm ;}
\item[\rm (ii)]
The Cartan formalism
\eqref{eqn_first structure eqn}--\eqref{eqn_def of connection form 3}{\rm ;}
\item[\rm (iii)]
The existence of a global isometric immersion $f\in W^{2,p}_{\rm loc}(M; \real^{n+k})$ such that the induced normal
bundle $T(fM)^\perp$, normal connection $\na^\perp$, and second fundamental form can be identified
with $E, \na^E$, and $B$, respectively.
\end{enumerate}
In {\rm (i)}--{\rm (ii)}, the equalities are taken in the distributional sense and, in {\rm (iii)},
the isometric immersion $f\in W^{2,p}_{\rm loc}$ is unique
{\it a.e.}, modulo the group of  Euclidean motions $G=\real^{n+k} \rtimes O(n+k)$.
\end{theorem}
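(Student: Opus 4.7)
The plan is to interpolate between the three formulations via the Cartan formalism on the bundle $TM \oplus E$, which plays the role of the ``ambient'' trivial bundle $T\R^{n+k}$ along an isometric immersion. Throughout, we work with orthonormal local frames $\{e_i\}_{i=1}^n$ for $TM$ and $\{e_\alpha\}_{\alpha=n+1}^{n+k}$ for $E$, together with the dual coframe $\{\omega^i\}$ on $M$. The Levi-Civita connection on $M$ and the connection $\nabla^E$ on $E$ produce $\mathfrak{so}(n)$- and $\mathfrak{so}(k)$-valued connection $1$-forms $\omega_i^j$ and $\omega_\alpha^\beta$, respectively, while the tensor $S$ (equivalently $B$) furnishes the ``mixed'' $1$-forms $\omega_i^\alpha := \langle S_{e_\alpha}e_i, e_j\rangle \omega^j$. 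Collecting these into a single $\mathfrak{so}(n+k)$-valued connection $1$-form $\Omega = (\omega_I^J)_{1 \le I,J \le n+k}$ on $TM\oplus E$ and a $\R^{n+k}$-valued $1$-form $\omega = (\omega^i, 0)$, the whole structure is encoded by the Cartan equations \eqref{eqn_first structure eqn}--\eqref{eqn_def of connection form 3}.

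For the implication (iii)$\Rightarrow$(i), one pulls back the flat connection on $\R^{n+k}$ by $f$ and decomposes it into tangential and normal parts; this is essentially the classical computation recorded in Theorem \ref{thn_GCR} and Theorem \ref{thm_reduced GCR}, and it carries over to the $W^{2,p}_{\rm loc}$ setting since $p>n$ guarantees that $df \in W^{1,p}\cdot L^p \hookrightarrow L^p$ products are well-defined and the Sobolev multiplication $W^{1,p}\cdot L^p \hookrightarrow L^p$ holds. The implication (i)$\Leftrightarrow$(ii) is a frame-wise rewriting: in the chosen orthonormal frame, the Gauss equation \eqref{eqn_gauss} is exactly the vanishing of the tangential-tangential block of $d\Omega + \Omega\wedge\Omega$, the Codazzi equation \eqref{eqn_reduced codazzi} is the vanishing of the tangential-normal block, and the Ricci equation \eqref{eqn_reduced ricci} is the vanishing of the normal-normal block. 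The equivalence is purely algebraic in each fibre once the $W^{1,p}\cdot L^p$ products are interpreted distributionally, and the independence of the chosen frame follows by standard transformation rules for connection forms.

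The substantive step is (ii)$\Rightarrow$(iii), the integration of the Cartan formalism to a global immersion. The strategy is to view the extended connection form $\Omega$ together with $\omega$ as an $\mathfrak{e}(n+k) = \R^{n+k}\rtimes \mathfrak{so}(n+k)$-valued $1$-form $\Theta$ on $M$. The Cartan structure equations are precisely the Maurer-Cartan equations $d\Theta + \tfrac{1}{2}[\Theta,\Theta] = 0$ for the Euclidean group. By the Cartan-Darboux/Poincar\'e theorem for $W^{1,p}$-connections with $p>n$, on a simply-connected manifold such a form is the pullback $F^*\theta_G$ of the Maurer-Cartan form of $G = \R^{n+k}\rtimes O(n+k)$ under a map $F\in W^{1,p}_{\rm loc}(M;G)$, unique up to left-translation by a constant element of $G$. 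The first factor of $F$ yields the desired immersion $f\in W^{2,p}_{\rm loc}(M;\R^{n+k})$ (the regularity jump comes from integrating $\omega$), and the second factor identifies the splitting $T\R^{n+k} \cong df(TM)\oplus fE$. One then verifies, by reading off the components of $F^*\theta_G$, that the induced metric is $g$, the induced normal connection is $\nabla^E$, and the induced second fundamental form is $B$. Uniqueness modulo $G$ follows from the uniqueness clause in the Cartan-Darboux theorem, since Euclidean motions act transitively on initial frames.

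The main obstacle is the low regularity in step (ii)$\Rightarrow$(iii): the classical Cartan-Darboux theorem requires $C^1$ forms, whereas here $\Theta$ lies only in $L^p_{\rm loc}$ with $d\Theta$ controlled in the sense of distributions. This is precisely the Pfaff/Poincar\'e integration problem analyzed in Mardare \cite{Mar03,Mar05,Mar07}, Ciarlet-Gratie-Mardare \cite{Cia08}, and Szopos \cite{Szo08}, where the hypothesis $p>n$ is critical to ensure that the nonlinear quadratic term $[\Theta,\Theta]$ lies in $L^{p/2}_{\rm loc} \hookrightarrow L^1_{\rm loc}$ and that the resulting group-valued primitive has the claimed $W^{1,p}_{\rm loc}$ regularity via Sobolev multiplication. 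I would invoke those results locally to produce a family of local isometric immersions, and then glue them globally using simple-connectedness of $M$: the compatibility cocycle on double overlaps takes values in $G$ (by the uniqueness modulo $G$), and the triviality of $\pi_1(M)$ plus the connectedness of $G$ allows the cocycle to be trivialized, yielding a single global $f\in W^{2,p}_{\rm loc}(M;\R^{n+k})$.
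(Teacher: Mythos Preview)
Your proof is correct and follows essentially the same strategy as the paper: both reduce the key implication (ii)$\Rightarrow$(iii) to integrating a flat connection, invoke Mardare's low-regularity solvability results for the resulting first-order systems, and globalize via simple-connectedness. The main difference is one of packaging: you treat the structure equations as a single Maurer--Cartan equation for the Euclidean group $G=\R^{n+k}\rtimes O(n+k)$ and seek a primitive $F:M\to G$ all at once, whereas the paper separates this into two stages---first a Pfaff system $W=dA\cdot A^\top$ for $A:M\to O(n+k)$ (whose compatibility condition is exactly the second structure equation), then a Poincar\'e system $df=w\cdot A$ for $f:M\to\R^{n+k}$ (whose compatibility reduces to the first structure equation)---matching Mardare's lemmas (Lemma~\ref{Mardare's Lemmas}) directly. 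Your formulation is more conceptual and makes the role of the Euclidean group transparent; the paper's two-step version is more explicit and avoids having to unwind the semidirect-product structure when verifying that the recovered $f$ carries the correct second fundamental form and normal connection. One small correction: in your globalization, the connectedness of $G$ is neither needed nor available (since $O(n+k)$ has two components); what trivializes the locally constant $G$-valued transition cocycle is purely $\pi_1(M)=0$, and the paper handles this by the equivalent device of extending $f$ along paths and using simple-connectedness to show path-independence.
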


In Theorem \ref{thm_equivalence of 3 formulations},
we require $p>n$ (instead of $p>2$ for the weak rigidity of the GCR equations, as in \S \ref{section_GCR})
to guarantee that the immersion is $C^1$, which
agrees with the classical notions from differential geometry.
In \S 5.2 below, the Cartan formalism is introduced.
This clarifies the precise meaning for the second item in the above theorem.
Then, in \S 5.3--\S 5.4, we give a proof of Theorem \ref{thm_equivalence of 3 formulations}.

Finally, the implication: ${\rm (i)} \Rightarrow {\rm (iii)}$
leads to the following global realization theorem on Riemannian manifolds,
independent of the local coordinates:

\begin{corollary}\label{cor_ realisation of mnfd}
Let $(M,g)$ be an $n$-dimensional, simply-connected Riemannian manifold with metric $g\in\woneploc, p>n,$
and let $(E,M,\real^k)$ be a vector bundle over $M$.
Assume that $E$ has a $\woneploc$ metric $g^E$ and an $\lploc$ connection $\na^E$
such that $\na^E$ is compatible with metric $g^E$.
Moreover, suppose that there exists an $\lploc$ tensor field $S:\Gamma(E) \times \vf \mapsto \vf$ satisfying
\begin{equation*}
\langle X, S_\eta (Y)\rangle - \langle S_\eta (X), Y\rangle =0
\end{equation*}
with the corresponding $\lploc$ tensor field $B: \vf \times \vf \mapsto \Gamma (E)$ defined by
\begin{equation*}
\langle B(X,Y),\eta \rangle = -\langle S_\eta (X), Y \rangle.
\end{equation*}
Assume that the GCR equations {\rm (}as in Theorem {\rm \ref{thn_GCR}}, with $R^\perp$ replaced by $R^E${\rm )}
are satisfied in the distributional sense.
Then there exists a global isometric immersion $f \in \wtwoploc (M; \rnk)$
such that $TM^\perp$ is identified with $E$ {\rm (}together with the metric and connection{\rm )},
and $B$ and $S$ coincide with the second fundamental form and the shape operator associated
with $f$.
\end{corollary}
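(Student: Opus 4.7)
The plan is to derive this corollary directly from the implication $(\text{i})\Rightarrow(\text{iii})$ of Theorem \ref{thm_equivalence of 3 formulations}. The hypotheses of the corollary have been tailored to coincide exactly with those of the equivalence theorem: the simple-connectedness of $M$, the $\woneploc$ regularity of $g$ with $p>n$, the auxiliary bundle $(E,M,\real^k)$ equipped with a $\woneploc$ metric $g^E$ and a compatible $\lploc$ connection $\na^E$, and the symmetry condition relating $B$ and $S$. First I would simply observe that these hypotheses are identical, hence the equivalence theorem applies verbatim.

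Invoking $(\text{i})\Rightarrow(\text{iii})$ in Theorem \ref{thm_equivalence of 3 formulations} then produces a global isometric immersion $f\in\wtwoploc(M;\rnk)$, unique modulo the Euclidean motion group $\real^{n+k}\rtimes O(n+k)$, together with a bundle isomorphism between $(E,g^E,\na^E)$ and the induced normal bundle $(T(fM)^\perp, \langle\cdot,\cdot\rangle|_{TM^\perp}, \nap)$ arising from the orthogonal splitting $T\rnk\simeq T(fM)\oplus T(fM)^\perp$, under which the tensor field $B$ corresponds to the second fundamental form of $f$. The identification of $S$ with the shape operator of $f$ is then automatic from the algebraic relation
\begin{equation*}
\langle B(X,Y),\eta\rangle = -\langle S_\eta(X), Y\rangle,
\end{equation*}
which is preserved by the bundle isomorphism and is precisely the definition of the shape operator via \eqref{eqn_shape operator and B} in \S 2.

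All the substantive analytic and geometric content, namely the construction of $f$ via the Cartan formalism, the $\wtwoploc$ regularity, and the uniqueness modulo Euclidean motions, is already packaged inside Theorem \ref{thm_equivalence of 3 formulations}. In this sense there is no separate obstacle here; the role of Corollary \ref{cor_ realisation of mnfd} is to distill the equivalence into a direct PDE-to-geometry statement, emphasizing that the distributional solvability of the GCR system on a simply-connected Riemannian manifold with lower regularity suffices, by itself, to guarantee a global intrinsic realization as an isometric immersion into the Euclidean space, independent of any chart. The only conceptual point worth flagging is that the hypotheses of the corollary are expressed entirely in terms of intrinsic data on $(M,g)$ and the abstract bundle $E$, so that the passage to the extrinsic object $f$ through Theorem \ref{thm_equivalence of 3 formulations} is exactly the desired bridge from PDE to geometry.
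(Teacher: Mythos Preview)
Your proposal is correct and matches the paper's approach exactly: the paper presents Corollary \ref{cor_ realisation of mnfd} as an immediate consequence of the implication ${\rm (i)}\Rightarrow{\rm (iii)}$ in Theorem \ref{thm_equivalence of 3 formulations}, with no separate proof given. Your additional remark that the identification of $S$ with the shape operator follows from the defining relation \eqref{eqn_shape operator and B} is a reasonable elaboration that the paper leaves implicit.
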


\subsection{Cartan Formalism}
We now introduce a useful tool, the Cartan formalism,
for isometric immersions.
We sketch some results directly pertaining to isometric immersions. For an local isometric
immersion $f: U\subset M^n \hookrightarrow \real^{n+k}$,
we adopt the index convention as in \cite{Ten71}:
\begin{equation*}
1\leq i,j \leq n;\qquad 1\leq a,b,c,d,e \leq n+k; \qquad n+1 \leq \alpha,\beta,\gamma \leq n+k.
\end{equation*}
In the sequel, our setting is always the same as in Theorem \ref{thm_equivalence of 3 formulations}.

On a local chart $U \subset M$ where the vector bundle $E$ is trivialized
({\it i.e.}, $E|_U$ is diffeomorphic to $U\times \real^k$),
let $\{\omega^i\} \subset \Omega^1(U)$ be an orthonormal co-frame dual to the orthonormal
frame $\{\partial_i\} \subset \Gamma(TU)$. The latter is called a moving frame adapted to $U$.
The Cartan  formalism is thus also known as the {\it method of moving frames}.

It is well-known that the GCR equations are equivalent to the following two systems:
\begin{equation}\label{eqn_first structure eqn}
d\omega^i = \sum_j \omega^j \wedge \omega^i_j,
\end{equation}
\begin{equation}\label{eqn_second structure eqn}
d\omega^a_b = - \sum_c \omega_b^c \wedge \omega^a_c,
\end{equation}
where \eqref{eqn_first structure eqn}--\eqref{eqn_second structure eqn} are known
as the first and second structural equations ({\it cf.} \cite{Clelland17,Spi79}).
The $1$--forms $\{\omega^a_b\}$ are defined as follows:
Let $\{\eta_{n+1}, \ldots, \eta_{n+k}\} \subset \Gamma(E)$
be an orthonormal basis for fibre $\real^k$ of bundle $E$ over the trivialized chart $U$. Then set
\begin{eqnarray}
&&\omega^i_j (\partial_k):= \langle \na_{\partial_k}\partial_j, \partial_i \rangle,
\label{eqn_def of connection form 1}\\
&&\omega^i_\alpha(\partial_j)=-\omega^\alpha_i(\partial_j):=\langle B(\partial_i, \partial_j),\eta_\alpha\rangle,
\label{eqn_def of connection form 2}\\
&&\omega^\alpha_\beta (\partial_j):= \langle\na^E_{\partial_j}\eta_\alpha, \eta_\beta \rangle.
\label{eqn_def of connection form 3}
\end{eqnarray}
The structural equations \eqref{eqn_first structure eqn}--\eqref{eqn_second structure eqn},
together with the definitions for the connection form in local coordinates,
{\it i.e.}, Eqs. \eqref{eqn_def of connection form 1}--\eqref{eqn_def of connection form 3},
are referred to as the Cartan formalism.

It is both convenient and conceptually important to introduce the following short-hand notations: Write
\begin{equation}
 W= \{\omega^a_b\}\in \Omega^1(U; \mathfrak{so}(n+k)),
\end{equation}
which is a field of $1$--form-valued anti-symmetric
matrices (or equivalently, the field of anti-symmetric matrix-valued $1$--forms).
Heuristically, $W$ packages together the second fundamental form and normal connection.
We also write
\begin{equation}
w=(\omega^1, \ldots, \omega^n, 0, \ldots, 0)\in \Omega^1(U; \rnk).
\end{equation}

As a result, the structural equations can be written as
\begin{equation}\label{short-handed structure equations}
dw = w\wedge W, \qquad dW+W\wedge W=0,
\end{equation}
where operator $\wedge$ between two matrix-valued differential forms denotes the wedge product
of differential forms together with matrix multiplication.
In the sequel, we always adhere to the practice of writing (matrix)
Lie group-valued PDEs in short-handed forms
as in \eqref{short-handed structure equations}.

We remark that the structure equations, as well as various other Lie group-valued equations we will introduce below,
are \emph{intrinsic}. That is, these equations are independent of the {\it moving frames},
hence are natural in coordinate-free notations.
This ensures the global and intrinsic nature of the proof of Theorem \ref{thm_equivalence of 3 formulations},
which is the content in \S 5.3 below. For more details on the Cartan formalism, see \cite{Spi79}.

\subsection{Proof of Theorem \ref{thm_equivalence of 3 formulations}}
With the Cartan formalism introduced,
we are now in the position to prove the main result of this section,
{\it i.e.}, Theorem \ref{thm_equivalence of 3 formulations}.
Our proof is intrinsic and global in nature, {\it i.e.},
covariant with respect to the change of local frames.
We divide the proof in seven steps.

\smallskip
{\bf 1.} We first notice the equivalence between the GCR equations and the Cartan formalism ({\it cf}. \cite{Spi79}).
Also, it is well-known that the existence of a local isometric immersion implies the GCR equations ({\it cf}. \cite{DoC92}).
Although the above classical results are established in the $C^\infty$ category in \cite{DoC92,Spi79},
it is easy to check that the proofs remain unaltered for the lower regularity case,
since $g\in W^{1,p}_{\rm loc}$ ensures that all the calculations involved make sense in the distributional sense.
Thus, it remains to prove ${\rm (ii)}\Rightarrow {\rm (iii)}$, {\it i.e.},
the Cartan formalism implies the existence of an isometric immersion.
		
We follow closely the arguments in Tenenblat \cite{Ten71} for the $C^\infty$ case.
We first prove the local version of the theorem
and then extend to the global version on simply-connected manifolds via topological arguments.

\smallskip
{\bf 2.} Recall that $W=\{\omega^a_b\}\in \Omega^1(U; \mathfrak{so}(n+k))$
and $w=(\omega^1, \ldots, \omega^n, 0, \ldots, 0)\in \Omega^1(U; \rnk)$.
In order to find an isometric immersion based upon the structural
equations \eqref{eqn_first structure eqn}--\eqref{eqn_second structure eqn},
we first solve for an affine map $A$ (taking $\partial_j$ to $\eta_\alpha$),
which essentially consists of the components of the normal affine connection,
and then the isometric immersion $f$ is constructed from $A$.

We first carry out such constructions locally.
More precisely, given $P_0\in U$, we find an open set $V$ with $P_0 \in V \subset U$
and a field of symmetric matrices $A=\{A^a_b\}: V \mapsto O(n+k)$, satisfying the first-order PDE system:
\begin{equation}\label{system 1}
\begin{cases}
W = dA \cdot A^\top \qquad \text{ in } V,\\
A(P_0)=A_0,
\end{cases}
\end{equation}
where $A^\top$ is the transpose of matrix $A$.
This equation means that
$$
W(X|_P)=dA(X|_P)\cdot A(P) \qquad \mbox{for $P\in V$ and $X\in \Gamma(TV)$},
$$
where the dot is the matrix multiplication.

Now, under the assumption that Eq. \eqref{system 1} is satisfied,
we next solve for function $f: V \mapsto \rnk$ such that
\begin{equation}\label{system 2}
\begin{cases}
df = w \cdot A \qquad \text{ in } V,\\
f(P_0)=f_0,
\end{cases}
\end{equation}
{\it i.e.}, $df_P(X|_P)=w(X|_P) \cdot A(P)$ for $P\in V$ and $X\in \Gamma(TV)$.
Following \cite{Mar05,Mar07}, we call \eqref{system 1}
the Pfaff system, and call \eqref{system 2} the Poincar\'e system.
The names come from the theory of integrable systems and the Poincar\'e lemma in cohomology theory,
respectively.
	
\smallskip
{\bf 3.}
In the $C^\infty$ category, to establish the solvability of PDEs in form \eqref{system 1}--\eqref{system 2},
which can be viewed as complete integrability conditions,
we only need to check the {\it involutiveness}, in view of the Frobenius theorem.
This constitutes the arguments in \cite{Ten71}.
In the lower regularity case, we seek for the correct analogue to the Frobenius theorem.
The following lemma
can serve for this purpose:	
	
\begin{lemma}[Mardare  \cite{Mar07}]\label{Mardare's Lemmas}
The following solubility criteria hold for two types of first-order matrix
Lie group-valued PDE systems{\rm :}
\begin{enumerate}
\item[\rm (i)]
{\rm The Pfaff system, Theorem {\rm 7} of {\rm \cite{Mar05}}}{\rm :}
Let $U\subset \real^n$ be a
simply-connected open set,
$x_0 \in U$, and $M_0\in \mathfrak{gl}(n; \real)$, the space of $n\times n$
real matrices.
Then the following system{\rm :}
\begin{equation}
\frac{\partial M}{\partial x^i}=Q_i\cdot M, \,\,\, i=1,2,\ldots,n, \,\,\qquad M(x_0)=M_0,
\end{equation}
with the matrix fields $Q_i\in\lploc(U; \mathfrak{gl}(n;\real))$ for $i=1,2,\ldots, n$, and $p>n$,
has a unique solution $M\in \woneploc (U; \mathfrak{gl}(n; \real))$ if and only if the following compatibility condition holds{\rm :}
\begin{equation}\label{compatibility pfaff}
\frac{\partial Q_i}{\partial x^j} - \frac{\partial Q_j}{\partial x^i} =[Q_i, Q_j] \qquad\text{ in } \dis \,\,\,\text{ for each } i,j=1,2, \dots, n.
\end{equation}

\item[\rm (ii)]
{\rm The Poincar\'e system, Theorem {\rm 6.5} of {\rm \cite{Mar07}}}{\rm :}
Let $U\subset \real^n$ be a
simply-connected open set, $x_0 \in U$, and $\psi_0 \in \real$.
Then the following system{\rm :}
\begin{equation}
\frac{\partial \psi}{\partial x^i} =  \phi_i, \,\,\, i=1,2,\ldots, n, \,\qquad
\psi(x_0) = \psi_0,
\end{equation}
with $\phi_i \in \lploc(U)$ for $i=1,2,\ldots, n$, and $1 \leq p \leq \infty$,
has a unique solution $\psi \in \woneploc (U)$ if and only if the following compatibility condition holds:
\begin{equation}\label{compatibility poincare}
\frac{\partial \phi_i}{\partial x^j} - \frac{\partial \phi_j}{\partial x^i} = 0 \qquad \text{ in } \dis\,\,\, \text{ for each } i,j=1,2, \dots, n.
\end{equation}
\end{enumerate}
\end{lemma}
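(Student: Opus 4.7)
The plan is to treat the two statements as low-regularity versions of classical results (the Poincar\'e lemma and the Frobenius theorem for connections with vanishing curvature), and prove both by a mollification-plus-stability scheme, with (i) being substantially harder because of the quadratic nonlinearity $[Q_i, Q_j]$. Necessity in both parts is immediate from the symmetry of distributional second derivatives: for (ii), $\partial_j \partial_i \psi = \partial_i \partial_j \psi$ gives \eqref{compatibility poincare}; for (i), one differentiates $\partial_i M = Q_i M$ distributionally, uses $p>n$ so that $M \in W^{1,p}_{\rm loc} \hookrightarrow C^0$ and each product $Q_i M$ is meaningful, and compares the two orders of differentiation to obtain \eqref{compatibility pfaff} wherever $M$ is invertible (which, by continuity and $M(x_0)=M_0$, holds at least locally).

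For sufficiency in (ii), I would fix a standard mollifier $\rho_\epsilon$ and set $\phi_i^\epsilon := \phi_i \ast \rho_\epsilon$ on each relatively compact sub-domain. Convolution commutes with $\partial$, so \eqref{compatibility poincare} passes to $\phi_i^\epsilon$, and the classical Poincar\'e lemma on the simply-connected open set (via path integrals from $x_0$, with path-independence supplied by closedness plus simple connectivity) produces a smooth $\psi^\epsilon$ with $\partial_i\psi^\epsilon=\phi_i^\epsilon$ and $\psi^\epsilon(x_0)=\psi_0$. Since $\phi_i^\epsilon \to \phi_i$ in $L^p_{\rm loc}$, a Poincar\'e-type inequality pinned by the value at $x_0$ yields $\psi^\epsilon \to \psi$ in $W^{1,p}_{\rm loc}$, and the limit $\psi$ satisfies the required system. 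Uniqueness follows from the fact that differences lie in the kernel of $\partial_i$ pointwise.

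For sufficiency in (i), the obstruction is that mollification does \emph{not} preserve \eqref{compatibility pfaff} because $[Q_i,Q_j]$ is quadratic; one picks up a commutator defect $e_\epsilon := \partial_j Q_i^\epsilon - \partial_i Q_j^\epsilon - [Q_i^\epsilon, Q_j^\epsilon]$ that is only small in a negative Sobolev norm (this is precisely a commutator estimate \`a la DiPerna-Lions or Coifman-Meyer). I would therefore solve the smoothed Pfaff system $\partial_i M^\epsilon = Q_i^\epsilon M^\epsilon$, $M^\epsilon(x_0)=M_0$, along line segments emanating from $x_0$ (using the classical Frobenius theorem locally, where the defect $e_\epsilon$ can be made arbitrarily small to guarantee path-independence on a shrinking neighborhood; the simple connectivity of $U$ then extends $M^\epsilon$ globally after $\epsilon$ is small enough). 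Next I would establish uniform $W^{1,p}_{\rm loc}$ bounds on $M^\epsilon$: multiplying the equation by a test function and using $\|Q_i^\epsilon\|_{L^p}\le\|Q_i\|_{L^p}$ together with Gronwall along paths (valid because $p>n$ so $Q_i^\epsilon M^\epsilon \in L^p_{\rm loc}$), one controls $\|M^\epsilon\|_{L^\infty}$ and hence $\|\nabla M^\epsilon\|_{L^p}$. Passing to a weak limit $M^\epsilon \rightharpoonup M$ in $W^{1,p}_{\rm loc}$, the compact embedding $W^{1,p}_{\rm loc}\Subset C^0_{\rm loc}$ (from $p>n$) upgrades this to strong uniform convergence on compacts, so the nonlinear product $Q_i^\epsilon M^\epsilon$ converges to $Q_i M$ in $L^p_{\rm loc}$ and the limit $M$ solves the Pfaff system. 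Uniqueness is a Gronwall-type argument applied to a difference of two solutions along paths.

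The main obstacle is squarely the convergence argument in part (i): keeping the global path-independence of $M^\epsilon$ alive as $\epsilon \to 0$ despite the non-vanishing defect $e_\epsilon$, and justifying the passage to the limit in the \emph{nonlinear} right-hand side $Q_i^\epsilon M^\epsilon$. The hypothesis $p>n$ is used in two essential places: to give meaning to the product $Q_i M$ with $M\in W^{1,p}_{\rm loc}$ by continuity, and to provide the compactness needed for the strong-times-weak product convergence. Once these two technical hurdles are cleared, the local solution extends to all of $U$ by simple connectivity exactly as in the smooth Frobenius theorem, and the proof is complete.
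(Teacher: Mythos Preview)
The paper does not prove this lemma at all: it is stated as a result quoted from Mardare \cite{Mar05,Mar07} (with the uniqueness in (ii) attributed to Schwartz \cite{schwartz}), and is used as a black box in the proof of Theorem~\ref{thm_equivalence of 3 formulations}. So there is no ``paper's own proof'' to compare against; your proposal is an attempt to supply what the paper deliberately outsources.

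That said, your sketch has a genuine gap in part (i). Mollifying the $Q_i$ destroys the compatibility condition \eqref{compatibility pfaff} exactly because of the quadratic commutator, and once the curvature of the mollified connection is nonzero the classical Frobenius theorem gives you \emph{nothing}: there is no parallel $M^\epsilon$ with $\partial_i M^\epsilon = Q_i^\epsilon M^\epsilon$, not even locally, not even approximately in a sense you can control. Your proposed workaround (``the defect $e_\epsilon$ can be made arbitrarily small to guarantee path-independence on a shrinking neighborhood'') is not an argument: smallness of the curvature in $W^{-1,p}$ does not give any mechanism for producing a solution of the first-order system, and the neighborhood on which you could hope to force approximate integrability shrinks with $\epsilon$, so the scheme does not close. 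Mardare's actual proof does not mollify the coefficients; it works directly with $Q_i\in L^p_{\rm loc}$, $p>n$, constructing the solution on small cubes by integrating the ODE $\partial_1 M = Q_1 M$ along the $x^1$-direction with initial data built inductively from lower-dimensional problems, and then showing \emph{a posteriori} that the remaining equations $\partial_i M = Q_i M$ hold by differentiating under the integral and using \eqref{compatibility pfaff} distributionally. The key technical input is that $p>n$ makes the ODE along lines well-posed (Carath\'eodory theory with $L^p$ coefficients on a one-dimensional interval) and that the resulting $M$ is continuous, so products like $Q_i M$ are in $L^p_{\rm loc}$. Your treatment of (ii) via mollification is fine and essentially standard.
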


We also remark that the uniqueness in (ii) is proved by Schwartz \cite{schwartz}.
Thanks to Lemma \ref{Mardare's Lemmas},
solving for systems \eqref{system 1} and \eqref{system 2} can be reduced to
checking the compatibility conditions in the form of
Eqs. \eqref{compatibility pfaff} and \eqref{compatibility poincare},
provided that the regularity assumptions are satisfied.

\smallskip
{\bf 4.} Let us first solve for  the Pfaff system \eqref{system 1}.
Performing contraction with the moving frame $\{\partial_a\}$, one has
\begin{equation}
dA(\partial_a) = \frac{\partial A}{\partial{x^a}}=  W(\partial_a)\cdot A,
\end{equation}
which is a Pfaff system on $\mathfrak{gl}(n+k;\real)$.
By assumption, $g, g^E \in \woneploc$, and $B, \na^E \in \lploc$,
so that $W\in L^p_{\rm loc}$ with $p>n$.
This verifies the regularity hypotheses in Lemma \ref{Mardare's Lemmas}(i).

The compatibility criterion \eqref{compatibility pfaff} can be written in local coordinates as
\begin{equation}\label{x}
\partial_b \big(\omega^c_d(\partial_a)\big) - \partial_a \big(\omega^c_d(\partial_b)\big)
= \sum_e [\omega^c_e (\partial_a), \omega^e_d(\partial_b)].
\end{equation}
Recall formula \eqref{eqn_first equation} in \S \ref{Section 2}: As a special case,
for any $1$--form $\alpha$ and vector fields $(X, Y)$,
\begin{equation}\label{5.17a}
d\alpha(X,Y)= X\alpha(Y)-Y\alpha(X) - \alpha([X,Y]).
\end{equation}
Thus, taking $\alpha=\omega^c_d$, $X=\partial_b$, and $Y=\partial_a$, we have
\begin{equation}
d\omega^c_d (\partial_a, \partial_b)
= \partial_b \big(\omega^c_d(\partial_a)\big)
- \partial_a \big(\omega^c_d(\partial_b)\big).
\end{equation}

Moreover, the Lie bracket on the right-hand side of Eq. \eqref{x} can be written
as the wedge-product of $1$--forms:
\begin{equation*}
\sum_e [\omega^c_e (\partial_a), \omega^e_d(\partial_b)]
= \sum_e \big(\omega^c_e\wedge \omega^e_d\big)(\partial_a, \partial_b).
\end{equation*}

Combining these two observations, the compatibility criterion for the Pfaff system \eqref{system 1}
is equivalent to
\begin{equation*}
d\omega^c_d (\partial_a, \partial_b) = \sum_e \big(\omega^e_d \wedge \omega^c_e\big) (\partial_a, \partial_b).
\end{equation*}
This is precisely the second structural equation \eqref{eqn_second structure eqn}.
Thus, by Lemma \ref{Mardare's Lemmas}(i),
we can find $A\in W^{1,p}_{\rm loc}(V; \mathfrak{gl}(n+k;\real))$ satisfying Eq. \eqref{system 1}.
Using equation $W=dA\cdot A^\top$ and the fact that $W\in \mathfrak{so}(n+k)$,
we deduce that $A$ maps into $O(n+k)$.

\smallskip
{\bf 5.} Next, we solve for immersion $f$ from the Poincar\'e system \eqref{system 2}.
Since $w\cdot A \in W^{1,p}_{\rm loc}$ with $p>n$, the regularity assumption in Lemma \ref{Mardare's Lemmas}(ii)
is satisfied, both for $w\cdot A$ and its first derivatives.
Thus, if we verify the compatibility condition in the form of Eq. \eqref{compatibility poincare},
we can establish the existence of $f\in W^{2,p}_{\rm loc}$, thanks to Lemma \ref{Mardare's Lemmas}.

Now, observe that the compatibility condition for the Poincar\'e system \eqref{system 2} is equivalent to
\begin{equation*}
\partial_b \big(\omega(\partial_a)\cdot A\big) - \partial_a \big(\omega(\partial_b) \cdot A \big) = 0
\end{equation*}
for each index $1\leq a,b \leq n+k$.
Invoking identity \eqref{5.17a} once more,
we can express
\begin{equation*}
d(w\cdot A)(\partial_a, \partial_b) - \partial_b \big(\omega(\partial_a)\cdot A\big) + \partial_a \big(\omega(\partial_b) \cdot A \big)=0.
\end{equation*}
Therefore, it suffices to show that
\begin{equation}\label{dwA=0}
d(w\cdot A)=0.
\end{equation}

Indeed, using the Pfaff system \eqref{system 1} solved above, we have
\begin{align}\label{id1}
d(w\cdot A)= dw \cdot A - w \wedge dA = dw \cdot A - w \wedge (W\cdot A).
\end{align}
However, the first structural equation \eqref{eqn_first structure eqn} can be expressed as
\begin{equation*}
dw^i = \sum_a \omega^a \wedge \omega^i_a = \sum_j \omega^j \wedge \omega_j^i,
\end{equation*}
since $w_\beta=0$ for any $n+1 \leq \beta \leq n+k$ by construction.
Then
\begin{equation}\label{id2}
dw\cdot A = w\wedge (W\cdot A).
\end{equation}
Putting Eqs. \eqref{id1}--\eqref{id2} together, we deduce \eqref{dwA=0},
which is equivalent to the compatibility condition for the Poincar\'e system \eqref{system 2}.
As a consequence, we find that
$f\in W^{2,p}_{\rm loc}(V;\real^{n+k})$ solves Eq. \eqref{system 2}.

\smallskip
{\bf 6.}
With the solution to the Pfaff system \eqref{system 1} and the Poincar\'e system \eqref{system 2}
associated with our isometric immersions problem,
it remains to check that $f$ is indeed the immersion for which we are seeking.
To this end, we can define a new local moving frame $\{e_1, \ldots, e_{n+k}\}$ by
\begin{equation}\label{eqn_define the new frame in rnk}
e_i := df(\partial_i), \qquad  e_\alpha:= \sum_b A^\alpha_b \partial_b.
\end{equation}
Then, following precisely the same arguments as those in pages 32--35 in \cite{Ten71},
we can show that such a construction gives the correct normal bundle metric,
normal affine connection, and second fundamental form.

Moreover, observe that $df= w\,\cdot A$ from Eq. \eqref{system 2},
$A \in O(n+k)$ is a field of nonsingular matrices, and $\{\omega^1, \ldots, \omega^n\}$
are linearly independent, so that $df \neq 0$ in $\woneploc$.
Since $p > n$, the Sobolev embedding yields that $df \neq 0$ almost everywhere,
which verifies that $f$ is indeed an immersion.
The almost everywhere uniqueness of $f$ follows from Lemma \ref{Mardare's Lemmas}
and the $\real^{n+k}\rtimes O(n+k)$--symmetry of the Euclidean space.

\smallskip
{\bf 7.} It remains to globalize our arguments
for simply-connected manifolds.
This follows from a standard argument in topology.

Given any two points $x \neq y \in M$, we connect them by a continuous
curve (again since $g \in \woneploc \hookrightarrow C^0_{\rm loc}$ for $p>n$),
denoted by $\gamma: [0,1]\mapsto M$ with $\gamma(0)=x$ and $\gamma(1)=y$.
Let $f$ be the $\wtwoploc$ isometric immersion in a neighbourhood of $x$,
whose existence is guaranteed by the preceding steps of the same proof.
Cover curve $\gamma([0,1])$ by finitely many charts $\{V_1,\ldots,V_N\}$.
By the uniqueness statements in Lemma \ref{Mardare's Lemmas},
we can extend the isometric immersion $f$ to $\bigcup_{i=1}^N V_i$,
especially including a neighbourhood of $y$.

Thus, it suffices to verify that the extension of $f$ is independent of the choice of $\gamma$.
Indeed, if $\eta:[0,1]\mapsto M$ is another continuous curve connecting $x$ and $y$,
by concatenating $\gamma$ with $\eta$, we can obtain a loop $L\subset M$.
As $M$ is simply-connected, the restriction $f|_L$ is homotopic to a constant map
so that $(f\circ\gamma)(1)=(f\circ\eta)(1)$.
In this way, we have verified that $f$ can be extended to a global isometric
immersion of $M$ into $\rnk$, provided that $M$ is simply-connected.

This completes the proof of Theorem \ref{thm_equivalence of 3 formulations}.

\medskip
As a corollary of Theorems \ref{thm_main theorem weak continuity} and \ref{thm_equivalence of 3 formulations},
we can deduce the weak rigidity of isometric immersions.

\begin{corollary}[Weak rigidity of isometric immersions]\label{cor_Weak continuity of isometric immersions}
Let $M$ be an $n$-dimensional simply-connected Riemannian manifold with metric $g\in W^{1,p}_{\rm loc}$ for $p>n$.
Suppose that $\{f^\epsilon\}$ is a family of isometric immersions of $M$ into $\rnk$, uniformly bounded
in $W^{2,p}_{\rm loc}(M; \rnk)$, whose second fundamental forms and normal connections are $\{B^\epsilon\}$
and $\{\nep\}$ respectively.
Then, after passing to subsequences, $\{f^\epsilon\}$ converges to $\bar{f}$
weakly in $W_{{\rm loc}}^{2,p}$
which is still an isometric
immersion $\bar{f}: (M,g) \hookrightarrow \rnk$.
Moreover, the corresponding second fundamental form $\bar{B}$ is a limit point of $\{B^\epsilon\}$,
and the corresponding normal connection $\overline{\na^\perp}$
is a limit point of $\{\nep\}$, both taken in the $L^p_{\rm loc}$ topology.
\end{corollary}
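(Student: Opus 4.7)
The plan is to combine the weak compactness of $\{f^\epsilon\}$ in $\wtwoploc$ with the two main theorems of the paper: the global weak rigidity of the GCR equations (Theorem \ref{thm_main theorem weak continuity}) and the equivalence theorem (Theorem \ref{thm_equivalence of 3 formulations}). First, by reflexivity of $\wtwoploc$ and the Banach--Alaoglu theorem, I extract a subsequence $f^\epsilon \rightharpoonup \bar f$ weakly in $\wtwoploc(M;\rnk)$. Since $p>n$, the Rellich--Kondrachov theorem gives the compact embedding $\wtwoploc \Subset C^{1,\alpha}_{\rm loc}$ for some $\alpha\in(0,1)$, so $df^\epsilon \to d\bar f$ strongly in $C^{0,\alpha}_{\rm loc}$. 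Passing to the limit in the pointwise isometric identity $\langle df^\epsilon(X), df^\epsilon(Y)\rangle = g(X,Y)$ immediately shows that $\bar f$ is itself an isometric immersion of $(M,g)$ into $\rnk$.

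Next, by the uniform $\lploc$ bound on $\{(\bep,\nep)\}$, I extract further subsequences so that $\bep \rightharpoonup \bar B$ and $\nep \rightharpoonup \overline{\nap}$ weakly in $\lploc$. Since each $(\bep,\nep)$ arises from the isometric immersion $f^\epsilon$, the implication ${\rm (iii)}\Rightarrow{\rm (i)}$ of Theorem \ref{thm_equivalence of 3 formulations} tells us that each pair $(\bep,\nep)$ satisfies the GCR equations in the distributional sense. Theorem \ref{thm_main theorem weak continuity} then guarantees that the weak limits $(\bar B,\overline{\nap})$ also solve the GCR equations.

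Finally, I identify $\bar B$ and $\overline{\nap}$ with the intrinsic data of $\bar f$. In any local chart the Gauss formula reads $\bep(\p_i,\p_j) = \p_i\p_j f^\epsilon - \sum_k \Gamma^k_{ij}\,\p_k f^\epsilon$, where $\Gamma^k_{ij}$ depends only on $g$; coupling the strong $C^{0,\alpha}_{\rm loc}$ convergence of $\p f^\epsilon$ with the weak $L^p_{\rm loc}$ convergence of $\p^2 f^\epsilon$ yields $\bep \rightharpoonup B_{\bar f}$ in the distributional sense, so $\bar B = B_{\bar f}$ by uniqueness of weak limits. An analogous local computation for the Weingarten formula, or alternatively an invocation of the uniqueness part of Theorem \ref{thm_equivalence of 3 formulations} after normalizing the Euclidean symmetry via the limits of $f^\epsilon(P_0)$ and $df^\epsilon(P_0)$ at some base point $P_0$, identifies $\overline{\nap}$ with the normal connection of $\bar f$.

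The main obstacle is this last identification: the normal bundles $T(f^\epsilon M)^\perp$ depend on $\epsilon$, so a priori $\bep$ and $\nep$ are sections of different bundles and cannot be compared directly. The saving device is the strong $C^{1,\alpha}_{\rm loc}$ convergence of $df^\epsilon$, which forces the orthogonal projections onto the tangent and normal subspaces of $\rnk$ to converge uniformly on compact sets; this lets all the quantities involved be viewed as sections of the common ambient bundle $M\times\rnk$, so that weak limits, the GCR equations, and the Gauss--Weingarten formulas for $\bar f$ can be sensibly compared.
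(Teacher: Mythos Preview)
Your proof is correct and takes a somewhat more direct route than the paper's. The paper argues as follows: from the uniform $W^{2,p}_{\rm loc}$ bound it extracts weak $L^p_{\rm loc}$ limits $(\overline{B},\overline{\na^\perp})$, invokes Theorem~\ref{thm_equivalence of 3 formulations} (direction (iii)$\Rightarrow$(i)) to see each $(\bep,\nep)$ satisfies the GCR equations, applies Theorem~\ref{thm_main theorem weak continuity} so that $(\overline{B},\overline{\na^\perp})$ still solves GCR, and then invokes Theorem~\ref{thm_equivalence of 3 formulations} \emph{again} (direction (i)$\Rightarrow$(iii)) to \emph{reconstruct} an isometric immersion $\bar f$ with this data, finally identifying it with the weak limit of $\{f^\epsilon\}$ by uniqueness. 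In contrast, you first show directly that the weak $W^{2,p}_{\rm loc}$ limit $\bar f$ is an isometric immersion, using only the compact embedding $W^{2,p}_{\rm loc}\Subset C^{1,\alpha}_{\rm loc}$ for $p>n$ and passing to the limit in $\langle df^\epsilon(X),df^\epsilon(Y)\rangle=g(X,Y)$; then you identify $\bar B=B_{\bar f}$ and $\overline{\na^\perp}=\na^\perp_{\bar f}$ via the Gauss--Weingarten formulas in local charts.

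Your route has two advantages. First, the step ``$\bar f$ is an isometric immersion'' is obtained from elementary Sobolev compactness rather than from the realization machinery of Theorem~\ref{thm_equivalence of 3 formulations}. Second, once you identify $\bar B$ and $\overline{\na^\perp}$ directly from the Gauss--Weingarten formulas, the appeal to Theorem~\ref{thm_main theorem weak continuity} becomes logically superfluous: the fact that $(\bar B,\overline{\na^\perp})$ satisfies GCR is then automatic from $\bar f$ being an isometric immersion. The paper's route, on the other hand, makes the weak rigidity of GCR the essential ingredient and avoids the local Gauss--Weingarten computation, at the cost of a somewhat terse identification of the reconstructed $\bar f$ (unique only modulo $\rnk\rtimes O(n+k)$) with the weak limit of $\{f^\epsilon\}$. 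Your explicit discussion of the varying normal bundles $T(f^\epsilon M)^\perp$ and their resolution via the $C^{0,\alpha}_{\rm loc}$ convergence of the orthogonal projections in the fixed ambient bundle $M\times\rnk$ is a point the paper leaves implicit.
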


\begin{proof}
For a sequence $\{f^\epsilon\}$ of isometric immersions, uniformly bounded
in $W^{2,p}_{\rm loc}(M; \rnk)$, we can define the corresponding second fundamental
forms and normal affine connections, denoted by $(\bep,\nep)$.

By Theorem \ref{thm_equivalence of 3 formulations},
$(\bep,\nep)$ satisfy the GCR equations in the distributional sense
and are uniformly bounded in the $L^p$ norm.
Let $(\overline{B},\overline{\na^\perp})$ be a weak limit of this family.
Now, in view of Theorem \ref{thm_main theorem weak continuity} on the weak rigidity of
the GCR equations,
$(\overline{B},\overline{\na^\perp})$ is still a solution to the GCR equations in the distributional sense.
Thus, invoking Theorem \ref{thm_equivalence of 3 formulations} again,
we can find a $W^{2,p}_{\rm loc}$ isometric immersion $\overline{f}$,
for which  $\overline{B}$ and $\overline{\na^\perp}$ are its second fundamental form
and normal connection respectively.
By the uniqueness of distributional limits, $\overline{f}$ must coincide with
the weak limit of $\{f^\epsilon\}$.

Therefore, we have verified that the weak limit of a sequence of isometric immersions of manifold $M$
is still an isometric immersion of $M$, with corresponding second fundamental form and normal connection.
This completes the proof.
\end{proof}

One natural question arises at this stage is about the criteria for the existence
of global isometric immersions for a non-simply-connected manifold $M$ with $W^{1,p}_{\rm loc}$ metrics,
especially for the case of the minimal target dimension ({\it i.e.}, the Janet dimension).
However, as far as we have known,
it is still open, primarily owing to some topological obstructions
to the GCR equations.
Suppose that there is a loop $\gamma$ generating non-trivial homotopy on $M$,
it is far from being clear if one can find a well-defined immersion along the entire loop.
The problems on global isometric immersions/embeddings for general manifolds
remain vastly open in the large.
See Schoen-Yau \cite{SY} and Bryant-Griffith-Yang \cite{BGY} for further discussions.

Therefore, the best we may say for non-simply-connected manifolds are
the \emph{local} versions of Theorem \ref{thm_equivalence of 3 formulations}
and Corollaries \ref{cor_ realisation of mnfd} and \ref{cor_Weak continuity of isometric immersions}.

\section{The Critical Case:  $n=p=2$}

Our main geometric result established in \S 5, {\it i.e.}, Theorem \ref{thm_equivalence of 3 formulations},
deals with the Riemannian manifolds with $W^{1,p}_{\rm loc}$ metrics for $p>n$.
Even for the weak rigidity of the GCR equations, we still
require $p>2$, as in \S \ref{section_GCR}.
Therefore, $n=p=2$ becomes a critical case, from both the geometric perspectives
and the PDE point of view.
This is what we investigate in this section.
We focus on a $2$-dimensional manifold $M$ with some Riemannian metric $g\in H^1_{\rm loc}$.

The difficulty lies in the insufficiency of regularity for certain Sobolev embeddings.
Notice that, for $n=p=2$, the second fundamental forms and normal connections $(\bep,\nep)$
are only uniformly bounded in $L^2_{\rm loc}$, so the quadratic nonlinearities
in the GCR equations are at most bounded in $L^1_{\rm loc}$.
However, $L^1_{\rm loc}$ cannot be embedded into $H^{-1}_{\rm loc}$.
This can be seen via the dual spaces,
since $H^1(\real^2)$ is only continuously,
but not compactly, embedded into $BMO (\real^2)$,
which is the space of functions of bounded mean oscillations; moreover, $BMO (\real^2)$ is strictly contained in $L^\infty(\real^2)$.
A standard example for $f \in H^{1}(\real^2) \setminus L^\infty(\real^2) $
is $f(x)=\log\log(|x|^{-1})\chi_{B_1(0)}$.

However, if the codimension of the immersion is $1$, {\it i.e.},
the surface $M$ is immersed into $\real^3$, we can still obtain the weak rigidity.
For this purpose, we need the corresponding critical case
of Theorem \ref{thm_main thm goemetric divcurl},
in which $\{d\omega^\epsilon\}$ and $\{\delta\tau^\epsilon\}$ are contained
in compact subsets of $W^{-1,1}_{\rm loc}$ spaces.
This has been treated in Conti-Dolzmann-M\"{u}ller in \cite{CDM}.

We now state and prove a slight variant of the critical case result in \cite{CDM},
formulated for global differential forms on the Riemannian manifolds.
This can be achieved by combining the global arguments on the manifolds developed above with the
arguments in \cite{CDM}; thus, its proof will be only sketched briefly.

\begin{theorem}\label{thm_critical case, div curl lemma}
Let $(M,g)$ be an $n$-dimensional manifold.
Let $\{\omega^\epsilon\}, \{\tau^\epsilon\}\subset L^2_{\rm loc}(M;\ptens)$
be two families of  differential $q$--forms, for $0\leq q\leq n$. Suppose that
\begin{enumerate}
\item[\rm (i)]
$\omega^\epsilon \rightharpoonup \overline{\omega}$ weakly in $L^2$,
and $\tau^\epsilon \rightharpoonup \overline{\tau}$ weakly in $L^2$;
\item[\rm (ii)] There are compact subsets of the corresponding Sobolev spaces, $K_d$ and $K_\delta$, such that
\begin{equation*}
\begin{cases}
\{d\omega^\epsilon\}\subset K_d \Subset W^{-1,1}_{\rm loc}(M;\bigwedge^{q+1}T^*M),\\
\{\delta\tau^\epsilon\}\subset K_\delta \Subset W^{-1,1}_{\rm loc}(M;\bigwedge^{q-1}T^*M){\rm ;}
\end{cases}
\end{equation*}
\item[\rm (iii)] $\{\langle\omega^\epsilon,\tau^\epsilon\rangle\}$ is  equi-integrable.
\end{enumerate}
Then $\langle\omega^\epsilon, \tau^\epsilon\rangle$ converges to $\langle\overline{\omega}, \overline{\tau}\rangle$
in the sense of distributions:
\begin{equation*}
\int_M \langle\omega^\epsilon, \tau^\epsilon\rangle\psi\, {\rm d}V_g
\longrightarrow \int_M \langle\overline{\omega}, \overline{\tau}\rangle \psi\, {\rm d}V_g
\qquad \mbox{for any $\psi \in C^\infty_{c}(M)$}.
\end{equation*}
\end{theorem}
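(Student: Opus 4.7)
The plan follows the blueprint of the proof of Theorem \ref{thm_main thm goemetric divcurl}, with one critical input replaced. First, via a partition-of-unity and nonnegative cutoff argument identical to the opening step of that proof, I would reduce matters to a closed orientable $M$ with the test function $\psi$ absorbed into $\omega^\epsilon$ and $\tau^\epsilon$. Hypotheses (ii) and (iii) are stable under multiplication by a smooth compactly supported function: the commutators $[d,\psi]$ and $[\delta,\psi]$ are zeroth-order, hence bounded from $L^2$ into itself and thus into $W^{-1,1}$, while the equi-integrability in (iii) is trivially preserved after multiplication by a bounded function.

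Next, I would apply the Hodge decomposition of Theorem \ref{thm_hodge} to both sequences, writing $\omega^\epsilon = \pi_H\omega^\epsilon + d\alpha^\epsilon + \delta\beta^\epsilon$ and $\tau^\epsilon = \pi_H\tau^\epsilon + d\tilde\alpha^\epsilon + \delta\tilde\beta^\epsilon$, with the auxiliary forms constructed from the Green operator $G$ and the commutation of $d,\delta$ with $G$. The harmonic projections live in the finite-dimensional space $\harmp$, so they converge strongly and their contribution to the pairing passes to the limit trivially. The $L^2$-orthogonality of the Hodge summands annihilates same-type cross products, leaving only mixed terms of the schematic form $\langle d\alpha^\epsilon,\delta\tilde\beta^\epsilon\rangle$ and its symmetric counterpart $\langle \delta\beta^\epsilon, d\tilde\alpha^\epsilon\rangle$.

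The main obstacle, and the precise reason the proof of Theorem \ref{thm_main thm goemetric divcurl} breaks down, is that $L^1$ does not embed into $H^{-1}$ in dimension two, so the elliptic-regularity route via $H^{-1}$ estimates on $\Delta G$ is unavailable. The remedy is the compensated-compactness phenomenon of Coifman-Lions-Meyer-Semmes: the pointwise pairing of an exact $L^2$ form with a co-closed $L^2$ form belongs to the local Hardy space $h^1$, whose duality with local $BMO$ yields an enhanced distributional regularity beyond what the $L^1$ bound alone provides. Working in a finite atlas in which $\bigwedge^q T^*M$ is trivialized, I would transfer the mixed terms to Euclidean gradients and divergence-free vector fields via the Hodge summands above, and apply the CLMS estimate chart-by-chart to obtain a uniform local-$h^1$ control.

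The role of the equi-integrability hypothesis (iii) is precisely to preclude the concentration phenomenon exhibited by the Fakir's carpet in the second remark following Theorem \ref{thm_abstract compensated compactness}: pure $L^1$ control allows bubble-like oscillations whose $h^1$ defect measures survive testing against smooth functions, whereas equi-integrability kills such defects via a Dunford-Pettis/Vitali argument. Combining the local $h^1$ estimate with the no-concentration input from (iii) reproduces the core step of the Conti-Dolzmann-M\"uller theorem in each chart; a final partition-of-unity assembly, together with the harmonic-projection contribution from Step 2, globalizes the distributional convergence to all of $M$ and completes the proof.
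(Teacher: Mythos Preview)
Your Hodge-decomposition step contains a genuine error. On a closed manifold with the test function already absorbed, the $L^2$-orthogonality of the Hodge summands annihilates precisely the \emph{mixed} pairings $\langle d\alpha^\epsilon, \delta\tilde\beta^\epsilon\rangle_{L^2}$ and $\langle \delta\beta^\epsilon, d\tilde\alpha^\epsilon\rangle_{L^2}$, since exact forms are orthogonal to co-exact forms. What survives are the \emph{same-type} pairings $\langle d\alpha^\epsilon, d\tilde\alpha^\epsilon\rangle_{L^2}$ and $\langle \delta\beta^\epsilon, \delta\tilde\beta^\epsilon\rangle_{L^2}$, together with the harmonic term. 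These surviving pairings are not of div-curl type --- both factors are exact (respectively co-exact) --- so the Coifman--Lions--Meyer--Semmes estimate does not apply to them. Moreover, your outline never explains how hypothesis (ii), the $W^{-1,1}$ compactness of $d\omega^\epsilon$ and $\delta\tau^\epsilon$, is actually used; an $h^1$ bound plus equi-integrability yields at most weak $L^1$ sequential compactness via Dunford--Pettis, but does not by itself identify the distributional limit as $\langle\overline\omega,\overline\tau\rangle$.

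The paper takes an entirely different route, adapting Conti--Dolzmann--M\"uller. After the reduction to closed $M$ and subtracting the weak limits, one applies Chacon's biting lemma to produce small exceptional sets $K_\epsilon,\tilde K_\epsilon$ off which $|\omega^\epsilon|^2$ and $|\tau^\epsilon|^2$ become equi-integrable. Hypothesis (iii) is used exactly here, to show that the truncated pairing $\langle\omega^\epsilon\chi_{M\setminus K_\epsilon},\tau^\epsilon\chi_{M\setminus\tilde K_\epsilon}\rangle$ has the same distributional limit as the original one. A Lipschitz-truncation argument from \cite{CDM} then upgrades the $W^{-1,1}$ compactness in hypothesis (ii) to $H^{-1}$ pre-compactness for the truncated sequences, at which point the standard intrinsic div-curl lemma, Theorem~\ref{thm_main thm goemetric divcurl}, finishes the job.
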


\begin{proof}
First of all, let us make two reductions. First, as in the proof for Theorem \ref{thm_main thm goemetric divcurl},
it suffices to prove for compact orientable manifold $M$; Second, without loss of generality,
we may take $\bar{\omega}=\bar{\tau}=0$.

Next we  perform a truncation to $\{\omega^\epsilon\}$ and $\{\tau^\epsilon\}$.
By assumption (i), sequence $\{|\omega^\epsilon|^2\}$ is weakly convergent in $L^1$.
Hence, applying Chacon's biting lemma ({\it cf.} \cite{BM}),
one can find subsets $K_\epsilon\subset M$
such that $|K_\epsilon|_{g}:=\int_M  \chi_{K_{\epsilon}}dV_g\rightarrow 0$
and $\{|\omega^\epsilon|^2 \chi_{M\setminus K_\epsilon}\}$ is equi-integrable.
We define the truncated differential
forms $\tilde{\omega}^\epsilon:=\omega^\epsilon\chi_{M\setminus K_\epsilon}$.
Similarly, we take $\tilde{K}_\epsilon\subset M$ such that $|\tilde{K}_\epsilon|_g \rightarrow 0$
and $\{|\tau^\epsilon|^2 \chi_{M\setminus \tilde{K}_\epsilon}\}$ is equi-integrable
to obtain the truncated forms $\tilde{\tau}^\epsilon:=\tau^\epsilon\chi_{M\setminus \tilde{K}_\epsilon}$.
	
Now, let us measure the $L^1$ difference of $\omega^\epsilon$ and $\tilde{\omega}^\epsilon$.
By the Cauchy-Schwarz inequality,
	\begin{equation*}
	\|\tilde{\omega}^\epsilon -\omega^\epsilon\|_{L^1(M;\ptens)}
= \|\omega^\epsilon\|_{L^1(K_\epsilon)} \leq \sqrt{|R_\epsilon|_g} \|\omega^\epsilon\|_{L^2(M;\ptens)} \rightarrow 0,
	\end{equation*}
since $\{\omega^\epsilon\}$ is uniformly bounded in $L^2$ owing to assumption (i).
Moreover, by assumption (ii), $\|d\tilde{\omega}^\epsilon\|_{W^{-1,1}(M;\bigwedge^{q+1}T^*M)} \rightarrow 0$.
Analogously, we find that $\|\tilde{\tau}^\epsilon -\tau^\epsilon\|_{L^1(M;\ptens)}\rightarrow 0$
and $\|\delta\tilde{\tau}^\epsilon\|_{W^{-1,1}(M;\bigwedge^{q-1}T^*M)}\rightarrow 0$. Moreover, we note that
\begin{equation}\label{dunford pettis}
\langle \omega^\epsilon, \tau^\epsilon\rangle - \langle\tilde{\omega}^\epsilon, \tilde{\tau}^\epsilon\rangle \rightarrow 0,
\end{equation}
in view of the equi-integrability assumption (iii) and $|K_\epsilon \cup \tilde{K}_\epsilon|_g \rightarrow 0$.	
By the Dunford-Pettis theorem and assumption (iii),
we know that $\{\langle\omega^\epsilon,\tau^\epsilon\rangle\}$ is weakly pre-compact in $L^1$.
Hence, Eq. \eqref{dunford pettis} implies that
$\{\langle\omega^\epsilon,\tau^\epsilon\rangle\}$
and $\{\langle\tilde{\omega}^\epsilon, \tilde{\tau}^\epsilon\rangle\}$ have the same distributional limits.

Thus, it remains to show that $\langle\tilde{\omega}^\epsilon, \tilde{\tau}^\epsilon\rangle \rightarrow 0$
in the distributional sense.
In fact, by the Lipschitz truncation argument in \cite{CDM},
$d \tilde{\omega}^\epsilon$ and $\delta\tilde{\tau}^\epsilon$ converge to $0$ in $H^{-1}_{\rm loc}$,
after passing to subsequences.
Therefore, we can conclude the proof from the intrinsic div-curl lemma, {\em i.e.},
Theorem \ref{thm_main thm goemetric divcurl}.
\end{proof}

\begin{remark}
In the proof of Theorem {\rm \ref{thm_critical case, div curl lemma}},
the technique of Lipschitz truncation has played an important role,
which reduces the div-curl lemma from the critical case to the usual case,
where the target spaces are reflexive {\rm (}{\it cf.} Theorem {\rm \ref{thm_main thm goemetric divcurl}}{\rm )}.
Such a technique depends explicitly on the geometry of the underlying manifolds.
\end{remark}

We remark that the endpoint case of the intrinsic div-curl lemma,
{\em i.e.}, Theorem \ref{thm_critical case, div curl lemma}, can also be generalized in a similar manner
as for Theorem \ref{thm_general goemetric divcurl}.

\begin{theorem}\label{thm_ generalised critical case, div curl lemma}
Let $(M,g)$ be an $n$-dimensional manifold.
Let $\{\omega^\epsilon\}\subset L^r_{\rm loc}(M;\ptens)$
and $ \{\tau^\epsilon\}\subset L^s_{\rm loc}(M;\ptens)$ be two families of differential $q$--forms,
for $0\leq q\leq n$, $1<r,s<\infty$, and $\frac{1}{r}+\frac{1}{s}=1$. Suppose that
\begin{enumerate}
\item[\rm (i)]
$\omega^\epsilon \rightharpoonup \overline{\omega}$ weakly in $L^r$, and $\tau^\epsilon \rightharpoonup \overline{\tau}$ weakly in $L^s$
as $\e\to 0${\rm ;}
\item[\rm (ii)] There are compact subsets of the corresponding Sobolev spaces, $K_d$ and $K_\delta$, such that
\begin{equation*}
\begin{cases}
\{d\omega^\epsilon\}\subset K_d \Subset W^{-1,1}_{\rm loc}(M;\bigwedge^{q+1}T^*M),\\
\{\delta\tau^\epsilon\}\subset K_\delta \Subset W^{-1,1}_{\rm loc}(M;\bigwedge^{q-1}T^*M);
\end{cases}
\end{equation*}
\item[\rm (iii)] $\{\langle\omega^\epsilon,\tau^\epsilon\rangle\}$ is  equi-integrable.
\end{enumerate}
Then $\langle\omega^\epsilon, \tau^\epsilon\rangle$ converges to $\langle\overline{\omega}, \overline{\tau}\rangle$
in the sense of distributions{\rm :}
\begin{equation*}
\int_M \langle\omega^\epsilon, \tau^\epsilon\rangle\psi\, {\rm d}V_g
\longrightarrow \int_M \langle\overline{\omega}, \overline{\tau}\rangle \psi\, {\rm d}V_g
\qquad \mbox{for any $\psi \in C^\infty_{c}(M)$}.
\end{equation*}
\end{theorem}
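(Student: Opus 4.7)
The plan is to adapt the proof of Theorem \ref{thm_critical case, div curl lemma} to the $L^r$--$L^s$ duality, the endgame being an appeal to Theorem \ref{thm_general goemetric divcurl} in place of Theorem \ref{thm_main thm goemetric divcurl}. As in that earlier proof, we first reduce, by a test-function and partition-of-unity argument, to the situation where $M$ is a closed orientable Riemannian manifold and $\bar{\omega}=\bar{\tau}=0$; the equi-integrability hypothesis (iii) and the uniform $L^r$- and $L^s$-bounds in (i) are preserved under this reduction. Hypothesis (i) then ensures that $\{|\omega^\epsilon|^r\}$ and $\{|\tau^\epsilon|^s\}$ are weakly pre-compact in $L^1(M)$.

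The second step is a truncation via Chacon's biting lemma applied to $\{|\omega^\epsilon|^r\}$ and $\{|\tau^\epsilon|^s\}$ separately. This produces measurable sets $K_\epsilon,\tilde{K}_\epsilon\subset M$ with $|K_\epsilon|_g,|\tilde{K}_\epsilon|_g\to 0$ such that the truncations
\begin{equation*}
\tilde{\omega}^\epsilon:=\omega^\epsilon\chi_{M\setminus K_\epsilon},\qquad \tilde{\tau}^\epsilon:=\tau^\epsilon\chi_{M\setminus\tilde{K}_\epsilon}
\end{equation*}
have $\{|\tilde{\omega}^\epsilon|^r\}$ and $\{|\tilde{\tau}^\epsilon|^s\}$ equi-integrable. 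H\"older's inequality with the conjugate pair $(r,s)$ then yields
\begin{equation*}
\|\omega^\epsilon-\tilde{\omega}^\epsilon\|_{L^1(M;\ptens)}\leq |K_\epsilon|_g^{1/s}\|\omega^\epsilon\|_{L^r(M;\ptens)}\to 0,
\end{equation*}
and analogously $\|\tau^\epsilon-\tilde{\tau}^\epsilon\|_{L^1(M;\ptens)}\to 0$, so that $d\omega^\epsilon-d\tilde{\omega}^\epsilon$ and $\delta\tau^\epsilon-\delta\tilde{\tau}^\epsilon$ tend to zero in $W^{-1,1}_{\rm loc}$. Combining hypothesis (iii) with $|K_\epsilon\cup\tilde{K}_\epsilon|_g\to 0$ and the Dunford--Pettis theorem, we obtain
\begin{equation*}
\langle\omega^\epsilon,\tau^\epsilon\rangle-\langle\tilde{\omega}^\epsilon,\tilde{\tau}^\epsilon\rangle\longrightarrow 0\qquad\text{in }\dis(M),
\end{equation*}
so that the assertion reduces to proving $\langle\tilde{\omega}^\epsilon,\tilde{\tau}^\epsilon\rangle\to 0$ in $\dis(M)$.

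For the last reduction, the Lipschitz-truncation argument from Conti-Dolzmann-M\"uller \cite{CDM}, localized on coordinate charts by a smooth partition of unity, allows us to modify $\tilde{\omega}^\epsilon$ and $\tilde{\tau}^\epsilon$ further on sets of arbitrarily small measure, without affecting distributional limits thanks to equi-integrability, so that the resulting subsequences have $d\tilde{\omega}^\epsilon$ and $\delta\tilde{\tau}^\epsilon$ lying in compact subsets of $W^{-1,r}_{\rm loc}(M;\bigwedge^{q+1}T^*M)$ and $W^{-1,s}_{\rm loc}(M;\bigwedge^{q-1}T^*M)$, respectively. At this stage all the hypotheses of Theorem \ref{thm_general goemetric divcurl} are in force, and its conclusion furnishes the desired convergence $\langle\tilde{\omega}^\epsilon,\tilde{\tau}^\epsilon\rangle\to 0$ in $\dis(M)$, thereby completing the proof.

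The principal obstacle is the Lipschitz-truncation step in the non-Hilbert regime. The truncation technique in \cite{CDM} is formulated for $L^2$-bounded sequences in Euclidean domains; its extension to the $L^r$--$L^s$ duality hinges on the boundedness of the Hardy--Littlewood maximal function on $L^p$ for $1<p<\infty$, which transfers the key estimates without essential modification. The more delicate point is to verify that localization through the partition of unity is compatible with the differential-form structure on $M$, so that compactness of $d\tilde{\omega}^\epsilon$ and $\delta\tilde{\tau}^\epsilon$ in the negative Sobolev spaces survives the patching; this is straightforward in spirit but requires careful bookkeeping, as the commutators of $d$ and $\delta$ with cutoff functions must be controlled uniformly across overlapping charts.
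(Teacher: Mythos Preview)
Your proposal is correct and follows essentially the same approach as the paper, which simply states that the result follows by combining the argument for Theorem~\ref{thm_general goemetric divcurl} with the proof of Theorem~\ref{thm_critical case, div curl lemma}. You have in fact supplied more detail than the paper does, correctly replacing Cauchy--Schwarz by H\"older, $H^{-1}$ by $W^{-1,r}$ and $W^{-1,s}$, and the appeal to Theorem~\ref{thm_main thm goemetric divcurl} by one to Theorem~\ref{thm_general goemetric divcurl}; your caveat about the Lipschitz truncation in the non-Hilbert regime is well taken and your resolution via $L^p$-boundedness of the maximal function is the right one.
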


The proof of Theorem \ref{thm_ generalised critical case, div curl lemma} follows
directly by
combining the argument for Theorem \ref{thm_general goemetric divcurl} in the appendix
with the proof for Theorem \ref{thm_critical case, div curl lemma}.

\smallskip
After introducing the intrinsic div-curl lemmas in the critical case,
we can now establish the global weak rigidity of isometric immersions
for a surface into $\real^3$:

\begin{theorem}
Let $M$ be a $2$-dimensional, simply-connected surface,
and let $g$ be a metric in $H^{1}_{\rm loc}$. If $\{f^\epsilon\}$ is a family of $H^{2}_{\rm loc}$ isometric
immersions of $M$ into $\real^3$ such that the corresponding second fundamental
forms $\{B^\epsilon\}$ are uniformly bounded in $L^2$. Then, after passing to subsequences, $\{f^\epsilon\}$ converges
to $\bar{f}$ weakly in $H^2_{\rm loc}$ which is still an isometric immersion $\bar{f}:(M,g) \hookrightarrow \real^3$.
Moreover, the corresponding second fundamental form $\bar{B}$ is a weak limit of $\{\bep\}$ in $L^2_{\rm loc}$.
\end{theorem}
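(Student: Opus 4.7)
The plan is to work directly with the immersions $\{f^\epsilon\}$ and exploit the two-dimensional compact Sobolev embedding, avoiding the need to route the argument through the GCR system and its critical rigidity. First, I would upgrade the uniform $L^2_{\rm loc}$ bound on the second fundamental forms to a uniform $H^2_{\rm loc}$ bound on the immersions themselves via the Gauss formula
\begin{equation*}
\partial_i\partial_j f^\epsilon \,=\, \Gamma^k_{ij}(g)\,\partial_k f^\epsilon + h^\epsilon_{ij}\,\nu^\epsilon,
\end{equation*}
in which $\Gamma^k_{ij}(g)\in L^2_{\rm loc}$ by $g\in H^1_{\rm loc}$, $|\partial_k f^\epsilon|$ is pointwise controlled by the isometric condition $df^\epsilon\cdot df^\epsilon=g$, and $h^\epsilon_{ij}\in L^2_{\rm loc}$ uniformly by hypothesis. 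Hence $\{f^\epsilon\}$ is uniformly bounded in $H^2_{\rm loc}$, and after passing to a subsequence one may assume $f^\epsilon\rightharpoonup \bar f$ weakly in $H^2_{\rm loc}$ and $B^\epsilon\rightharpoonup \bar B$ weakly in $L^2_{\rm loc}$.

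Second, I would show that $\bar f$ is itself an isometric immersion. The key is the endpoint Rellich--Kondrachov embedding $H^1_{\rm loc}\Subset L^q_{\rm loc}$ for every $q<\infty$, available precisely because $n=2$, which delivers strong convergence $df^\epsilon\to d\bar f$ in $L^q_{\rm loc}$ for every finite $q$. Consequently $df^\epsilon\cdot df^\epsilon\to d\bar f\cdot d\bar f$ strongly in $L^{q/2}_{\rm loc}$; but the left-hand side equals $g$ for each $\epsilon$, whence $d\bar f\cdot d\bar f=g$ almost everywhere, and injectivity of $d\bar f$ is automatic from the positive-definiteness of $g$.

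Third, I would identify $\bar B$ with the second fundamental form of $\bar f$, exploiting crucially the codimension-one hypothesis. The unit normal
\begin{equation*}
\nu^\epsilon \,=\, \frac{\partial_1 f^\epsilon\times\partial_2 f^\epsilon}{|\partial_1 f^\epsilon\times\partial_2 f^\epsilon|}
\end{equation*}
is a smooth algebraic function of $df^\epsilon$ on the locus where the denominator is strictly positive, which is the entire manifold by the isometric condition. Strong $L^q_{\rm loc}$ convergence of $df^\epsilon$ then propagates, via the dominated convergence theorem applied after passing to an a.e.\ convergent subsequence, to $\nu^\epsilon\to\bar\nu$ in every $L^q_{\rm loc}$. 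Writing the scalar second fundamental form as $h^\epsilon_{ij}=\langle\partial_i\partial_j f^\epsilon,\nu^\epsilon\rangle$, one pairs the weak $L^2_{\rm loc}$ convergence of $\partial_i\partial_j f^\epsilon$ with the strong $L^2_{\rm loc}$ convergence of $\nu^\epsilon\psi$ for any test function $\psi\in C^\infty_c$ to conclude that $h^\epsilon_{ij}\to\langle\partial_i\partial_j\bar f,\bar\nu\rangle=\bar h_{ij}$ in the distributional sense. Combined with the weak $L^2_{\rm loc}$ convergence $B^\epsilon\rightharpoonup \bar B$ already extracted, this identifies $\bar B$ as the second fundamental form of $\bar f$, completing the proof.

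The main obstacle, and the reason this critical case requires special treatment, is the joint product convergence in step three: the second fundamental form depends simultaneously on $d^2 f$ (only weakly convergent) and on the unit normal $\nu$ (a nonlinear function of $df$). What rescues the argument is precisely the two-dimensional endpoint embedding $H^1\Subset L^q$ for every finite $q$, together with the codimension-one structure that renders $\nu$ an explicit algebraic function of $df$; both ingredients are essential, and either failure would block this direct route. Alternatively, one may route the proof through Theorem~\ref{thm_critical case, div curl lemma} applied to the second formulation of the GCR equations developed in \S 4, in which case hypothesis (iii) on equi-integrability would be supplied by the Gauss equation itself, which pins down the antisymmetrised quadratic pairing of $h^\epsilon$ as the Gauss curvature of the $\epsilon$-independent metric $g$.
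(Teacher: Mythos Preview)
Your approach is correct and genuinely different from the paper's. The paper routes the argument through the GCR system: it observes that in codimension one the Ricci equation is vacuous and the Codazzi equation becomes linear (since $\nabla^\perp$ is trivial), then handles the Gauss equation by invoking the endpoint div-curl lemma (Theorem~\ref{thm_critical case, div curl lemma}), with the required equi-integrability supplied by the fact that the quadratic form in $B^\epsilon$ equals the fixed $L^1_{\rm loc}$ function $R(X,Y,Z,W)$. You instead bypass the GCR machinery entirely and argue directly at the level of the maps $f^\epsilon$, exploiting the two-dimensional compact embedding $H^1_{\rm loc}\Subset L^q_{\rm loc}$ for every finite $q$ to upgrade weak $H^2_{\rm loc}$ convergence of $f^\epsilon$ to strong $L^q_{\rm loc}$ convergence of $df^\epsilon$ and hence of $\nu^\epsilon$; the identification of $\bar B$ with the second fundamental form of $\bar f$ then follows from a clean weak--strong pairing.

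Each route has its merits. Yours is more elementary and in one respect more complete: the paper's proof establishes only that the weak $L^2_{\rm loc}$ limit $\bar B$ of $\{B^\epsilon\}$ solves the GCR system, and leaves implicit the passage back to the statement that the weak $H^2_{\rm loc}$ limit $\bar f$ is itself an isometric immersion whose second fundamental form is $\bar B$ --- a step which, via Theorem~\ref{thm_equivalence of 3 formulations}, strictly requires $p>n$ and is therefore not available here. Your direct argument supplies exactly this missing link. On the other hand, the paper's approach illustrates that the critical div-curl lemma is sharp enough to treat the quadratic nonlinearity in the Gauss equation, which is of independent interest and fits the broader intrinsic framework of the paper. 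A minor caveat in your Step~1: to place $\Gamma^k_{ij}\,\partial_k f^\epsilon$ uniformly in $L^2_{\rm loc}$ you should note that its pointwise modulus equals $|\Gamma|\sqrt{g_{kk}}$, which is $\epsilon$-independent, so the required bound is a hypothesis on $g$ alone (and one must normalise $f^\epsilon$ by a translation to control the zeroth-order term). With that understood, your argument goes through.
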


\begin{proof} We divide the proof into three steps.

\smallskip
{\bf 1.} Since the codimension of the immersions $\{f^\epsilon\}$ is $1$, the normal affine connections are trivial. This is because, in the Ricci equation:
\begin{equation*}
\langle[S_\eta, S_\xi]X,Y\rangle = R^\perp (X,Y,\eta,\xi),
\end{equation*}
we can only take the normal vector fields $\eta$ and $\xi$
to be linearly dependent,
as the fibre for the normal bundle is simply $\real$.
Then both sides are equal to zero, by the definition of $R^\perp$ and the Lie bracket.
Now we look at the Codazzi equation.
In the second formulation for  Theorem \ref{thm_main theorem weak continuity},
we have obtained Eq. \eqref{eqn_Codazzi Eqn in second computation}, which is
\begin{equation*}
0=d(\ob_{Z,\eta})(X,Y) + \sum_\beta \langle \vnab_{\eta,\beta}(X,Y),\ob_{Z,\beta}\rangle + E(B).
\end{equation*}
However, since $\na^\perp$ is trivial, the equation is reduced to
\begin{equation}
d(\ob_{Z,\eta})(X,Y) = -E(B),
\end{equation}
where $E(B)$ is linear in $B$.
We notice that the Codazzi equation in the critical case becomes linear, as the quadratic nonlinearities are absent, so that
we can pass to the weak limits.

\smallskip
{\bf 2.} It remains to prove the weak rigidity of the Gauss equation:
\begin{equation*}
\langle B^\epsilon(Y,W), B^\epsilon(X,Z) \rangle -\langle B^\epsilon(X,W), B^\epsilon(Y,Z)\rangle = R(X,Y,Z,W).
\end{equation*}

By assumption, $\{\bep\}$ is uniformly bounded in $L^2_{\rm loc}$,
so that $R \in L^1_{\rm loc}(M; \bigotimes^4 T^*M)$ is a fixed function,
by the Cauchy-Schwarz inequality.
It follows that
\begin{align*}
&\int_M \Big|\langle B^\epsilon(Y,W), B^\epsilon(X,Z) \rangle
- \langle B^\epsilon(X,W), B^\epsilon(Y,Z)\rangle\Big|\chi_A\, {\rm d}V_g \\
&\leq \int_M \chi_A|R(X,Y,Z,W)|\, {\rm d}V_g \rightarrow 0 \qquad \text{ as } |A|_g \rightarrow 0,
\end{align*}
{\it i.e.},
the set of quadratic nonlinearities
$\{\langle B^\epsilon(Y,W), B^\epsilon(X,Z) \rangle
- \langle B^\epsilon(X,W), B^\epsilon(Y,Z)\rangle\}$
is equi-integrable. Here it is crucial that $R$ is a locally integrable function, not just a Radon measure.

\smallskip
{\bf 3.} Now, we invoke again the arguments in \S \ref{section_GCR} to analyze the div-curl structure of the Gauss equation.
First, recall our definition for the tensor field $\vb_{Z,\eta}: \vf \times \vf \mapsto \vf$
and the $1$--form $\ob_{Z,\eta} \in \Omega^1(M)=\Gamma(T^*M)$. We set
\begin{equation*}
\begin{cases}
\vb_{Z,\eta}(X,Y):= B(X,Z,\eta)Y-B(Y,Z,\eta)X, \\[2mm]
\ob_{Z,\eta}:= -B(\bullet, Z, \eta).
\end{cases}
\end{equation*}
Then the first formulation in \S \ref{section_GCR} leads to
\begin{equation}\label{xx}
\begin{cases}
{\rm div}\big(\vb_{Z,\eta}(X,Y)\big)= YB(X,Z,\eta) - X B(Y,Z,\eta),\\[1mm]
d \big(\ob_{Z,\eta}\big)(X,Y) = YB(X,Z,\eta) - XB(Y,Z,\eta) + B([X,Y],Z,\eta),
\end{cases}
\end{equation}
and the second formulation  in \S 4 enables us to recast the Gauss equation into the following form:
\begin{equation*}
\sum_\eta \langle\vb_{Z,\eta}(X,Y), \ob_{W,\eta} \rangle =-R(X,Y,Z,W).
\end{equation*}

Since $\{\bep\}$ is uniformly bounded in $L^2_{\rm loc}$,
we can deduce from the equations in \eqref{xx} that $\{\delta V^{(B^\epsilon)}\}$ and $\{d\Omega^{(B^\epsilon)}\}$
are relatively compact in $W^{-1,1}_{\rm loc}$.
Now we can employ the critical case of our intrinsic div-curl lemma, Theorem \ref{thm_ generalised critical case, div curl lemma},
to conclude that
\begin{align}
\langle B^\epsilon(X,W), B^\epsilon(Y,Z)- \langle B^\epsilon(Y,W), B^\epsilon(X,Z) \rangle
\longrightarrow \langle \overline{B}(X,W), \overline{B}(Y,Z) - \langle \overline{B}(Y,W), \overline{B}(X,Z) \rangle
\end{align}
in the distributional sense,
where $\overline{B}$ is the $L^2_{\rm loc}$ weak limit of $\{B^\epsilon\}$ up to subsequences.
From here, we conclude the weak rigidity of the Gauss equation.
Therefore, the proof is complete.
\end{proof}

\section{Further Results and Remarks}\label{section_remark}
	
In this section, we first provide a proof of the
weak rigidity of the Cartan formalism.
Then we extend the weak rigidity theory to allow
manifolds $(M, g^\epsilon)$ with metrics $\{g^\epsilon\}$ converging to $g$ in $W^{1,p}_{\rm loc}, p>n$,
instead of the fixed manifold $(M, g)$ with fixed metric $g$.

\subsection{Weak Rigidity of the GCR Equations and Isometric Immersions  Revisited}

We now provide a proof of the weak rigidity of the Cartan formalism,
which leads to
the weak rigidity of the GCR equations and isometric immersions,
in view of the equivalence between the Cartan formalism, the GCR equations,
and isometric immersions
established in \S \ref{section_equivalence}.
Therefore, we also provide an alternative proof of
Theorem \ref{thm_main theorem weak continuity}
and Corollary \ref{cor_Weak continuity of isometric immersions}.	
The arguments are summarized as follows:
	
\begin{proof}[Alternative Proof of Theorem {\rm \ref{thm_main theorem weak continuity}}
and Corollary {\rm \ref{cor_Weak continuity of isometric immersions}}]

Observe that all the nonlinear terms in the Cartan formalism are contained
in the second structural equation, {\it i.e.},
\begin{equation}\label{eee}
dW=-W\wedge W.
\end{equation}
Thus it suffices to establish the weak rigidity of \eqref{eee}.

Indeed, consider the family of connection forms $\{W^\epsilon\}$ associated
with $\{(B^\epsilon,\nabla^{\perp,\epsilon})\}$ in the Cartan formalism for isometric
immersions; see \S 5.
As $\{(B^\epsilon,\nabla^{\perp,\epsilon})\}$ is uniformly bounded in $L^p_{\text{loc}}$,
the same holds for $\{W^\epsilon\}$.
Also, recall that $W^\epsilon$ is a Lie algebra-valued $1$--form, {\it i.e.}, it is an element
of $\Omega^1(\mathfrak{so}(n+k)) \cong \mathfrak{so}(n+k) \bigotimes \Omega^1(M)$.
Throughout this proof, the Hodge star $\ast$ is always understood as taken with respect to
the $\Omega^q(M)$ factor of $\mathfrak{so}(n+k) \bigotimes \Omega^q(M)$.
We see from the definition of Sobolev spaces of tensor fields in \S 2
that $\ast$ is an isometry between $W^{k,p}(M; \bigwedge^q T^*M)$
and $W^{k,p}(M; \bigwedge^{n-q} T^*M)$ for any $k, p$, and $q$.

Next, we take an arbitrary test differential form $\eta \in C^\infty_c(M; \bigwedge^{n-2} T^*M)$,
which is independent of $\epsilon$ and has a trivial $\mathfrak{so}(n+k)$--component.
Eq. \eqref{eee} can then be recast as
\begin{equation}\label{equation: eta in the alternate proof}
dW^\epsilon \wedge \eta = - W^\epsilon \wedge W^\epsilon \wedge \eta.
\end{equation}

Define $V^\epsilon := \ast(W^\epsilon \wedge \eta) \in \Omega^{1}(\mathfrak{so}(n+k))$.
Using
$\delta = \pm \ast d \ast$, $\ast \ast = \pm {\rm id}$, and
$\langle \alpha,\beta \rangle = \alpha \wedge \ast \beta$ for differential
forms $\alpha$ and $\beta$
of the same order ({\it cf.} \S 2), we obtain the next two equalities:
\begin{eqnarray}
&&\langle dW^\epsilon, \ast \eta \rangle = \pm \langle W^\epsilon,  V^\epsilon\rangle,
\label{equation for W, alternative proof}\\
&&\ast \delta V^\epsilon = \pm (\ast W^\epsilon) \wedge V^\epsilon \pm W^\epsilon \wedge d\eta.
\label{equation for V, alternative proof}
\end{eqnarray}
For our purpose, we bound only for relevant geometric quantities in some $W^{k,p}_{\rm loc}$
so that
there is no need to keep track of the specific signs: Instead, we just write $\pm$ in suitable places.

With these,
we now analyze the weak rigidity of Eq. \eqref{eee}.
As $\eta \in C^\infty_c$, and the Hodge star $\ast$ is isometric, the Cauchy-Schwarz inequality
gives that $\{\langle W^\epsilon, V^\epsilon \rangle\}$ and $\{(\ast W^\epsilon) \wedge V^\epsilon\}$
are uniformly bounded in $L^{p/2}_{{\rm loc}}$ for $p>2$.
Also, $\{W^\epsilon \wedge d\eta\}$ is uniformly bounded in $L^p_{\rm loc}$ for $p>2$.
Thus, in view of the Sobolev embeddings and
Eqs. \eqref{equation for W, alternative proof}--\eqref{equation for V, alternative proof},
we find that $\{dW^\epsilon\}$ and $\{\delta V^\epsilon\}$ are pre-compact in $W^{-1, r}_{{\rm loc}}$, with $1<r<2$.
On the other hand, $\{dW^\epsilon\}$ and $\{\delta V^\epsilon\}$ are uniformly bounded in $W^{-1, p}_{\rm loc}$ for $p>2$.
Thus, by interpolation, the curl ({\it i.e.}, $d$) of $\{W^\epsilon\}$ and the divergence ({\it i.e.}, $\delta$ modulo the sign)
of $\{V^\epsilon\}$ are pre-compact in $H^{-1}_{\rm loc}$.

Then, employing  the geometric div-curl lemma ({\it i.e.}, Theorem \ref{thm_main thm goemetric divcurl}),
$\langle W^\epsilon, V^\epsilon\rangle = \ast(W^\epsilon \wedge \ast V^\epsilon)$ converges
to $\langle W, V\rangle$ in the distributional sense, where $W$ and $V$ are the weak $L^p_{\rm loc}$ limits
of $\{W^\epsilon\}$ and $\{V^\epsilon\}$ respectively.
Moreover, $V = \ast (W \wedge \eta)$.
Substituting them into Eq. \eqref{equation: eta in the alternate proof}, we have
\begin{equation}
dW \wedge \eta = - W\wedge W \wedge \eta.
\end{equation}
Then, by the arbitrariness of $\eta \in C^\infty_c(M; \bigwedge^{n-2} T^*M)$,
we conclude the weak rigidity of the second structural equation \eqref{eee}.

Therefore, in view of the equivalence of the Cartan formalism, the GCR equations,
and the isometric immersions of Riemannian manifolds ({\it cf.} Theorem \ref{thm_equivalence of 3 formulations},
Theorem \ref{thm_main theorem weak continuity},
and Corollary \ref{cor_Weak continuity of isometric immersions}), the proof is now  complete.
\end{proof}

We remark that the above proof lies in the same spirit as in Chen-Slemrod-Wang \cite{CSW10} for the GCR equations.
Related arguments are also present in \cite{Ball,CLMS,Eva90,Evans-Muller,Muller,Mur78,Murat2,Tar79,Tartar2}
and the references cited therein.

\subsection{Weak Rigidity of Isometric Immersions with Different Metrics}

We now develop an extension of the weak rigidity theory of the GCR equations and isometric immersions established above.
In the earlier sections, we have analyze the isometric immersions of a {\em fixed} Riemannian manifold.
In particular, despite the change of second fundamental forms and normal connections as we shift between
distinctive immersions,  metric $g\in W^{1,p}_{\text{loc}}$ is always fixed.
In what follows, we generalize the weak rigidity results ({\it cf.} Theorem \ref{thm_main theorem weak continuity}
 and Corollary \ref{cor_Weak continuity of isometric immersions}) to the Riemannian manifolds with unfixed
 metrics $\{g^\epsilon\}$.

More precisely, we establish the following theorem.
\begin{theorem}\label{theorem_weak continuity with changing metrics}
Let $(M, g^\epsilon)$ be a sequence of $n$-dimensional simply-connected Riemannian manifolds
with metrics $\{g^\epsilon\}\subset W^{1,p}_{\text{loc}}(M;\text{Sym}^2T^*M)$ converging
strongly in $W^{1,p}_{\text{loc}}$ for $p>n$.
Suppose that there exists a family of corresponding isometric immersions
of $(M,g^\epsilon)$ converging weakly in $W^{2,p}_{\text{loc}}(M;\R^{n+k})$,
denoted by $\{f^\epsilon\}$, with second fundamental forms $\{B^\epsilon\}$ and normal connections $\{\na^{\perp,\epsilon}\}$.
Then, after passing to subsequences, $\{f^\epsilon\}$ converges weakly to an isometric immersion $\bar{f}$ of $(M,g)$ in $W^{2,p}_{\rm loc}$,
where $g$ is the $W^{1,p}_{\text{loc}}$ limit of $\{g^\epsilon\}$.
Moreover, the second fundamental form $\overline{B}$ and normal connection $\overline{\na^\perp}$ of immersion $\bar{f}$
coincide with the corresponding subsequential weak  $L^p_{\text{loc}}$--limits of $\{B^\epsilon\}$ and $\{\na^{\perp,\epsilon}\}$.
\end{theorem}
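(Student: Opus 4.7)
The plan is to reduce weak rigidity with varying metrics to two ingredients: strong uniform convergence of the first-order data $\{df^\epsilon\}$, and weak $L^p_{\text{loc}}$-convergence of the second-order data $\{\bar\nabla\,df^\epsilon\}$. The nonlinearities that define $B^\epsilon$ and $\nabla^{\perp,\epsilon}$ will then be handled by the elementary ``strong times weak'' principle; the Cartan/GCR compensated compactness machinery of \S4 and \S7.1 is available as a fallback should such a clean decomposition fail.

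First, since $p>n$, the Sobolev embedding $W^{2,p}_{\text{loc}}\hookrightarrow C^{1}_{\text{loc}}$ is compact, so, along a subsequence, $df^\epsilon\to d\bar f$ strongly in $C^{0}_{\text{loc}}$. Writing the isometric immersion condition intrinsically as $(f^\epsilon)^\ast g_0=g^\epsilon$, where $g_0$ is the Euclidean metric on $\rnk$, the left-hand side converges uniformly to $\bar f^\ast g_0$ while the right-hand side converges strongly in $W^{1,p}_{\text{loc}}$ to $g$; hence $\bar f^\ast g_0=g$. Since $g$ is a Riemannian metric, this forces $d\bar f$ to be pointwise injective a.e., so $\bar f:(M,g)\hookrightarrow(\rnk, g_0)$ is an isometric immersion.

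Next, I would identify the weak limits $\overline B$ and $\overline{\nabla^\perp}$. For each $\epsilon$ and each $P\in M$, let $\pi^{\perp,\epsilon}_P\in\mathrm{End}(\rnk)$ denote the orthogonal projection onto $\bigl(df^\epsilon_P(T_PM)\bigr)^{\perp}$ with respect to $g_0$; this projection depends smoothly on $df^\epsilon$ alone where the latter has full rank, so $\pi^{\perp,\epsilon}\to\pi^\perp$ in $C^0_{\text{loc}}$, where $\pi^\perp$ is the corresponding projection for $\bar f$. Using the defining formulas
\begin{equation*}
B^\epsilon(X,Y)=\pi^{\perp,\epsilon}\bigl(\bar\nabla_X df^\epsilon(Y)\bigr),\qquad \nabla^{\perp,\epsilon}_X\xi=\pi^{\perp,\epsilon}\bigl(\bar\nabla_X\xi\bigr),
\end{equation*}
together with the weak convergence $\bar\nabla\,df^\epsilon\rightharpoonup\bar\nabla\,d\bar f$ in $L^p_{\text{loc}}$ inherited from $f^\epsilon\rightharpoonup\bar f$ in $W^{2,p}_{\text{loc}}$, the product of uniformly convergent projections with weakly convergent derivatives converges weakly in $L^p_{\text{loc}}$ to the product of the limits. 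Uniqueness of weak limits then identifies $\overline B$ and $\overline{\nabla^\perp}$ with the second fundamental form and normal connection of $\bar f$, respectively.

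The main obstacle will be that the normal bundles $T(f^\epsilon M)^\perp$ genuinely vary with $\epsilon$, so $B^\epsilon$ and $\nabla^{\perp,\epsilon}$ a priori take values in $\epsilon$-dependent bundles and cannot be compared directly. The cleanest remedy is to embed all such data into the fixed ambient bundle $M\times\rnk$ via the projections $\pi^{\perp,\epsilon}$ above, after which the strong-times-weak argument applies in a fixed function space. A more conceptual alternative is to invoke Theorem \ref{thm_equivalence of 3 formulations} and work in the Cartan formalism: the connection $1$-forms $W^\epsilon\in\Omega^1(M;\mathfrak{so}(n+k))$ associated with $(f^\epsilon,g^\epsilon)$ live on a \emph{fixed} bundle, the second structural equation $dW^\epsilon=-W^\epsilon\wedge W^\epsilon$ can be passed to the limit by the div-curl argument of \S7.1 applied locally, and the strong $W^{1,p}$-convergence $g^\epsilon\to g$ ensures that $g^\epsilon$-orthonormal adapted co-frames converge, modulo an $O(n+k)$-valued gauge, to a $g$-orthonormal co-frame; this closes the argument without ever changing function spaces and recovers the full conclusion via the equivalence theorem.
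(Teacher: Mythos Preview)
Your approach is correct and genuinely more elementary than the paper's. The paper does not exploit the strong-times-weak structure at the level of the immersion; instead it reruns the full compensated compactness argument of \S\ref{section_GCR} on the GCR equations with varying metrics. Concretely, the paper reintroduces the $V$-- and $\Omega$--tensors of Lemmas~\ref{lemma:4.1}--\ref{lemma:4.2}, notes that the Levi-Civita connections $\nabla^\epsilon$ converge strongly in $L^p_{\text{loc}}$ because $g^\epsilon\to g$ strongly in $W^{1,p}_{\text{loc}}$, verifies $H^{-1}_{\text{loc}}$ pre-compactness of the relevant divergences and curls, applies the intrinsic div-curl lemma (Theorem~\ref{thm_main thm goemetric divcurl}) to pass the quadratic terms to the limit, and finally invokes Corollary~\ref{cor_Weak continuity of isometric immersions} to identify $\bar f$ as an isometric immersion of $(M,g)$ with data $(\overline B,\overline{\nabla^\perp})$.

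Your route leverages a fact the paper's argument ignores: since $p>n$, weak $W^{2,p}_{\text{loc}}$ convergence of $f^\epsilon$ already yields strong $C^1_{\text{loc}}$ convergence, so the normal projections $\pi^{\perp,\epsilon}$ converge uniformly and every nonlinearity in the definitions of $B^\epsilon$ and $\nabla^{\perp,\epsilon}$ becomes a product of a uniformly convergent factor with a weakly $L^p$-convergent one, all living in the fixed ambient bundle $M\times\rnk$. This bypasses the div-curl lemma entirely. The trade-off is that the paper's argument is agnostic to whether actual immersions are given and would survive with only approximate GCR solutions or in the range $2<p\le n$ where no $C^1$ compactness is available; your argument, by contrast, genuinely needs the immersions $f^\epsilon$ and the hypothesis $p>n$, both of which are present here.
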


\begin{proof}
In this proof, we write the inner product of $X$ and $Y$ induced by $g^\epsilon$ by $g^\epsilon(X,Y)$,
and the previous notation $\langle\cdot,\cdot\rangle$ is reserved for the paring of $(TM, T^*M)$.
Moreover, $g_0$ always denotes the Euclidean inner product in $\R^{n+k}$.
We also write $\na^\epsilon$ and $R^\epsilon$ for the Levi-Civita connection and Riemann curvature
tensor, respectively, associated with $g^\epsilon$.
We divide the proof into four steps.

\smallskip
{\bf 1.} First of all, we again rewrite the GCR equations as in the first and second formulation
in \S 4 ({\it cf.} Lemmas \ref{lemma:4.1}--\ref{lemma:4.2}).
Indeed, defining the vector fields $V$ and $1$--forms $\Omega$ as follows:
\begin{equation*}
\begin{array}{lll}
&V^{(B^\epsilon)}_{Z,\eta}(X,Y)= g_0(B^\epsilon(X,Z),\eta) Y - g_0(B^\epsilon(Y,Z),\eta)X,
     &\qquad\Omega^{(B^\epsilon)}_{Z,\eta} = -g_0(B^\epsilon(\bullet,Z),\eta), \\
&V^{(\na^{\perp, \epsilon})}_{\xi,\eta} (X,Y) = g_0(\na^{\perp, \epsilon}_Y\xi,\eta)X-g_0(\na^{\perp, \epsilon}_X\xi, \eta)Y,
  &\qquad\Omega^{(\na^{\perp,\epsilon})}_{\xi,\eta} = g_0 (\na^{\perp, \epsilon}_\bullet\xi,\eta),
\end{array}
\end{equation*}
we can establish the equivalence of the GCR equations with the following three expressions:
\begin{equation}\label{eqn g epsilon 1}
\sum_\eta \langle V^{(B^\epsilon)}_{W,\eta}(X,Y),\Omega^{(B^\epsilon)}_{Z,\eta} \rangle = -R^\epsilon(X,Y,W,Z),
\end{equation}
\begin{equation}\label{eqn g epsilon 2}
d\Omega^{(B^\epsilon)}_{Z,\eta}(X,Y) + \sum_\beta \langle V^{(\na^{\perp,\epsilon})}_{\eta,\beta}(X,Y),\Omega^{(B^\epsilon)}_{Z,\beta}\rangle + g_0\big(B^\epsilon(Y,\na^\epsilon_X Z),\eta
\big) - g_0\big(B^\epsilon(X,\na^\epsilon_YZ),\eta\big) =0,
\end{equation}
\begin{equation}\label{eqn g epsilon 3}
d\Omega^{(\na^{\perp,\epsilon})}_{\xi,\eta}(X,Y)
+ \sum_\beta \langle V^{(\na^{\perp,\epsilon})}_{\eta,\beta}(X,Y), \Omega^{(\na^{\perp,\epsilon})}_{\xi,\beta} \rangle
= \sum_Z\langle V^{(B^\epsilon)}_{Z,\xi}(X,Y), \Omega^{(B^\epsilon)}_{Z,\eta} \rangle.
\end{equation}
The derivations of these identities are the same as in Lemma \ref{lemma:4.2}.

\smallskip
{\bf 2.}
We now analyze the mode of  convergence for each term.
Since $\{f^\epsilon\}$ is weakly convergent in the $W^{2,p}_{\text{loc}}$ norm
(hence $\{B^\epsilon\}$ and $\{\na^{\perp,\epsilon}\}$ are uniformly bounded in $L^p_{\text{loc}}$),
we know that $\big\{V^{(B^\epsilon)}_{Z,\eta}(X,Y), V^{(\na^{\perp,\epsilon})}_{\xi,\eta}(X,Y)\big\} \subset L^p_{\text{loc}}(TM)$
and $\big\{\Omega^{(B^\epsilon)}_{Z,\eta}, \Omega^{(\na^{\perp,\epsilon})}_{\xi,\eta}\big\} \subset L^p_{\text{loc}}(T^*M)$
are also uniformly bounded.
Moreover, from the assumption that $\{g^\epsilon\}$ is strongly convergent in $W^{1,p}_{\text{loc}}$,
the Levi-Civita connections $\na^\epsilon: \Gamma(TM) \times \Gamma(TM) \rightarrow \Gamma(TM)$
are strongly convergent in $L^p_{\text{loc}}$. This can be seen either by the local expression
\begin{equation*}
(\Gamma^\epsilon)^i_{jk}:={g^\epsilon}(\na_{\partial_i}\partial_j,\partial_k)
= \frac{1}{2}(g^\epsilon)^{il}\big(\partial_j g^\epsilon_{kl} +\partial_k g^\epsilon_{jl}-\partial_l g^\epsilon_{jk} \big),
\end{equation*}
or from the {\em Koszul formula}:
\begin{align}
2g^\epsilon(\na^\epsilon_XY,Z) =&\; Xg^\epsilon(Y,Z) + Yg^\epsilon(X,Z) - Zg^\epsilon(X,Y)\nonumber\\
&\; + g^\epsilon([X,Y],Z) - g^\epsilon([X,Z],Y) - g^\epsilon([Y,Z],X),
\end{align}
together with the compact embedding $W^{1,p}_{\text{loc}}(\R^n) \hookrightarrow L^\infty_{\text{loc}}(\R^n)$ for $p>n$.
As a consequence, since
$g_0\big(B^\epsilon(Y,\na^\epsilon_X Z),\eta \big) - g_0\big(B^\epsilon(X,\na^\epsilon_YZ),\eta\big)$
is a product of strongly $L^p_{\text{loc}}$ convergent and weakly $L^p_{\text{loc}}$ convergent (equivalently, $L^p_{\text{loc}}$ bounded)
tensors for $p>n$, we can pass to the limit,
{\it i.e.}, it converges to $g_0\big(\overline{B}(Y,\na_XZ),\eta\big)-g_0\big(\overline{B}(X,\na_YZ),\eta \big)$ modulo subsequences.

\smallskip
{\bf 3.}
Now let us invoke the analogous equations to the first formulation (Lemma \ref{lemma:4.1}):
\begin{eqnarray*}
&&{\rm div}\big(V^{(B^\epsilon)}_{Z,\eta}(X,Y)\big)
  = Yg_0(B^\epsilon(X,Z),\eta) - Xg_0(B^\epsilon(Y,Z),\eta), \\
&&{\rm div}\big(V^{(\na^{\perp,\epsilon})}_{\xi,\eta}(X,Y)\big)
 =-Yg_0 (\na^{\perp,\epsilon}_X\xi,\eta) +Xg_0 (\na^{\perp,\epsilon}_Y\xi,\eta).
\end{eqnarray*}
Again, by the uniform  bounds for $\{B^\epsilon\}$ and $\{\na^{\perp,\epsilon}\}$ in $L^p_{\text{loc}}$,
we know that $\big\{{\rm div}\big(V^{(B^\epsilon)}_{Z,\eta}(X,Y)\big)\big\}$
and $\big\{{\rm div}\big(V^{(\na^{\perp,\epsilon})}_{\xi,\eta}(X,Y)\big)\big\}$ are uniformly
bounded in $W^{-1,p}_{\text{loc}}$, where $p > n \geq 2$.
On the other hand, since
\begin{eqnarray*}
&& d\Omega^{(B^\epsilon)}_{Z,\eta}(X,Y)
 = Y g_0(B^\epsilon(X,Z),\eta)- Xg_0(B^\epsilon(Y,Z),\eta) + g_0( B^\epsilon([X,Y],Z),\eta),\\
&& d\Omega^{(\na^{\perp,\epsilon})}_{\xi,\eta}(X,Y) =  -Yg_0 (\na^{\perp,\epsilon}_X\xi,\eta)
+Xg_0 (\na^{\perp,\epsilon}_Y\xi,\eta) - g_0(\na^{\perp,\epsilon}_{[X,Y]}\xi,\eta),
\end{eqnarray*}
by substituting the left-hand sides into
\eqref{eqn g epsilon 1}--\eqref{eqn g epsilon 3}
and applying the H\"{o}lder inequality to the suitable terms,
we find that $\big\{{\rm div}\big(V^{(B^\epsilon)}_{Z,\eta}(X,Y)\big)\big\}$,
$\big\{{\rm div}\big(V^{(\na^{\perp,\epsilon})}_{\xi,\eta}(X,Y)\big)\big\}$,
$\big\{d\Omega^{(B^\epsilon)}_{Z,\eta}\big\}$, and $\big\{d\Omega^{(\na^{\perp,\epsilon})}_{\xi,\eta}\big\}$
are uniformly bounded in $L^{p/2}_{\text{loc}}$ so that, by interpolation,
they are pre-compact in $H^{-1}_{\text{loc}}$.

\smallskip
{\bf 4.} The arguments in Step 3 enable us to apply the geometric div-curl lemma (as in Theorem \ref{thm_main thm goemetric divcurl}),
which leads to the following convergence in the distributional sense:
\begin{eqnarray*}
&&\langle V^{(\na^{\perp,\epsilon})}_{\eta,\beta}(X,Y),\Omega^{(B^\epsilon)}_{Z,\beta}\rangle \longrightarrow \langle  V^{(\overline{\na^\perp})}_{\eta,\beta}(X,Y), \Omega^{(\overline{B})}_{Z,\beta}\rangle,\\
&&\langle V^{(B^\epsilon)}_{W,\eta}(X,Y),\Omega^{(B^\epsilon)}_{Z,\eta} \rangle  \longrightarrow \langle V^{(\overline{B})}_{W,\eta}(X,Y), \Omega^{(\overline{B})}_{Z,\eta} \rangle,\\
&&\langle V^{(\na^{\perp,\epsilon})}_{\eta,\beta}(X,Y), \Omega^{(\na^{\perp,\epsilon})}_{\xi,\beta} \rangle\longrightarrow  \langle V^{(\overline{\na^\perp})}_{\eta,\beta}(X,Y), \Omega^{(\overline{\na^\perp})}_{\xi,\beta}\rangle,\\
&&\langle V^{(B^\epsilon)}_{Z,\xi}(X,Y), \Omega^{(B^\epsilon)}_{Z,\eta} \rangle\longrightarrow \langle V^{(\overline{B})}_{Z,\xi}(X,Y), \Omega^{(\overline{B})}_{Z,\eta}\rangle,
\end{eqnarray*}
where $(\overline{B}, \overline{\na^\perp})$ are the subsequential weak limits of $\{(B^\epsilon, \na^{\perp,\epsilon})\}$.
Therefore, we conclude that $(\overline{B}, \overline{\na^\perp})$ satisfy the GCR equations in the distributional sense
with respect to metric $g$, {\it i.e.}, the subsequential strong $W^{1,p}_{\text{loc}}$ limit of $\{g^\epsilon\}$.
Furthermore, as $f^\epsilon$ converges to $\bar{f}$ weakly in $W^{2,p}_{\text{loc}}(M, \rnk)$,
Corollary \ref{cor_Weak continuity of isometric immersions} implies that $\bar{f}$ is an isometric immersion with respect
to the limiting metric $g$, and its second fundamental form and normal connection coincide
with $\overline{B}$ and $\overline{\na^\perp}$, respectively. Therefore, the proof is complete.
\end{proof}

We also refer the reader to some recent results on
the compactness of $W^{2,p}$ immersions
of $n$-dimensional manifolds for $p>n$ with appropriate {\it gauges}
for the case $n=2$ in Langer \cite{Langer}
and the higher dimensional case in Breuning \cite{Bre15}.

\appendix
\section{Proof of Theorem \ref{thm_general goemetric divcurl}}

In this appendix, we give a proof for the intrinsic
div-curl lemma, {\it i.e.}, Theorem \ref{thm_general goemetric divcurl},
on Riemannian manifolds.
Some arguments for the proof are motivated from the work by Robbin-Rogers-Temple \cite{RRT87},
in which a similar result has been established for the {\em local differential forms
on the flat Euclidean spaces $\real^n$}; also see \S 5 in Kozono-Yanagisawa \cite{KY13}.

\begin{proof}[Proof of Theorem {\rm \ref{thm_general goemetric divcurl}}]
We divide the proof into five steps.

\smallskip
{\bf 1.} It suffices to
prove Theorem \ref{thm_general goemetric divcurl} for closed, orientable manifolds.
Indeed, by a partition of unity argument and assigning an orientation to each chart,
it suffices to prove only for orientable manifolds.
Now, consider an orientable manifold $M$ which is not necessarily compact.
Fix any test function $\psi\in C^\infty_c(M)$, and then
choose $\phi\in C^\infty_c(M)$ such that $\phi|_{{\rm supp}(\psi)}\equiv 1$.
For any differential form $\alpha$ on a compact subset of $M$,
we can choose a compact oriented submanifold $\widetilde{M}\subset M$
with $supp(\phi)$ as its interior,
and denote by $\widetilde{\alpha}=\alpha\phi$ as the extension-by-zero of $\alpha$
outside $\widetilde{M}$.
Then it suffices to prove Theorem
\ref{thm_general goemetric divcurl}
for any compact orientable manifold $\widetilde{M}$,
since, if the assertion holds for $\langle {\omega}^\epsilon, {\tau}^\epsilon\rangle$
on $\widetilde{M}$,
then the theorem also holds for $\langle \omega^\epsilon, \tau^\epsilon \rangle$
on $M$ in the distributional sense
for any test function $\psi$ with $supp(\psi)\subset \widetilde{M}$.

\smallskip
{\bf 2.}
We can now assume that $M$ is a closed orientable manifold.
Thus, Theorem \ref{thm_hodge} holds
for the Laplace-Beltrami operator $\Delta=d\circ \delta + \delta \circ d$ on $M$.
Then
\begin{equation}\label{eqn_decomp one}
\omega^\epsilon = \pi_H \omega^\epsilon + d\delta \alpha^\epsilon + \delta d\alpha^\epsilon,
\end{equation}	
\begin{equation}\label{eqn_decomp two}
\tau^\epsilon = \pi_H \tau^\epsilon + d\delta \beta^\epsilon + \delta d\beta^\epsilon,
\end{equation}
where $\alpha^\epsilon:=G(\omega^\epsilon)$ and $\beta^\epsilon:=G(\tau^\epsilon)$.
As before, $ \pi_H: \pform \mapsto \text{Har}^q (M)$ denotes the canonical projection
onto the harmonic $q$--forms, and $G$ denotes the Green operator for $\Delta$.

\smallskip
{\bf 3.}  We now analyze the mode of convergence for each term
in the Hodge decompositions of $\{\omega^\epsilon\}$ and $\{\tau^\epsilon\}$,
{\it i.e.},  Eqs. \eqref{eqn_decomp one}--\eqref{eqn_decomp two}.
We will summarize the results in the following two claims:

\begin{claim*}
In {\rm Eq.} \eqref{eqn_decomp one},
$\{ \pi_H(\omega^\epsilon)\}$ and $\{\delta d\alpha^\epsilon\}$
are strongly convergent in $L^r(M;\bigwedge^qT^*M)$,
while the second term $\{d\delta \alpha^\epsilon\}$ is weakly convergent
in $L^r(M;\ptens)$.
Moreover, $\delta\alpha^\epsilon$ is strongly convergent
in $L^{r}(M;\bigwedge^{q+1}T^*M)$.
\end{claim*}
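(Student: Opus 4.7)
The plan is to analyze each summand in the Hodge decomposition \eqref{eqn_decomp one} separately, using the commutativity of the Green operator $G$ with $d$ and $\delta$, the mapping property $G:W^{k,r}\to W^{k+2,r}$ recalled after Theorem \ref{thm_hodge}, and the finite-dimensionality of $\harmp$.

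First, because $\pi_H:L^r(M;\ptens)\to\harmp$ is a bounded linear projection onto a finite-dimensional subspace, the weak $L^r$ convergence $\omega^\epsilon\rightharpoonup\overline\omega$ in hypothesis (i) gives $\pi_H\omega^\epsilon\to\pi_H\overline\omega$ strongly in $L^r$, since weak and norm topologies agree on $\harmp$. Next, using $d\alpha^\epsilon=dG\omega^\epsilon=Gd\omega^\epsilon$ together with the boundedness $G:W^{-1,r}\to W^{1,r}$ and hypothesis (ii), the family $\{d\alpha^\epsilon\}$ is the image of a pre-compact set under a continuous linear operator, hence pre-compact in $W^{1,r}$. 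Applying $\delta$, which maps $W^{1,r}$ continuously into $L^r$, shows that $\{\delta d\alpha^\epsilon\}$ is pre-compact in $L^r$; after extracting a subsequence it converges strongly.

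The statement about the exact part $d\delta\alpha^\epsilon$ then follows immediately from the identity $d\delta\alpha^\epsilon=\omega^\epsilon-\pi_H\omega^\epsilon-\delta d\alpha^\epsilon$ read off from the decomposition: the right-hand side is a linear combination of one weakly convergent and two strongly convergent sequences in $L^r$, hence weakly convergent. Finally, for $\{\delta\alpha^\epsilon\}$: because $\{\omega^\epsilon\}$ is bounded in $L^r$ and $G:L^r\to W^{2,r}$ is bounded, $\{\alpha^\epsilon\}$ is bounded in $W^{2,r}$; applying $\delta$ gives a uniform bound on $\{\delta\alpha^\epsilon\}$ in $W^{1,r}(M;\bigwedge^{q-1}T^*M)$, and the Rellich--Kondrachov theorem on the compact manifold $M$ produces the claimed strong $L^r$ convergence along a further subsequence.

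The only delicate point I anticipate is the justification of the commutation relation $Gd=dG$ at the level of negative Sobolev spaces and the associated bound $G:W^{-1,r}\to W^{1,r}$ used in the second step; this is obtained by duality from the smooth case, exploiting the self-adjointness of $\Delta$ and the $L^r$-continuity of the Riesz-type operators on $M$. Once these mapping properties are in hand, each convergence assertion of the claim becomes a short consequence of elliptic regularity, the compactness in hypothesis (ii), and the Rellich--Kondrachov theorem.
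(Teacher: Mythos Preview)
Your proposal is correct and follows essentially the same approach as the paper: finite-dimensionality of $\harmp$ for the harmonic part, the identity $\delta d\alpha^\epsilon=\delta G(d\omega^\epsilon)$ together with the $W^{-1,r}\to W^{1,r}$ mapping property of $G$ for the coexact part, the algebraic identity $d\delta\alpha^\epsilon=\omega^\epsilon-\pi_H\omega^\epsilon-\delta d\alpha^\epsilon$ for the exact part, and Rellich--Kondrachov for $\delta\alpha^\epsilon$. Incidentally, you have the degree of $\delta\alpha^\epsilon$ right as $q-1$; the $q+1$ appearing in the claim is a typo in the paper.
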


Indeed, $\{\omega^\epsilon\}$ is weakly convergent, hence bounded in $L^r$.
On the other hand, $ \pi_H \omega^\epsilon \in \harmp$,
which is finite-dimensional, by Theorem \ref{thm_hodge}. Then
$\{\pi_H \omega^\epsilon\}$ convergent strongly.

Next, we recall that $G$ commutes with any operator commuting with $\Delta$; hence, $Gd=dG$ and $G\delta=\delta G$.
Then we have
\begin{equation}
\delta d \alpha^\epsilon = \delta \big(G (d\omega^\epsilon)\big).
\end{equation}
By assumption, $\{d\omega^\epsilon\}$ is confined in a compact set
$K_d \Subset W^{-1,r}_{\rm loc}(M;\bigwedge^{q+1}T^*M)$
so that $\{\delta d\alpha^\epsilon\}$ is strongly convergent,
by the continuity of $\delta$ and $G$.

The middle term in decomposition \eqref{eqn_decomp one} is a weakly
convergent sequence, since
$d\delta \alpha^\epsilon = \omega^\epsilon - \pi(\omega^\epsilon)-\delta d\alpha^\epsilon$.
Moreover, we notice that $\delta \alpha^\epsilon = G\delta \omega^\epsilon$,
which is bounded in $W^{1,r}(M;\bigwedge^{q+1}T^*M)$,
again by the rigidity of $\delta$ and $G$.
Therefore, by the Rellich lemma, $\{\delta\alpha^\epsilon\}$ is pre-compact in $L^r(M;\bigwedge^{q+1}T^*M)$, hence converges strongly.
This implies the claim.

Applying the similar arguments to Eq. \eqref{eqn_decomp two},
we can also verify
\begin{claim*}
In {\rm Eq.} \eqref{eqn_decomp two},
$\{\pi_H \tau^\epsilon\}$ and $\{d\delta\beta^\epsilon\}$ are strongly convergent
in $L^s(M;\bigwedge^qT^*M)$, while $\{\delta d \beta^\epsilon\}$ is weakly convergent
in $L^s(M;\bigwedge^qT^*M)$.
Moreover, $\{d\beta^\epsilon\}$ is strongly convergent
in $W^{1,s}(M;\bigwedge^{q-1}T^*M)$.
\end{claim*}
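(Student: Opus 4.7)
The plan is to prove the second claim in direct parallel with the first, exchanging the roles of $d$ and $\delta$ and of $r$ and $s$. The key tools are the Hodge decomposition \eqref{eqn_decomp two}, the commutation relations $dG=Gd$ and $\delta G=G\delta$ (stated in \S 3.2 as a consequence of the fact that $G$ commutes with every operator commuting with $\Delta$), and the continuity of $G$ as a bounded operator $W^{-1,s}(M;\ptens)\to W^{1,s}(M;\ptens)$.

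First I would dispense with $\{\pi_H\tau^\epsilon\}$: the weak convergence of $\{\tau^\epsilon\}$ in $L^s$ together with the continuity of the $L^s$-bounded projection $\pi_H$ confines the sequence to the finite-dimensional space $\harmp$ (Theorem \ref{thm_hodge}), on which weak and strong convergence coincide. For $\{d\delta\beta^\epsilon\}$, I would use commutativity to rewrite $d\delta\beta^\epsilon=dG\delta\tau^\epsilon=Gd\delta\tau^\epsilon$; by hypothesis (ii), $\{\delta\tau^\epsilon\}$ lies in a compact subset $K_\delta\Subset W^{-1,s}_{\rm loc}(M;\bigwedge^{q-1}T^*M)$, and the continuous composition $dG$ maps this into a pre-compact subset of $L^s(M;\ptens)$. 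The weak convergence $\tau^\epsilon\rightharpoonup\bar\tau$ in $L^s$ forces $\delta\tau^\epsilon\to\delta\bar\tau$ in $\mathcal{D}'$, which singles out $dG\delta\bar\tau$ as the unique accumulation point and upgrades pre-compactness to strong convergence of the whole sequence.

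The weak $L^s$-convergence of $\{\delta d\beta^\epsilon\}$ then follows immediately by subtraction from \eqref{eqn_decomp two}: since $\tau^\epsilon\rightharpoonup\bar\tau$ weakly and $\pi_H\tau^\epsilon$ and $d\delta\beta^\epsilon$ converge strongly in $L^s$, so does $\delta d\beta^\epsilon=\tau^\epsilon-\pi_H\tau^\epsilon-d\delta\beta^\epsilon$ converge weakly in $L^s$. Finally, for $\{d\beta^\epsilon\}$, I would use $d\beta^\epsilon=dG\tau^\epsilon=Gd\tau^\epsilon$: the $L^s$-boundedness of $\{\tau^\epsilon\}$ yields boundedness of $\{d\tau^\epsilon\}$ in $W^{-1,s}$, hence boundedness of $\{d\beta^\epsilon\}$ in $W^{1,s}(M;\bigwedge^{q+1}T^*M)$; the Rellich compact embedding $W^{1,s}\hookrightarrow L^s$ then gives pre-compactness in $L^s$, and uniqueness of the distributional limit identifies $Gd\bar\tau$ as the unique accumulation point.

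I anticipate no substantive obstacle: each step is the formal dual of its counterpart in the first claim. The only items to handle with care are (i) ensuring that the commutation identities for $G$ with $d$ and $\delta$ are applied at the correct Sobolev level — a direct consequence of the mapping properties of $G$ recalled after Theorem \ref{thm_hodge} — and (ii) keeping track of wedge-power indices, since $\beta^\epsilon=G\tau^\epsilon\in\Omega^q(M)$ forces $d\beta^\epsilon\in\Omega^{q+1}(M)$ and $\delta\beta^\epsilon\in\Omega^{q-1}(M)$, so that hypothesis (ii) for $\delta\tau^\epsilon$ is invoked at the correct wedge level.
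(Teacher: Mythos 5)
Your proof is correct and is essentially the paper's own argument: the paper simply mirrors the proof of the first claim (strong convergence of $\pi_H\tau^\epsilon$ from the finite-dimensionality of the harmonic space, strong convergence of $d\delta\beta^\epsilon = d(G(\delta\tau^\epsilon))$ from the compactness hypothesis on $\{\delta\tau^\epsilon\}$ together with the continuity of $d$ and $G$, weak convergence of $\delta d\beta^\epsilon$ by subtraction, and boundedness of $d\beta^\epsilon = G d\tau^\epsilon$ in $W^{1,s}$ followed by Rellich), and your extra step of identifying the limit via uniqueness of the distributional limit merely makes explicit the upgrade from pre-compactness to convergence of the full sequence. Note only that, by parallel with the first claim and with what your argument actually delivers, the last assertion should be read as ``bounded in $W^{1,s}$, hence strongly convergent in $L^s(M;\bigwedge^{q+1}T^*M)$''.
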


{\bf 4.} We are now at the stage for proving the convergence
of $\langle \omega^\epsilon,\tau^\epsilon\rangle$ in the
distributional sense.
In view of the Hodge decomposition in
Eqs. \eqref{eqn_decomp one}--\eqref{eqn_decomp two}, we have
\begin{align}\label{long eqn}
\int_M \langle\omega^\epsilon,\tau^\epsilon\rangle \psi\, {\rm d}V_g
=&\;\Big\{\int_M \langle \pi_H \omega^\epsilon, \pi_H\tau^\epsilon \rangle  \psi\,{\rm d}V_g
  + \int_M \langle   \pi_H\omega^\epsilon,  d\delta \beta^\epsilon \rangle \psi\,{\rm d}V_g \nonumber\\
 &\,\,\,\,\,+\int_M \langle  \pi_H\omega^\epsilon, \delta d\beta^\epsilon \rangle \psi\,{\rm d}V_g
  + \int_M \langle  d\delta \alpha^\epsilon, \pi_H\tau^\epsilon \rangle \psi\,{\rm d}V_g \nonumber\\
 &\,\,\,\,\,+ \int_M \langle d\delta \alpha^\epsilon, d\delta \beta^\epsilon  \rangle \psi\,{\rm d}V_g
  + \int_M \langle \delta d\alpha^\epsilon, \pi_H\tau^\epsilon \rangle\psi\,{\rm d}V_g \nonumber\\
 &\,\,\,\,\, + \int_M \langle \delta d\alpha^\epsilon,d\delta \beta^\epsilon  \rangle \psi\,{\rm d}V_g
   + \int_M \langle \delta d\alpha^\epsilon, \delta d\beta^\epsilon \rangle \psi \,{\rm d}V_g\Big\}\nonumber\\
 &\,\,\,\,\,+\int_M  \langle d\delta \alpha^\epsilon, \delta d \beta^\epsilon \rangle\psi\,{\rm d}V_g,
\end{align}
where each of the first eight terms in the bracket
on the right-hand side is the pairing of one strongly convergent
sequence and one weakly (or strongly) convergent sequence. This can be deduced immediately from the two claims in Step 3.
Thus, as $\epsilon \rightarrow 0$, these eight terms pass to the desired limits, {\em i.e.},
\begin{equation*}
\int_M \langle \pi_H\omega^\epsilon, \pi_H\tau^\epsilon \rangle  \psi\,{\rm d}V_g
\rightarrow \int_M \langle \pi_H\bar{\omega}, \pi_H\bar{\tau} \rangle  \psi\,{\rm d}V_g
\qquad\mbox{as $\e\to 0$},
\end{equation*}
and similarly for the other seven terms.

\smallskip
{\bf 5.}
To deal with the last term on the right-hand side,
which is a pairing of two weakly convergent sequences, we integrate by parts to find that
\begin{align}\label{weak weak}
\int_M \langle d\delta\alpha^\epsilon, \delta d \beta^\epsilon \rangle\psi\,{\rm d}V_g
=& \int_M d\big( \psi\delta\alpha^\epsilon\wedge\ast\delta d \beta^\epsilon\big)\,{\rm  d}V_g
+(-1)^q \int_M \big(\delta\alpha^\epsilon\wedge d(\ast\delta d\beta^\epsilon)\psi\big)\,{\rm d}V_g \nonumber\\
&+(-1)^n \int_M \big(\delta\alpha^\epsilon\wedge\ast\delta d \beta^\epsilon\wedge d\psi\big)\,{\rm d} V_g,
\end{align}
where we have used the super-commutative property of the wedge product:
\begin{equation*}
d(\omega\wedge\tau) = d\omega \wedge \tau+(-1)^{\deg(\omega)}\omega\wedge d\tau.
\end{equation*}

Now, in Eq. \eqref{weak weak}, the first term on the right-hand side vanishes by the Stokes' theorem,
and the second term vanishes because $\delta=\pm \ast d \ast$, $\ast\ast=\pm 1$, and $d\circ d=0$ ($\ast$
denotes the Hodge star). For the remaining term, notice that, in Step 3, we have proved the boundedness
of $\{\delta \alpha^\epsilon\}$ in $W^{1,r}(M;\bigwedge^{q-1}T^*M)$
so that, by the Rellich lemma, it strongly converges in $L^r(M;\bigwedge^{q-1}T^*M)$;
in addition, since $\{\delta d\beta^\epsilon\}$ is weakly convergent in $L^s(M;\ptens)$ and $d\psi\in C^\infty(M;T^*M)$,
this term is the integral of the wedge product of one strongly convergent term and one weakly convergent term.
Thus, as before, we can pass the limits to obtain that
\begin{equation*}
\int_M \langle d\delta\alpha^\epsilon, \delta d \beta^\epsilon \rangle\psi\,{\rm d}V_g
\rightarrow \int_M \langle d\delta\alpha, \delta d \beta \rangle\psi\,{\rm d}V_g
\qquad\mbox{for any $\psi \in C^\infty_c(M)$}.
\end{equation*}
Therefore, in view of the decomposition in \eqref{long eqn},
we conclude
\begin{equation*}
\int_M \langle\omega^\epsilon, \tau^\epsilon\rangle\psi\,{\rm d}V_g
\longrightarrow \int_M \langle\overline{\omega}, \overline{\tau}\rangle \psi\,{\rm d}V_g
\qquad\mbox{as $\e \to 0$}.
\end{equation*}
This completes the proof.
\end{proof}

\bigskip
\noindent
{\bf Acknowledgement}.
The authors would like to thank Professor Deane Yang for insightful discussions
on the Cartan formalism and isometric immersions, and to Professors
John Ball, Jeanne Clelland, Lawrence Craig Evans,
Marshall Slemrod, and Dehua Wang for their helpful comments and interest.
Gui-Qiang Chen's research was supported in part by
the UK
Engineering and Physical Sciences Research Council Award
EP/E035027/1 and
EP/L015811/1, and the Royal Society--Wolfson Research Merit Award (UK).
Siran Li's research was supported in part by the UK EPSRC Science and Innovation award
to the Oxford Centre for Nonlinear PDE (EP/E035027/1), and the Keble Association Study Awards (UK).

\bigskip

\end{document}